\definecolor{darkgreen}{rgb}{0,0.45,0} 
 \def\ro{\color{black}}
\def\rouge{\color{black}}
\def\red{\color{black}}
\def\blue{\color{black}}
\def\green{\color{black}}
\def\orange{\color{black}}
\newcommand{\act}{\triangleright}
\newcommand{\col}{\colon}
\newcommand{\ox}{\otimes}
\newcommand{\ospace}{ }
\newcommand{\x}{\times}
\renewcommand{\phi}{\varphi}
\newcommand{\C}{\mathcal{C}}
    \newtheorem{Convention}[theorem]{Convention}
\begin{document}
\renewcommand\thefigure{\arabic{figure}}    
\setcounter{figure}{0}   

\title{Split extensions and actions of bialgebras and Hopf algebras}

\author{Florence Sterck}

\institute{Institut de Recherche en Math\'ematique et Physique, Universit\'e catholique de Louvain, Chemin du Cyclotron 2, 1348 
Louvain-la-Neuve \and D\'epartement de Math\'ematique, Universit\'e Libre de Bruxelles, Campus de la Plaine – CP 210
Boulevard du Triomphe
1050 Bruxelles, Belgium \\
\email{florence.sterck@uclouvain.be}}

\date{}

\maketitle

\begin{abstract}
We introduce a notion of split extension of (non-associative) bialgebras which generalizes the notion of split extension of magmas introduced by M. Gran, G. Janelidze and M. Sobral. We show that this definition is equivalent to the notion of action of (non-associative) bialgebras. We particularize this equivalence to (non-associative) Hopf algebras by defining split extensions of (non-associative) Hopf algebras and proving that they are equivalent to actions of (non-associative) Hopf algebras. Moreover, we prove the validity of the Split Short Five Lemma for these kinds of split extensions, and we examine some examples.
\keywords{(non-associative) bialgebras, (non-associative) Hopf algebras, actions, split extensions, Split Short Five Lemma}
 \subclass{16T10, 16T05, 18C40, 18E99, 18M05, 17D99, 16S40} 
\end{abstract}
\section*{Declaration}

Conflicts of interest/Competing interests : NA

Availability of data and material : NA

Code availability : NA

\section*{Acknowledgements}

The author would like to warmly thank her supervisors Marino Gran and Joost Vercruysse for all the advice in the realization of this paper. Many thanks also to George Janelidze for the suggestion to explore split extensions and actions beyond the vector spaces case, in the context of monoidal categories. This led to a real improvement of this paper. The author also would like to thank Manuela Sobral for the invitation to the University of Coimbra and for the useful discussions. The
author thanks the anonymous referee for his/her useful remarks and suggestions. The author's research is supported by a FRIA (\textit{Fonds pour la formation à la recherche dans l'industrie et dans l'agriculture}) doctoral grant no. 27485 of the \textit{Communaut\'e fran\c{c}aise de Belgique}.

\section*{Introduction}
In the category of groups, split extensions have a lot of interesting properties. A split extension of groups is a diagram of the form
\begin{equation}\label{group extension}
\begin{tikzpicture}[descr/.style={fill=white},baseline=(A.base)] 
\node (A) at (0,0) {$G$};
 \node (B) at (2.5,0) {$H$};
\node (C) at (-2.5,0) {$K$};
\node (D) at (4.5,0) {$0$};
\node (E) at (-4.5,0) {$0$};
\path[->,font=\scriptsize]
(C.east) edge node[below] {$k$} (A.west)
(E.east) edge node[below] {$ $} (C.west)
(B.east) edge node[below] {$ $} (D.west)
([yshift=-4pt]A.east) edge node[below] {$f$} ([yshift=-4pt]B.west)
([yshift=2pt]B.west) edge node[above] {$s$} ([yshift=2pt]A.east);
\end{tikzpicture} 
\end{equation} where $f \cdot s = 1_H$ and $k$ is the kernel of $f$. {\blue Then $f$ is the cokernel
of $k$, so that $(k, f)$ is a short exact sequence, as the diagram indicates.} One of the interesting properties of split extensions of groups is the fact that the category of split extensions is equivalent to the category of group actions. An action of a group {\blue $(G,1)$} on a group {\blue $(X,1)$} is a map $\rho \colon G \x X \rightarrow X \colon (g,x) \rightarrow \;^gx$ such that {\red for any $g,g' \in G$ and $x,x' \in X$ the following identities hold{\blue :}}
\begin{align}
^{gg'}x &= \; ^g(^{g'}x), \label{ass1 group}\\
^{1}x &= x,\\
^g(xx') &= \;^gx^{g}x'\label{ass2 group}.
\end{align}    

In any semi-abelian category \cite{JMT} there is an equivalence between (internal) actions and split extensions \cite{BJK}, since there is a natural categorical notion of semidirect product introduced by D. Bourn and G. Janelidze in \cite{BJ}. 
Unfortunately, one can not expect this correspondence to hold in general algebraic categories: for example, in the category of monoids, split extensions and monoid actions do not form equivalent categories. Nevertheless, D. Bourn, N. Martins-Ferreira, A. Montoli and M. Sobral proved in \cite{BMS} that, in this case of monoids, there is still a restricted equivalence for the so-called ``Schreier {\blue split epimorphisms}". The terminology ``Schreier {\blue split epimorphisms}" came from a paper of A. Patchkoria \cite{Patchkoria}, who worked on a notion of a Schreier internal category in the category of monoids and proved that the category of Schreier
internal categories in the category of monoids is equivalent to the
category of crossed semimodules. A further step of generalization, of importance for this work, was considered in a recent paper by M. Gran, G. Janelidze and M. Sobral, where 
a natural notion of split extension of \emph{unitary magmas} was introduced and shown to correspond to suitable actions \cite{GJS}.
 
By a result of \cite{GSV} (see also \cite{GKV}) saying that the category of cocommutative Hopf $K$-algebras is semi-abelian, where $K$ is a field, it is known that there is an equivalence between the actions of cocommutative Hopf algebras and the split extensions of cocommutative Hopf algebras (see \cite{Molnar}, \cite{GSV}, \cite{B}, \cite{VW}). Moreover, when we consider cocommutative (non-)associative bialgebras over a symmetric monoidal category $\mathcal{C}$, they can be seen as internal monoids (or magmas) in the category of cocommutative coalgebras over $\mathcal{C}$, and then we can apply the results of \cite{GJS}.
 
 But what happens in the non-cocommutative case? 
 This paper will give an answer to this question. We define different split extensions that are equivalent to the actions of non-associative bialgebras, bialgebras, non-associative Hopf algebras and Hopf algebras in any symmetric monoidal category. This general context provides a wide variety of possible applications. {\blue In particular, the definition of split extensions of non-associative bialgebras generalizes the notion of \emph{split extensions of unitary magmas} \cite{GJS}, which was a non-associative generalization of the concept of ``Schreier split epimorphisms'' of monoids \cite{BMS}}.
 
 The first  {\blue section} of this paper is devoted to the preliminaries, where we recall the definition of bialgebras and Hopf algebras in a symmetric monoidal category.
 
 In the second {\blue section}, we define split extensions of non-associative bialgebras and show that they form a category that is equivalent to the category of actions of non-associative bialgebras. {\blue We show that these split extensions have the interesting property of being exact sequences.} We prove a variation of the Split Short Five Lemma in the category of non-associative bialgebras, when one restricts it to the split extensions that we have introduced.
 
  {\blue In the third section, we investigate the particular cases of cocommutative and associative bialgebras in a symmetric monoidal category}. In particular, in the case of cocommutative bialgebras, the equivalence between actions and split extensions gives us the results in Section 4.6 in \cite{GJS}. 
 
 The last {\blue section} {\red describes the case of Hopf algebras and provides some}  examples of split extensions of Hopf algebras. In particular, we investigate the case where a split epimorphism $\alpha \colon A \rightarrow B$ of associative Hopf algebras 
satisfies the additional condition $HKer(\alpha) = LKer(\alpha)$, which is a condition given by N. Andruskiewitsch in \cite{A} in order to define what he calls an \emph{exact extension} of Hopf algebras.

\section{Preliminaries}
We recall \cite{MacLane} that a \textit{monoidal category} is given by a triple $(\mathcal{C}, \otimes, I)$ where $\mathcal{C}$ is a category, $\otimes \colon \mathcal{C} \times \mathcal{C} \rightarrow \mathcal{C}$ a bifunctor and $I$ is the identity {\blue object} (we {\blue omit} the three natural isomorphisms, the associator, the right unit and the left unit).

A \textit{braided monoidal category} is a 4-tuple  $(\mathcal{C}, \otimes, I, \sigma)$ where  $(\mathcal{C}, \otimes, I)$ is a monoidal category and $\sigma$ is a \textit{braiding}. A braiding consists of a natural isomorphism {\ro $\sigma = (\sigma_{X,Y} \colon X \otimes Y \to Y \ox X)_{X,Y \in \mathcal{C}}$} such that for any objects $X$, $Y$ and $Z$ in $\mathcal{C}$ the following equations are satisfied \[ \sigma_{X \ox Y, Z} = (\sigma_{X,Z} \ox 1_Y)\cdot (1_X \ox \sigma_{Y,Z} ),\]
 \[ \sigma_{X , Y \ox Z} = (1_Y \ox \sigma_{X,Z} )\cdot (\sigma_{X,Y} \ox 1_Z).\]
 A braided monoidal category is called \textit{symmetric} when \[\sigma_{Y,X}^{-1} = \sigma_{X,Y}.\]

An algebra in a symmetric monoidal category $(\mathcal{C}, \otimes, I, \sigma)$ is given by an object $A \in \mathcal{C}$ endowed with a morphism $m \colon A \ox A \rightarrow A$, called the multiplication. {\blue An algebra is unital when there is a morphism $u_A \colon I \rightarrow A$ called the unit, such that the following equalities are satisfied}
\begin{equation}\label{unital multiplication}
m \cdot (u_A \ox 1_A) = 1_A = m \cdot (1_A \ox u_A),
\end{equation}
\begin{center}
\begin{tikzpicture}[descr/.style={fill=white},baseline=(A.base)] 
\node (A) at (0,0) {$ A \ox A$};
\node (B) at (2,0) {$A$};
\node (C) at (-2,0) {$A $};
\node (D) at (0,-1.5) {$ A$};
\draw[commutative diagrams/.cd, ,font=\scriptsize]
(B.south west) edge[commutative diagrams/equal] (D.north east)
(C.south east) edge[commutative diagrams/equal] (D.north west);
\path[->,font=\scriptsize]
(A.south) edge node[descr] {$m$} (D.north)
(C.east) edge node[above] {$ u_A \ox 1_A$} (A.west)
(B.west) edge node[above] {$1_A \ox u_A$} (A.east)
;
\end{tikzpicture}
\end{center}
{\blue All the algebras that we will consider in this paper are unital. However, }we do not require any associativity condition on algebras.
A morphism of algebras $f \colon A \rightarrow B$ is a morphism in $\mathcal{C}$ such that the following two diagrams commute
\begin{center}
\begin{tikzpicture}[descr/.style={fill=white},baseline=(D.base)] 
\node (A) at (0,0) {$ A $};
\node (B) at (3,0) {$B$};
\node (C) at (3,1.5) {$B \ox B $};
\node (D) at (0,1.5) {$ A \ox A$};
\path[->,font=\scriptsize]
(C.south) edge node[descr] {$m$} (B.north)
(D.south) edge node[descr] {$m$} (A.north)
(A.east) edge node[above] {$ f$} (B.west)
(D.east) edge node[above] {$ f \ox f$} (C.west)
;
\end{tikzpicture} 
\qquad
\begin{tikzpicture}[descr/.style={fill=white},baseline=(A.base)] 
\node (A) at (0,0) {$ A $};
\node (C) at (-2,0) {$I $};
\node (D) at (0,-1.5) {$ B.$};
\path[->,font=\scriptsize]
(A.south) edge node[descr] {$f $} (D.north)
(C.south east) edge node[descr] {$u_B $} (D.north west)
(C.east) edge node[above] {$ u_A $} (A.west)
;
\end{tikzpicture}
\end{center}
A coalgebra is the dual notion of the notion of an algebra. In other words, a coalgebra over $(\mathcal{C}, \otimes, I, \sigma)$ is an object $C \in \mathcal{C}$ with a comultiplication $\Delta \colon C \rightarrow C \ox C$. {\blue From now on, the coalgebras will always be coassociative, i.e. the following equality holds
\begin{equation}\label{coass comultiplication}
(\Delta \ox 1_C) \cdot \Delta = (1_C \ox \Delta) \cdot \Delta
\end{equation}
\begin{center}
\begin{tikzpicture}[descr/.style={fill=white},baseline=(D.base)] 
\node (A) at (0,0) {$ C \ox C $};
\node (B) at (3,0) {$C \ox C \ox C.$};
\node (C) at (3,1.5) {$C \ox C $};
\node (D) at (0,1.5) {$ C$};
\path[->,font=\scriptsize]
(C.south) edge node[descr] {$1_C \ox \Delta$} (B.north)
(D.south) edge node[descr] {$\Delta$} (A.north)
(A.east) edge node[above] {$ \Delta \ox 1_C$} (B.west)
(D.east) edge node[above] {$ \Delta$} (C.west)
;
\end{tikzpicture}
\end{center}
We will also assume that the coalgebras are counital, meaning that there exists a morphism $ {\blue\epsilon_C} \colon C \rightarrow I$, called counit, satisfying the condition:
\begin{equation}\label{counit comultiplication}
( {\blue\epsilon_C} \ox 1_C) \cdot \Delta = 1_C = (1_C  \ox  {\blue\epsilon_C}) \cdot \Delta,
\end{equation}
as expressed  by the commutativity of the following diagram
\begin{center}
\begin{tikzpicture}[descr/.style={fill=white},baseline=(A.base)] 
\node (A) at (0,0) {$ C \ox C$};
\node (B) at (2,0) {$C$};
\node (C) at (-2,0) {$C $};
\node (D) at (0,-1.5) {$ C$};
\draw[commutative diagrams/.cd, ,font=\scriptsize]
(B.south west) edge[commutative diagrams/equal] (D.north east)
(C.south east) edge[commutative diagrams/equal] (D.north west);
\path[->,font=\scriptsize]
(D.north) edge node[descr] {$\Delta$} (A.south)
(A.west) edge node[above] {$  {\blue\epsilon_C} \ox 1_C$} (C.east)
(A.east) edge node[above] {$1_C \ox {\blue\epsilon_C}$} (B.west)
;
\end{tikzpicture} 
\end{center}
}
Similarly, a morphism of coalgebras $g \colon C \rightarrow D$ is  a morphism in $\mathcal{C}$ such that the following two diagrams commute
\begin{center}
\begin{tikzpicture}[descr/.style={fill=white},baseline=(D.base)] 
\node (A) at (0,0) {$ D $};
\node (B) at (3,0) {$C$};
\node (C) at (3,1.5) {$C \ox C $};
\node (D) at (0,1.5) {$ D \ox D$};
\path[->,font=\scriptsize]
(B.north) edge node[descr] {$\Delta$} (C.south) 
 (A.north) edge node[descr] {$\Delta$} (D.south)
 (B.west) edge node[above] {$ g$} (A.east)
(C.west) edge node[above] {$ g\ox g$} (D.east)
;
\end{tikzpicture} 
\qquad
\begin{tikzpicture}[descr/.style={fill=white},baseline=(A.base)] 
\node (A) at (0,0) {$ D $};
\node (C) at (-2,0) {$I $};
\node (D) at (0,-1.5) {$ C.$};
\path[->,font=\scriptsize]
(D.north)edge node[descr] {$g $} (A.south) 
(D.north west) edge node[descr] {$ {\blue\epsilon_C} $} (C.south east)
(A.west) edge node[above] {$  {\blue\epsilon_D} $} (C.east)
;
\end{tikzpicture} 
\end{center}

We also recall that a bialgebra is a 5-tuple $(B,m,{\blue u_B},\Delta,{\blue \epsilon_B})$ where $(B,m,{\blue u_B})$ is an algebra, $(B,\Delta, {\blue \epsilon_B})$ is a coalgebra and $\Delta, {\blue \epsilon_B}$ are algebra morphisms (which is equivalent to ask{\ro ing} that $m$ and ${\blue u_B}$ are coalgebra morphisms) i.e.\ the following conditions hold
\begin{equation}\label{m et delta}
 \Delta \cdot m = (m \ox m) \cdot (1_B \ox {\blue \sigma_{B,B}} \ox 1_B) \cdot (\Delta \ox \Delta) ,
 \end{equation} 
 \begin{center}
\begin{tikzpicture}[descr/.style={fill=white},baseline=(D.base),xscale=1.5] 

\node (A) at (0,0) {$ B \ox B \ox B \ox B $};
\node (B) at (3.2,0) {$ B \ox B \ox B \ox B$};
\node (E) at (6,0) {$ B \ox B $};
\node (C) at (6,1.5) {$B $};
\node (D) at (0,1.5) {$ B \ox B$};
\path[->,font=\scriptsize]
(C.south)  edge node[descr] {$\Delta$} (E.north)
  (D.south) edge node[descr] {$\Delta \ox \Delta$}(A.north)
 (A.east)edge node[above] {$ 1_B \ox {\blue \sigma_{B,B}} \ox 1_B$} (B.west) 
 (B.east)edge node[above] {$ m \ox m$} (E.west) 
(D.east) edge node[above] {$ m$} (C.west)
;
\end{tikzpicture} 
\end{center}
 \begin{equation}\label{u et delta}
 \Delta \cdot u_B = u_B \ox u_B,
  \end{equation} 
   \begin{center}
  \begin{tikzpicture}[descr/.style={fill=white},baseline=(A.base)] 
\node (A) at (0,0) {$ B $};
\node (C) at (-2,0) {$I $};
\node (D) at (0,-1.5) {$ B \ox B$};
\path[->,font=\scriptsize]
(A.south) edge node[descr] {$\Delta $} (D.north)
(C.south east) edge node[descr] {$u_B \ox u_B $} (D.north west)
(C.east) edge node[above] {$ u_B $} (A.west)
;
\end{tikzpicture} 
\end{center}
 \begin{equation}\label{m et epsilon}
 {\blue \epsilon_B} \cdot m = {\blue \epsilon_B} \ox {\blue \epsilon_B},
  \end{equation} 
     \begin{center}
  \begin{tikzpicture}[descr/.style={fill=white},baseline=(A.base)] 
\node (A) at (0,0) {$ B  $};
\node (C) at (-2,0) {$B \ox B$};
\node (D) at (0,-1.5) {$ I$};
\path[->,font=\scriptsize]
(A.south) edge node[descr] {${\blue \epsilon_B} $} (D.north)
(C.south east) edge node[descr] {${\blue \epsilon_B} \ox {\blue \epsilon_B} $} (D.north west)
(C.east) edge node[above] {$ m $} (A.west)
;
\end{tikzpicture} 
\end{center}
 \begin{equation}\label{u et epsilon}
 {\blue \epsilon_B} \cdot u_B = 1_I.
\end{equation}
   \begin{center}
  \begin{tikzpicture}[descr/.style={fill=white},baseline=(A.base)] 
\node (A) at (0,0) {$ B $};
\node (C) at (-2,0) {$I $};
\node (D) at (0,-1.5) {$ I$};
\draw[commutative diagrams/.cd, ,font=\scriptsize]
(C.south east) edge[commutative diagrams/equal] (D.north west);
\path[->,font=\scriptsize]
(A.south) edge node[descr] {${\blue \epsilon_B} $} (D.north)
(C.east) edge node[above] {$ u_B $} (A.west)
;
\end{tikzpicture} 
\end{center}
Moreover, a morphism in $\mathcal{C}$ is a morphism of bialgebras if it is a morphism of algebras and coalgebras.

A non-associative Hopf algebra is a 7-tuple $(A,m,{\blue u_A},\Delta,{\blue \epsilon_A},{\rouge S_L, S_R})$ where $(A,m,u_A,\Delta,{\blue \epsilon_A})$ is a bialgebra and {\rouge $S_L$ and $S_R$ are antihomomorphisms of coalgebras and algebras, called the left and the right antipode, such that the following diagram commutes 
\begin{equation}\label{antipode}
 \begin{tikzpicture}[descr/.style={fill=white},baseline=(current  bounding  box.center),xscale=0.7] 
\node (A) at (-4,0) {$ A  $};
\node (C) at (2,0) {$I$};
\node (D) at (8,0) {$ A.$};
\node (E) at (-1.5,1.5) {$ A \ox A$ };
\node (F) at (5.5,1.5) {$ A \ox A$ };
\path[->,font=\scriptsize]
(A.east) edge node[below] {${\blue \epsilon_A }$} (C.west)
(C.east) edge node[below] {${\blue u_A }$} (D.west)
(A.north east) edge node[left,xshift=-5pt] {$ \Delta $} (E.south west)
(F.south east) edge node[right,xshift=5pt] {$ m $} (D.north west)
([yshift=-4pt]E.east) edge node[below] {$S_L \ox 1_A $} ([yshift=-4pt]F.west)
([yshift=4pt]E.east) edge node[above] {$1_A \ox S_R $} ([yshift=4pt]F.west);
\end{tikzpicture} 
\end{equation}
{\green 
A morphism of Hopf algebras is a morphism of bialgebras preserving the antipodes. Note that in the case of associative Hopf algebras, the antipode is unique  {\rouge ($S_L = S = S_R)$} and then and $S$ is automatically an antihomomorphism of coalgebras and algebras. Moreover, a bialgebra morphism between associative Hopf algebras necessarily preserves the antipode.}

\begin{example}
(1) In the symmetric monoidal category $(\mathsf{Set}, \times , \{\star\})$ of sets where $\sigma$ is the twist morphism (where $\sigma(x,y) = (y,x)$ for any element $x$ of a set $X$ and any element $y$ of a set  $Y$), every object has a coalgebra structure with $\Delta$ being the diagonal and ${\blue \epsilon_X}$ the morphism sending every element of $X$ to {\ro $\star$}. Hence, a non-associative bialgebra (or algebra) is an unital magma, an associative bialgebra (or algebra) is a monoid, an associative Hopf algebra is a group. {\blue The case of non-associative Hopf algebras in the category of sets will be treated in detail in Example \ref{exampleloop}.}

(2) In the symmetric monoidal category $({\blue {\sf Vect}_K},\ox,K)$ of vector spaces over a field $K$ where $\sigma$ is the twist morphism (defined by $\sigma(x \ox y) = y \ox x$ for any $x \ox y \in X \ox Y$), we recover the notion of $K$-algebra, $K$-coalgebra, $K$-bialgebra and Hopf $K$-algebra.

(3) In \cite{CG}, a symmetric monoidal category was introduced such that Hom-algebras, Hom-coalgebras and Hom-Hopf algebras (see \cite{MS}) coincide with the algebras, coalgebras and Hopf algebras in this symmetric monoidal category. 

(4) In \cite{CL}, the authors showed that Turaev's Hopf group-coalgebras (see \cite{Tuarev}) are Hopf algebras in a symmetric monoidal category which they called Turaev category.  

(5) Associative and non-coassociative bialgebras and Hopf algebras in any symmetric monoidal category $\mathcal{C}$ can be seen as non-associative bialgebras and Hopf algebras in $\mathcal{C}^{op}$, the opposite category, which is still a symmetric monoidal category. 

(6) The coquasi-bialgebras and quasi-bialgebras are respectively examples of non-associative bialgebras in ${\blue {\sf Vect}_K}$ and in ${\blue {\sf Vect}_K^{op}}$, see \cite{Majid3} for an introduction about these structures. The coquasi-Hopf algebras have different antipode conditions, but under some specific assumptions, it is possible to see them as non-associative Hopf algebras.  An example which is both a non-associative Hopf algebra, as we defined, and a coquasi-Hopf algebra is the structure of octonions see \cite{AM}.

\end{example}

These examples give us a glimpse of {\blue some} frameworks and cases in which the results of this paper can be applied. 

{\blue We also recall an important observation. The identity object $I$ in a symmetric monoidal category $\C$, is a Hopf algebra in $\C$. The comultiplication on $I$ is given by the natural isomorphism $\Delta \colon I \cong I\ox I$, the counit, the unit and the antipode are the identity maps. Moreover, for any Hopf algebra $A$, we have unique Hopf algebra morphisms $u_A \colon I \to A$ and $\epsilon_A : A \to I$. Hence, $I$ is the {\ro zero} object in the category of Hopf algebras in $\C$. Obviously, $I$ is also the zero object in the category
of bialgebras in $\C$.}

\begin{Convention} For the monoidal product of $n$ copies $A \ox \cdots \ox A$, the notation $A^n$ will be used. The same convention will be used for the morphisms, for example, we denote $\alpha \ox \alpha \colon A \ox A \rightarrow B \ox B$ by $\alpha^2 \colon A^2 \rightarrow B^2$. For the sake of simplicity, ``bialgebras'' will mean ``non-associative bialgebras'' (unless the associativity is explicitly mentioned). If it is not explicitly mentioned, the bialgebras and Hopf algebras are considered to be constructed in a general symmetric monoidal category $\C$.
\end{Convention}

\section{Split extensions of non-associative bialgebras}

In this section, we introduce a notion of split extension of bialgebras in a symmetric monoidal category $\C$. These extensions have several properties. Among them, the fact that they are equivalent to actions of bialgebras, a restricted version of the Split Short {\ro F}ive Lemma holds for these extensions. Moreover, a split extension of bialgebras is an exact sequence and the definition of split extension of bialgebras generalizes the one of split extension of magmas introduced in \cite{GJS}.

\begin{definition}\label{def action}
Let $X$ and $B$ be bialgebras in a symmetric monoidal category $(\mathcal{C}, \otimes, I, \sigma)$. An action {\blue of $B$ on $X$} is a morphism $\act \colon B \otimes X \rightarrow X $ {\blue in $\mathcal{C}$}, such that {\blue the diagrams}
\begin{equation}\label{conditionaction1}
 \triangleright \cdot (u_B \otimes 1_X)  = 1_X,
\end{equation} 
   \begin{center}
  \begin{tikzpicture}[descr/.style={fill=white},baseline=(A.base)] 
\node (A) at (0,0) {$ B \ox X$};
\node (C) at (-2,0) {$X $};
\node (D) at (0,-1.5) {$ X$};
\draw[commutative diagrams/.cd, ,font=\scriptsize]
(C.south east) edge[commutative diagrams/equal] (D.north west);
\path[->,font=\scriptsize]
(A.south) edge node[descr] {$\act $} (D.north)
(C.east) edge node[above] {$ u_B \ox 1_X$} (A.west)
;
\end{tikzpicture} 
\end{center}
\begin{equation}\label{condition action = e}
 \triangleright \cdot (1_B \otimes u_X) =  u_X \cdot\epsilon_B,
\end{equation} 
  \begin{center}
  \begin{tikzpicture}[descr/.style={fill=white},baseline=(A.base)] 
\node (A) at (0,0) {$ B \ox X $};
\node (C) at (-2,0) {$B  $};
\node (D) at (0,-1.5) {$ X$};
\path[->,font=\scriptsize]
(A.south) edge node[descr] {$\act $} (D.north)
(C.south) edge node[descr] {$ u_X \cdot \epsilon_B $} (D.north west)
(C.east) edge node[above] {$ 1_B \ox u_X $} (A.west)
;
\end{tikzpicture} 
\end{center}
\begin{equation}\label{coco}
(1_B \otimes \triangleright) \cdot (\Delta \otimes 1_X) =  (1_B  \otimes \triangleright) \cdot ({\blue \sigma_{B,B}} \otimes 1_X) \cdot (\Delta \otimes 1_X),
\end{equation} 
 \begin{center}
\begin{tikzpicture}[descr/.style={fill=white},baseline=(D.base),xscale=1.5] 

\node (A) at (0,0) {$ B \ox B  \ox X $};
\node (B) at (3.2,0) {$ B \ox  B \ox X$};
\node (E) at (6,0) {$ B\ox X $};
\node (C) at (6,1.5) {$B \ox B \ox X $};
\node (D) at (0,1.5) {$ B \ox X$};
\path[->,font=\scriptsize]
(C.south)  edge node[descr] {$1_B \ox \act$} (E.north)
  (D.south) edge node[descr] {$\Delta \ox 1_X$}(A.north)
 (A.east)edge node[above] {$  {\blue \sigma_{B,B}} \ox 1_X$} (B.west) 
 (B.east)edge node[above] {$ 1_B
 \ox \act$} (E.west) 
(D.east) edge node[above] {$\Delta \ox 1_X$} (C.west)
;
\end{tikzpicture} 
\end{center}
\begin{equation}\label{eps et act}
\epsilon_X \cdot \triangleright = \epsilon_B \otimes \epsilon_X,
\end{equation} 
   \begin{center}
  \begin{tikzpicture}[descr/.style={fill=white},baseline=(A.base)] 
\node (A) at (0,0) {$ X $};
\node (C) at (-2,0) {$B \ox X $};
\node (D) at (0,-1.5) {$ I$};
\path[->,font=\scriptsize]
(A.south) edge node[descr] {$\epsilon_X $} (D.north)
(C.south) edge node[descr] {$ \epsilon_B \ox \epsilon_X $} (D.north west)
(C.east) edge node[above] {$ \act $} (A.west)
;
\end{tikzpicture} 
\end{center}
\begin{equation}\label{delta et act}
\Delta \cdot  \triangleright = (\triangleright \otimes \triangleright) \cdot (1_B \otimes {\blue \sigma_{B,X}} \otimes 1_X) \cdot (\Delta \otimes \Delta).
\end{equation} 
 \begin{center}
\begin{tikzpicture}[descr/.style={fill=white},baseline=(D.base),xscale=1.5] 

\node (A) at (0,0) {$ B \ox B \ox X \ox X $};
\node (B) at (3.2,0) {$ B \ox X \ox B \ox X$};
\node (E) at (6,0) {$ X \ox X $};
\node (C) at (6,1.5) {$X $};
\node (D) at (0,1.5) {$ B \ox X$};
\path[->,font=\scriptsize]
(C.south)  edge node[descr] {$\Delta$} (E.north)
  (D.south) edge node[descr] {$\Delta \ox \Delta$}(A.north)
 (A.east)edge node[above] {$ 1_B \ox {\blue \sigma_{B,X}} \ox 1_X$} (B.west) 
 (B.east)edge node[above] {$ \act \ox \act$} (E.west) 
(D.east) edge node[above] {$ \act$} (C.west)
;
\end{tikzpicture} 
\end{center}
{\blue  in $\C$ commute.}
\end{definition}
Let us note that the last two axioms mean that $\act$ is a morphism of coalgebras. The axiom \eqref{coco} is inspired by the condition (1) that Majid used in \cite{Majid} to define a Hopf algebra crossed module. It is also what we need to define a bialgebra via the semi-direct product construction (see Theorem 3.3 in \cite{Majid2} and \cite{Molnar} for the construction of the semi-direct product also called smash-product).  In particular, we are interested in diagrams of the form 
\begin{equation}\label{semi-direct product}
\begin{tikzpicture}[descr/.style={fill=white},baseline=(A.base)] 
\node (A) at (0,0) {$X \rtimes B$};
\node (B) at (2.5,0) {$B$};
\node (C) at (-2.5,0) {$X$};
\path[dashed,->,font=\scriptsize]
([yshift=2pt]A.west) edge node[above] {$\pi_1$} ([yshift=2pt]C.east);
\path[->,font=\scriptsize]
([yshift=-4pt]C.east) edge node[below] {$i_1$} ([yshift=-4pt]A.west)
([yshift=-4pt]A.east) edge node[below] {$\pi_2$} ([yshift=-4pt]B.west)
([yshift=2pt]B.west) edge node[above] {$i_2$} ([yshift=2pt]A.east);
\end{tikzpicture}
\end{equation}
{\blue in $\C$}, where $i_1 = 1_X \otimes u_B$, $i_2 = u_X \otimes 1_B $, $\pi_1 = 1_X \otimes {\blue \epsilon_B}$, $\pi_2= {\blue \epsilon_X} \otimes 1_B$ and $X \rtimes B$ is the object $X \ox B$, where the bialgebra structure is given by the following morphisms in $\mathcal{C}$,
\begin{eqnarray*}
m_{X \rtimes B} &=& (m \otimes m) \cdot (1_X \otimes \triangleright \otimes 1_B \otimes 1_B) \cdot (1_X \otimes 1_B \otimes {\blue \sigma_{B,X}} \otimes 1_B) \cdot (1_X \otimes \Delta \otimes 1_X \otimes 1_B)\\
u_{X \rtimes B} &=&u_X \otimes u_B,\\
\Delta_{X \rtimes B} &=& (1_X \otimes {\blue \sigma_{X,B}} \otimes 1_B) \cdot (\Delta \otimes \Delta),\\
\epsilon_{X \rtimes B}&=& \epsilon_X \otimes \epsilon_B.
\end{eqnarray*}
By combining Figure \ref{sm bialgebra 1} and Figure \ref{sm bialgebra 2} in the appendix, we check that this definition provides a bialgebra structure on $X \ox B$ making the morphisms $i_1$, $i_2$ and $\pi_2$ bialgebra morphisms, {\rouge and $\pi_1$ a coalgebra morphism}.
Note that we can check that $u_X \otimes u_B$ is the neutral element for the multiplication thanks to the first two axioms \eqref{conditionaction1} and \eqref{condition action = e} of the definition of action. The comultiplication of this structure is given as the usual comultiplication of the product of two coalgebras. In particular, the coassociativity of the comultiplication of this semi-direct product is obvious. It is also interesting to remark that the associativity of this structure is not automatic, even if the bialgebras $X$ and $B$ are associative, see {\ro Example \ref{ex ass bialgebra}.}

Let us make some observations about the graph \eqref{semi-direct product}, which are analogous to the ones made in \cite{GJS}{\ro :}
\begin{lemma}\label{obs semi direct bialg}
The graph \begin{center} 
\begin{tikzpicture}[descr/.style={fill=white},baseline=(A.base)] 
\node (A) at (0,0) {$X \rtimes B$};
\node (B) at (2.5,0) {$B$};
\node (C) at (-2.5,0) {$X$};
\path[dashed,->,font=\scriptsize]
([yshift=2pt]A.west) edge node[above] {$\pi_1$} ([yshift=2pt]C.east);
\path[->,font=\scriptsize]
([yshift=-4pt]C.east) edge node[below] {$i_1$} ([yshift=-4pt]A.west)
([yshift=-4pt]A.east) edge node[below] {$\pi_2$} ([yshift=-4pt]B.west)
([yshift=2pt]B.west) edge node[above] {$i_2$} ([yshift=2pt]A.east);
\end{tikzpicture}
\end{center} as defined in \eqref{semi-direct product}, where $i_1$, $i_2$, $\pi_2$ are morphisms of bialgebras, satisfies the following properties 

\begin{itemize}
\item[(1)] $\pi_1 \cdot i_1 = 1_X$, $\pi_2 \cdot i_2 =1_B$ 
\item[(2)] $\pi_1 \cdot i_2 = u_X \cdot \epsilon_B$, $\pi_2 \cdot i_1 = u_B \cdot \epsilon_X$
\item[(3)] $m_{X \rtimes B} \cdot (i_1 \cdot \pi_1 \otimes  i_2 \cdot \pi_2) (1_X \otimes  {\blue \sigma_{X,B}} \otimes 1_B) \cdot (\Delta \otimes \Delta)= 1_X \ox 1_B$

\item[(4)] $\pi_1 \cdot m_{X \rtimes B} \cdot (i_1 \otimes i_2) = 1_X \otimes {\blue \epsilon_B} $

\item[(5)]$(1_B \otimes \pi_1) \cdot (1_B \otimes m_{X\rtimes B}) \cdot (1_B \otimes i_2 \otimes i_1)  \cdot (\Delta \otimes 1_X)= (1_B \otimes \pi_1) \cdot (1_B \otimes m_{X\rtimes B}) \cdot (1_B \otimes i_2 \otimes i_1) \cdot ({\blue \sigma_{B,B}} \otimes 1_X) \cdot (\Delta \otimes 1_X)$

\item[(6)] $m_{X \rtimes B} \cdot (1_X \otimes 1_B \otimes m_{X \rtimes B}) \cdot (i_1 \otimes i_2 \otimes 1_X \otimes 1_B) =  m_{X \rtimes B} \cdot (m_{X \rtimes B} \otimes 1_X \otimes 1_B) \cdot (i_1 \otimes i_2 \otimes 1_X \otimes 1_B)$

\item[(7)]
$ m_{X \rtimes B} \cdot (1_X \otimes 1_B \otimes m_{X \rtimes B}) \cdot (i_1 \otimes 1_X \otimes 1_B \otimes i_2) =  m_{X \rtimes B} \cdot ( m_{X \rtimes B} \otimes 1_X \otimes 1_B) \cdot (i_1 \otimes 1_X \otimes 1_B \otimes i_2) $
\item[(8)]
$ m_{X \rtimes B} \cdot (1_X \otimes 1_B \otimes m_{X \rtimes B}) \cdot (1_X \otimes 1_B \otimes i_1 \otimes i_2) =  m_{X \rtimes B} \cdot ( m_{X \rtimes B} \otimes 1_X \otimes 1_B) \cdot (1_X \otimes 1_B \otimes i_1 \otimes i_2) $
\item[(9)] $\pi_1$ is a morphism of coalgebras and $\pi_1$ preserves the unit. 

\end{itemize}

\end{lemma}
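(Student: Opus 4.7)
The plan is to verify each of the nine items by direct diagrammatic computation, exploiting the fact that the morphisms $i_1, i_2, \pi_1, \pi_2$ are built from $u_X, u_B, \epsilon_X, \epsilon_B$ and identities, together with the action axioms \eqref{conditionaction1}--\eqref{delta et act} and the bialgebra axioms of $X$ and $B$.

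Items (1) and (2) reduce immediately to the counit axiom $\epsilon \cdot u = 1_I$ for $X$ and $B$: e.g.\ $\pi_1 \cdot i_1 = (1_X \ox \epsilon_B) \cdot (1_X \ox u_B) = 1_X \ox (\epsilon_B \cdot u_B) = 1_X$, while $\pi_1 \cdot i_2 = (1_X \ox \epsilon_B) \cdot (u_X \ox 1_B) = u_X \ox \epsilon_B$, which equals $u_X \cdot \epsilon_B$ after using the canonical isomorphism $I \cong I$. Item (9) is of the same flavour: since $\Delta_{X\rtimes B} = (1_X \ox \sigma_{X,B} \ox 1_B) \cdot (\Delta_X \ox \Delta_B)$ and $\epsilon_{X\rtimes B} = \epsilon_X \ox \epsilon_B$, the fact that $\epsilon_B$ is a coalgebra morphism makes $\pi_1 = 1_X \ox \epsilon_B$ automatically a coalgebra morphism; and $\pi_1 \cdot u_{X\rtimes B} = (1_X \ox \epsilon_B)\cdot(u_X \ox u_B) = u_X$ by \eqref{u et epsilon}.

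For (3) and (4), I would substitute the explicit formula for $m_{X\rtimes B}$ and then push the units $u_X$ and $u_B$ supplied by $i_1$, $i_2$ through the diagram: the action axioms \eqref{conditionaction1} and \eqref{condition action = e} collapse the factor $\act$, while the unit axiom \eqref{unital multiplication} on $X$ and $B$ absorbs the remaining $u_X$ and $u_B$. In (3) one recovers $1_{X\ox B}$ after using also \eqref{u et delta} and the counit axiom on the $\Delta$'s, and in (4) the $\epsilon_B$ at the end of $\pi_1$ kills the $B$-component. Item (5) is where condition \eqref{coco} of the action (the ``cocommutativity on the $B$-side'' axiom) is precisely tailored to give the equality: after expanding $m_{X\rtimes B} \cdot (1_B \ox i_2 \ox i_1)$, the terms containing $\act$ become a single instance of $(1_B \ox \act) \cdot (\Delta \ox 1_X)$, and the symmetry induced by $\sigma_{B,B}$ in \eqref{coco} yields the desired identity.

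Items (6), (7), (8) are the three ``partial associativity'' conditions and form the main obstacle, since the multiplication $m_{X\rtimes B}$ is not assumed associative in general. The strategy is the same for each: plug in the explicit formula for $m_{X \rtimes B}$ on both sides, then use the unital axiom to erase the units coming from $i_1$ and $i_2$ at the appropriate tensor slots, so that most occurrences of $\act$, $m_X$, $m_B$ reduce to identities. What remains on both sides should be the same residual diagram, purely in terms of $m_X$ on $X$-entries and $m_B$ on $B$-entries, which then match trivially. The bookkeeping of the many tensor factors and braidings $\sigma_{B,X}, \sigma_{X,B}$ is the delicate part, but no deep property of $\act$ beyond \eqref{conditionaction1} and \eqref{condition action = e} is needed, because in each of (6)--(8) two of the four factors are constants from $I$. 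For clarity I would present these computations in string-diagram form (as in the appendix figures already referenced), so that the cancellations coming from unit-counit axioms are visually evident.
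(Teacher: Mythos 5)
Your proposal is correct and follows essentially the same route as the paper: items (1), (2) and (9) by the unit/counit axioms, items (3) and (4) by pushing the units supplied by $i_1,i_2$ through $m_{X\rtimes B}$ using \eqref{conditionaction1}, \eqref{condition action = e}, \eqref{unital multiplication}, \eqref{u et delta} and \eqref{counit comultiplication}, item (5) by reducing both sides to $(1_B\ox\act)\cdot(\Delta\ox 1_X)$ and invoking \eqref{coco}, and items (6)--(8) by unit cancellations alone. The paper carries out exactly these computations as explicit commutative diagrams (Figure \ref{Condition (6)} and the in-text diagrams), so the only difference is presentational.
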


\begin{proof}
The properties (1) and (2) are trivial. The condition (3) is proven via the commutativity of the following diagram.
\begin{center} 
\begin{tikzpicture}[descr/.style={fill=white}] 
\node (A) at (-4,0) {$X \ox B$};
\node (B) at (3,0) {$X^2 \ox B^2$};
\node (A') at (-4,-1.5) {$X \ox B$};
\node (B') at (3,0-1.5) {$X \ox B$};
\node (C) at (3,1.5) {$X\ox B \ox X \ox B^2$};
\node (D) at (3,3) {$X \ox B^2  \ox X \ox B$};
\node (E) at (3,4.5) {$(X \ox B)^2$};
\node (F) at (3,6) {$X \ox B $};
\node (G) at (3,7.5) {$(X \ox B)^2$};
\node (H) at (-1,7.5) {$X^2 \ox B^2$};
\node (I) at (-4,7.5) {$X \ox B$};
\node (J) at (-4,3) {$X \ox B$};
\node (X) at (-4,6) {$X \ox B$};
\node (Y) at (-4,1.5) {$X \ox B$};
\node  at (-0.5,0.75) {$\eqref{conditionaction1}$};
\node  at (-0.5,4.5) {$\eqref{u et delta}$};
\node at (-0.5,-0.75) {$\eqref{unital multiplication}$};
\node at (-1.5,6.75) {$\eqref{counit comultiplication}$};

\draw[commutative diagrams/.cd, ,font=\scriptsize]
(I.south) edge[commutative diagrams/equal]  (X.north)
(A.south) edge[commutative diagrams/equal]  (A'.north)
(X.south) edge[commutative diagrams/equal]  (J.north)
(J.south) edge[commutative diagrams/equal] (Y.north)
(Y.south) edge[commutative diagrams/equal]  (A.north)
(A'.east) edge[commutative diagrams/equal]  (B'.west)
(X.east) edge[commutative diagrams/equal]  (F.west);

\path[->,font=\scriptsize]
(H.south east) edge node[descr] {$1_X  \ox {\blue \epsilon_X} \ox {\blue \epsilon_B} \ox 1_B \; \; \; \; \; \; \; \; \; \; \; \; \; \; \; \; \; \; \;$} (F.north west)
(J.east) edge node[above] {$1_X  \ox u_B \ox u_B \ox u_X \ox 1_B$} (D.west)
(Y.east) edge node[above] {$1_X  \ox u_B \ox u_X \ox u_B \ox 1_B$} (C.west)
(A.east) edge node[descr] {$1_X  \ox u_X \ox u_B \ox 1_B$} (B.west)
(H.east) edge node[above] {$1_X  \ox {\blue \sigma_{X,B}} \ox 1_B$} (G.west)
(I.east) edge node[above] {$\Delta \ox \Delta$} (H.west)
(C.south) edge node[descr] {$1_X \ox \act \ox 1_B \ox 1_B$} (B.north)
(B.south) edge node[descr] {$m \ox m$} (B'.north)
(D.south) edge node[descr] {$1_X \ox 1_B \ox {\blue \sigma_{B,X}} \ox 1_B$} (C.north)
(E.south) edge node[descr] {$1_X \ox \Delta \ox 1_X \ox 1_B$} (D.north)
(F.south) edge node[descr] {$1_X \ox u_B \ox u_X \ox 1_B \; \; \; \; \; \; \; \; $} (E.north)
(G.south) edge node[descr] {$1_X \ox {\blue \epsilon_B} \ox {\blue \epsilon_X} \ox 1_B$} (F.north);
\draw[->] (G.south east) to[bend left=55]node[right,scale=0.8] {${\blue (i_1 \cdot \pi_1) \otimes  (i_2 \cdot \pi_2)}$} (E.north east);
\draw[->] (E.south east) to[bend left=55]node[right,scale=0.8] {$ {\blue m_{X \rtimes B}}$} (B'.north east);
\end{tikzpicture} 
\end{center}
The equality (4) holds since this diagram commutes
\begin{center}
\begin{tikzpicture}[descr/.style={fill=white},yscale=0.9,baseline=(A.base),scale=0.75] 
\node (A) at (-5,0) {$X$};
\node (B) at (4,0) {$X \ox B.$};
\node (C) at (4,2) {$X^2 \ox B^2$};
\node (D) at (4,4) {$X \ox B \ox X \ox B^2$};
\node (E) at (4,6) {$X \ox B^2 \ox X \ox B$};
\node (F) at (4,8) {$(X \ox B)^2$};
\node (G) at (-5,8) {$X \ox B $};
\node (H) at (-5,2) {$X \ox B$};
\node (X) at (-1,4) {$\eqref{conditionaction1}$};
\node (X) at (-1,7) {$\eqref{u et delta}$};
\node at (2,1.2) {$\eqref{unital multiplication}$};
\draw[->] (F.south east) to[bend left=40]node[right,scale=0.8] {${\blue m_{X \rtimes B}}$} (B.north east);
\draw[commutative diagrams/.cd, ,font=\scriptsize]
(G.south) edge[commutative diagrams/equal]  (H.north)
(H.south east) edge[commutative diagrams/equal]  (B.north west);
\path[->,font=\scriptsize]
(G.south east) edge node[descr] {$1_X  \ox u_B \ox u_X \ox u_B \ox 1_B$} (D.north west)
(B.west) edge node[below] {$1_X  \ox {\blue \epsilon_B}$} (A.east)
(H.east) edge node[above] {$1_X  \ox u_X \ox u_B \ox 1_B$} (C.west)
(G.east) edge node[above] {$1_X  \ox u_B \ox u_X \ox 1_B$} (F.west)
(F.south) edge node[descr] {$1_X \ox \Delta \ox 1_X \ox 1_B$} (E.north)
(E.south) edge node[descr] {$1_X \ox 1_B \ox {\blue \sigma_{B,X}} \ox 1_B$} (D.north)
(D.south) edge node[descr] {$1_X \ox \act \ox (1_B)^2$} (C.north)
(C.south) edge node[descr] {$m  \ox m$} (B.north)
(H.south) edge node[descr] {$1_X \ox {\blue \epsilon_B}$} (A.north);
\end{tikzpicture} 
\end{center}

The property (5) is due to \eqref{coco} and the commutativity of the squares denoted $(A)$ as we can see in the following commutative diagram
\begin{center}
\begin{tikzpicture}[descr/.style={fill=white},yscale=0.8] 
\node (A) at (0,-3) {$ B \ox (X \ox B)^2  $};
\node (B) at (4,-3) {$B \ox X \ox B.$};
\node (C) at (4,0) {$B \ox X$};
\node (D) at (8,0) {$ B \ox X \ox B$};
\node (E) at (8,3) {$ B \ox (X \ox B)^2$};
\node (F) at (4,3) {$B \ox B  \ox X$};
\node (G) at (0,3) {$B \ox X$};
\node (H) at (0,1.5) {$B \ox B \ox X$};
\node (I) at (0,0) {$B \ox B \ox X$};
\node at (2,1.5) {$\eqref{coco}$};
\node at (6,1.5) {$(A)$};
\node at (2,-1.5) {$(A)$};
\path[->,font=\scriptsize]
(E.south) edge node[descr] {$1_B \ox m_{X \rtimes B} $} (D.north)
(D.west) edge node[above] {$1_B \ox 1_X \ox {\blue \epsilon_B}$} (C.east)
(B.north) edge node[descr] {$1_B \ox 1_X \ox {\blue \epsilon_B}$} (C.south)
(G.south) edge node[descr] {$\Delta \ox 1_X$} (H.north)
(H.south) edge node[descr] {${\blue \sigma_{B,B}} \ox 1_X$} (I.north)
(F.east) edge node[above] {$ 1_B \ox i_2 \ox i_1$} (E.west)
(I.south) edge node[descr] {$ 1_B \ox i_2 \ox i_1$} (A.north)
(A.east) edge node[above] {$1_B \ox  m_{X\rtimes B}$} (B.west)
(G.east) edge node[above] {$\Delta  \ox 1_X$} (F.west)
(F.south) edge node[descr] {$1_B \ox \act$} (C.north)
(I.east) edge node[above] {$1_B \ox \act$} (C.west)
;
\end{tikzpicture} 
\end{center}
{\red We observe that} the squares {\blue $(A)$} commute thanks to the unitality of the multiplication and the counitality of the comultiplication, as expressed with commutativity of this diagram  
\begin{center}
\begin{tikzpicture}[descr/.style={fill=white},yscale=0.6,xscale=0.75] 
\node (A) at (-6,-3) {$B \ox X$};
\node (B) at (10,-3) {$B \ox X \ox B.$};
\node (C) at (10,0) {$B \ox X^2 \ox B^2 $};
\node (D) at (10,3) {$ (B \ox X)^2 \ox B^2$};
\node (E) at (10,6) {$ B \ox X \ox B^2 \ox X \ox B$};
\node (F) at (2,6) {$B \ox (X \ox B)^2 $};
\node (G) at (-6,6) {$B^2 \ox X$};
\node (H) at (-6,3) {$B^2 \ox X$};
\node (I) at (-2,3) {$B^3 \ox X$};
\node (J) at (-2,0) {$B^2 \ox X \ox B $};
\node (K) at (2.5,0) {$B \ox X  \ox B$};
\node at (0,-1.5) {$\eqref{counit comultiplication}$};
\node at (7,-1) {$\eqref{unital multiplication}$};
\draw[commutative diagrams/.cd, ,font=\scriptsize]
(K.south) edge[commutative diagrams/equal]  (B.north west)
(G.south) edge[commutative diagrams/equal]  (H.north);
\path[->,font=\scriptsize]
(E.south) edge node[descr] {$1_B \ox 1_X \ox 1_B \ox {\blue \sigma_{B,X}} \ox 1_B$} (D.north)
(D.south) edge node[descr] {$1_B \ox 1_X \ox \act \ox 1_B \ox 1_B$} (C.north)
(C.south) edge node[descr] {$1_B \ox m \ox m$} (B.north)
(B.west) edge node[above] {$1_B \ox 1_X \ox {\blue \epsilon_B}$} (A.east)
(G.east) edge node[above] {$1_B \ox u_X \ox 1_B \ox 1_X \ox u_B$} (F.west)
(K.east) edge node[above] {$1_B \ox u_X \ox 1_X \ox 1_B \ox u_B$} (C.west)
(F.east) edge node[above] {$1_B \ox 1_X \ox \Delta \ox 1_X \ox 1_B$} (E.west)
(H.east) edge node[above] {$1_B \ox \Delta \ox 1_X $} (I.west)
(I.south) edge node[descr] {$1_B \ox 1_B \ox {\blue \sigma_{B,X}}$} (J.north)
(J.east) edge node[above] {$ 1_B \ox \act \ox 1_B$} (K.west)
(H.south) edge node[descr] {$1_B \ox \act$} (A.north)
;
\end{tikzpicture} 
\end{center}

We prove the ``partial associativity" condition (6) thanks to Figure \ref{Condition (6)}.
\begin{figure}[b]
\centering
\begin{tikzpicture}[descr/.style={fill=white},xscale=0.88,yscale=0.9] 
\node (A1) at (-1,4) {$(X \ox B)^3 \ox B$};
\node (A2) at (-1,2) {$X \ox B \ox X^2 \ox B^2$};
\node (A3) at (-1,0) {$(X \ox B)^2$};
\node (A4) at (5,0) {$X \ox B^2 \ox X \ox B$};
\node (A5) at (12.5,0) {$(X \ox B)^2 \ox B$};
\node (A6) at (12.5,2) {$X^2 \ox B^2$};
\node (B1) at (-1,6) {$(X \ox B)^2 \ox B \ox X \ox B$};
\node (B2) at (7.5,2) {$ $};
\node (B5) at (7.5,2) {$X^2 \ox B^2$};
\node (B6) at (12.5,4) {$X \ox B$};
\node (C2) at (7.5,4) {$X \ox B \ox X \ox B^2$};
\node (D1) at (-1,8) {$(X \ox B)^3$};
\node (D2) at (7.5,6) {$X \ox B^2 \ox X \ox B$};
\node (E1) at (-1,10) {$(X \ox B)^2$};
\node (E2) at (7.5,8) {$(X \ox B)^2$};
\node (E6) at (12.5,8) {$(X \ox B)^2$};
\node (F1) at (-1,14) {$(X \ox B)^2$};
\node (F2) at (5.5,14) {$(X \ox B)^3$};
\node (F3) at (12.5,14) {$X \ox B^2 \ox (X \ox B)^2$};
\node (F4) at (12.5,12) {$(X \ox B)^2 \ox  B \ox X \ox B$};
\node (F6) at (12.5,10) {$X^2 \ox B^2 \ox X \ox B$};
\node at (10.5,2) {$\eqref{conditionaction1}$};
\node at (4,1) {$\eqref{unital multiplication}+\eqref{u et delta}$};
\node at (5,11) {$\eqref{conditionaction1}$};
\draw[commutative diagrams/.cd, ,font=\scriptsize]
(F1.south) edge[commutative diagrams/equal]  (E1.north)
(E1.east) edge[commutative diagrams/equal]  (E2.west)
(E2.east) edge[commutative diagrams/equal]  (E6.west)
;
\path[->,font=\scriptsize]
(B5.south) edge node[descr] {$1_X \ox u_B \ox 1_X \ox u_B \ox m \; \; \; \; \; \; \; \;$} (A5.north west)
(F1.south east) edge node[descr] {$ 1_X \ox u_B \ox u_X \ox u_B \ox 1_B \ox 1_X \ox 1_B$} (F4.north west)
(A1.south) edge node[descr,xshift=25pt] {$1_X \ox 1_B \ox 1_X \ox  \act \ox 1_B \ox 1_B $} (A2.north)
(A2.south) edge node[descr,xshift=15pt] {$1_X \ox 1_B \ox m \ox m$} (A3.north)
(A3.east) edge node[above] {$1_X \ox \Delta \ox 1_X \ox 1_B$} (A4.west)
(A4.east) edge node[above] {$1_X \ox 1_B \ox {\blue \sigma_{B,X}} \ox 1_B$} (A5.west)
(A5.north) edge node[descr] {$1_X \ox \act \ox 1_B \ox 1_B$} (A6.south)
(B1.south) edge node[descr,xshift=25pt] {$1_X \ox 1_B \ox 1_X \ox 1_B \ox {\blue \sigma_{B,X}} \ox 1_B $} (A1.north)
(B5.west) edge node[above] {$\; 1_X \ox u_B \ox u_X \ox 1_X \ox 1_B \ox 1_B$} (A2.east)
(A6.north) edge node[descr] {$ m \ox m$} (B6.south)
(B5.north east) edge node[descr] {$m \ox m$} (B6.west)
(C2.south) edge node[descr] {$1_X \ox \act \ox 1_B \ox 1_B$} (B2.north)
(D2.south) edge node[descr] {$1_X \ox 1_B \ox {\blue \sigma_{B,X}} \ox 1_B$} (C2.north)
(E2.south) edge node[descr] {$1_X \ox \Delta \ox 1_X \ox 1_B$} (D2.north)
(E1.south) edge node[descr,xshift=25pt] {$1_X \ox u_B \ox u_X \ox 1_B \ox 1_X \ox 1_B$} (D1.north)
(D1.south) edge node[descr,xshift=25pt] {$1_X \ox 1_B \ox 1_X \ox \Delta \ox 1_X \ox 1_B$} (B1.north)
(E6.south) edge node[descr] {$m_{X \rtimes B}$} (B6.north)
(F6.south) edge node[descr] {$m \ox m \ox 1_X \ox 1_B$} (E6.north)
(F4.south) edge node[descr,xshift=-15pt] {$1_X \ox \act \ox 1_B \ox 1_B \ox 1_X \ox 1_B $} (F6.north)
(F2.east) edge node[above] {$1_X \ox \Delta \ox 1_X \ox 1_B \ox 1_X \ox 1_B $} (F3.west)
(F3.south) edge node[descr,xshift=-15pt] {$1_X  \ox 1_B \ox {\blue \sigma_{B,X}} \ox 1_B \ox 1_X \ox 1_B $} (F4.north)
(F1.east) edge node[above] {$1_X \ox u_B \ox u_X \ox 1_B \ox 1_X \ox 1_B $} (F2.west)
;
\end{tikzpicture} 
\caption{Condition (6)}
\label{Condition (6)}
\end{figure}
With similar computations we can show the ``partial associativities" (7) and (8), moreover (9) is clear.
\qed
 \end{proof}

We define a split extension of bialgebras by taking inspiration of the above Lemma.

\begin{definition}\label{definition split extension}
A split extension of bialgebras {\blue in $\C$} is given by a diagram {\blue in $\C$} \begin{equation}\label{split extension}
\begin{tikzpicture}[descr/.style={fill=white},baseline=(A.base)] 
\node (A) at (0,0) {$A$};
\node (B) at (2.5,0) {$B,$};
\node (C) at (-2.5,0) {$X$};
\path[dashed,->,font=\scriptsize]
([yshift=2pt]A.west) edge node[above] {$\lambda$} ([yshift=2pt]C.east);
\path[->,font=\scriptsize]
([yshift=-4pt]C.east) edge node[below] {$\kappa$} ([yshift=-4pt]A.west)
([yshift=-4pt]A.east) edge node[below] {$\alpha$} ([yshift=-4pt]B.west)
([yshift=2pt]B.west) edge node[above] {$e$} ([yshift=2pt]A.east);
\end{tikzpicture} 
\end{equation}
where $X$, $A$, $B$ are bialgebras, $\kappa$, $\alpha$, $e$ are morphisms of bialgebras, such that
\begin{itemize}
\item[(1)] $\lambda \cdot \kappa = 1_X$, $\alpha \cdot e =1_B$ 
\item[(2)] $\lambda \cdot e = u_X\cdot \epsilon_B$, $\alpha \cdot \kappa = u_B \cdot\epsilon_X$
\item[(3)] $m \cdot ((\kappa\cdot \lambda) \otimes (e \cdot \alpha)) \cdot \Delta = 1_A$
\item[(4)]$ \lambda \cdot m \cdot (\kappa \otimes e) =  1_X \otimes {\blue \epsilon_B}$
\item[(5)] $(1_B \otimes \lambda) \cdot (1_B \otimes m) \cdot ( 1_B \otimes e\otimes \kappa)  \cdot (\Delta \otimes 1_X)= (1_B \otimes \lambda) \cdot (1_B \otimes m) \cdot ( 1_B \otimes e\otimes \kappa) \cdot ({\blue \sigma_{B,B}} \otimes 1_X) \cdot (\Delta \otimes 1_X)$
\item[(6)] 
$m \cdot( m \otimes 1_A) \cdot (\kappa \otimes e \otimes 1_A) = m \cdot( 1_A \otimes m) \cdot (\kappa \otimes e \otimes 1_A)$
\item[(7)]
$m \cdot( m \otimes 1_A) \cdot (\kappa \otimes 1_A \otimes e) = m \cdot( 1_A \otimes m) \cdot (\kappa \otimes 1_A \otimes e)$ 
\item[(8)]
$m \cdot( m \otimes 1_A) \cdot (1_A \otimes \kappa \otimes e) = m \cdot( 1_A \otimes m) \cdot (1_A \otimes \kappa \otimes e)$
\item[(9)] $\lambda$ is a morphism of coalgebras preserving the unit.
\end{itemize} 
\end{definition}
\begin{remark}\label{useless axioms}We notice that the conditions $\lambda \cdot \kappa = 1_X$, $\lambda \cdot e = u_X \cdot\epsilon_B$ and $\lambda$ preserving the unit are consequences of the axiom $(4)$. The condition $(7)$ follows from $(3)$, $(6)$ and $(8)$, as we show in the following diagrams, where we use that $e$ and $\kappa$ are (bi)algebra morphisms. \begin{center}
\begin{tikzpicture}[descr/.style={fill=white},
yscale=0.95,baseline=(A.base),scale=0.95] 
\node (A1) at (2.5,0) {$ A^4$};
\node (A2) at (5,0) {$ A^3$};
\node (B0) at (0,1.5) {$ X^2 \ox B^2$};
\node (B1) at (2.5,1.5) {$ A^4$};
\node (B2) at (5,1.5) {$ A^3$};
\node (B3) at (7.5,1.5) {$ A^2$};
\node (C1) at (2.5,3) {$ X^2 \ox B$};
\node (C2) at (5,3) {$ A^3$};
\node (C3) at (7.5,3) {$ A^2$};
\node (C4) at (10,3) {$ A$};
\node (D1) at (2.5,4.5) {$ X \ox B^2$};
\node (D2) at (5,4.5) {$ A^3$};
\node (D3) at (7.5,4.5) {$ A^2$};
\node (D4) at (10,4.5) {$ A$};
\node (E0) at (0,6) {$ X^2 \ox B^2$};
\node (E1) at (2.5,6) {$ A^4$};
\node (E2) at (5,6) {$ A^3$};
\node (E3) at (7.5,6) {$ A^2$};
\node (F1) at (2.5,7.5) {$ A^4$};
\node (F2) at (5,7.5) {$ A^3$};
\node at (3.75,0.75) {$(6)$};
\node at (3.75,6.75) {$(8)$};
\node at (6.25,2.25) {$(8)$};
\node at (6.25,5.25) {$(6)$};
\draw[commutative diagrams/.cd, ,font=\scriptsize]
(E0.south) edge[commutative diagrams/equal]  (B0.north)
(D3.south) edge[commutative diagrams/equal]  (C3.north)
(D4.south) edge[commutative diagrams/equal]  (C4.north);
\path[->,font=\scriptsize]
(B0.south) edge node[descr] {$\kappa^2 \ox e^2$} (A1.north west)
(A1.east) edge node[above] {$ 1_A \ox m \ox 1_A $} (A2.west)
(B0.east) edge node[above] {$ \kappa^2 \ox e^2 $} (B1.west)
(B1.east) edge node[above] {$ 1_A \ox 1_A \ox m $} (B2.west)
(B2.east) edge node[above] {$  1_A \ox m $} (B3.west)
(C1.east) edge node[above] {$ \kappa^2 \ox e $} (C2.west)
(C2.east) edge node[above] {$ m \ox 1_A$} (C3.west)
(C3.east) edge node[above] {$ m $} (C4.west)
(D3.east) edge node[above] {$ m $} (D4.west)
(E0.east) edge node[above] {$ \kappa^2 \ox e^2 $} (E1.west)
(E1.east) edge node[above] {$ m \ox 1_A \ox 1_A  $} (E2.west)
(E2.east) edge node[above] {$  m \ox 1_A $} (E3.west)
(F1.east) edge node[above] {$ 1_A \ox m \ox 1_A $} (F2.west)
(D1.east) edge node[above] {$ \kappa \ox e^2 $} (D2.west)
(D2.east) edge node[above] {$ 1_A \ox m$} (D3.west)
(E0.south east) edge node[descr] {$m \ox 1_B \ox 1_B$} (D1.north west)
(C1.south east) edge node[descr] {$\kappa^2 \ox e$} (B2.north west)
(F2.south east) edge node[descr] {$m \ox 1_A$} (E3.north west)
(E3.south east) edge node[descr] {$m$} (D4.north west)
(B3.north east) edge node[descr] {$m$} (C4.south west)
(A2.north east) edge node[descr] {$1_A \ox m$} (B3.south west)
(B0.north east) edge node[descr] {$1_X \ox 1_X \ox m$} (C1.south west)
(D1.north east) edge node[descr] {$\kappa \ox e^2$} (E2.south west)
(E0.north) edge node[descr] {$\kappa^2 \ox e^2$} (F1.south west)
;
\end{tikzpicture} .
\end{center} Hence, by pre-composing with $(1_X \ox \lambda \ox \alpha \ox 1_B) \cdot (1_X \ox \Delta \ox 1_B)$ and using the condition (3), we obtain the following diagram
\begin{center}
\begin{tikzpicture}[descr/.style={fill=white},baseline=(A0.base),
yscale=0.6,xscale=0.9] 
\node (A0) at (0,0) {$ X \ox A \ox B$};
\node (A1) at (4,0) {$ A^3$};
\node (A2) at (8,0) {$ A^2$};
\node (A3) at (12,0) {$ A.$};
\node (B0) at (4,4) {$ X \ox A^2 \ox B$};
\node (B3) at (12,4) {$ A^2$};
\node (C3) at (12,8) {$ A^3$};
\node (D0) at (0,12) {$ X \ox A \ox B$};
\node (D1) at (2,10) {$ X \ox A^2 \ox B$};
\node (D2) at (9,8) {$ X \ox A^2 \ox B$};
\node (D3) at (12,12) {$ X \ox A \ox B$};
\node (E) at (4,8) {$ X^2 \ox B^2$};
\node  at (1.5,6) {$(3)$};
\node  at (6,10) {$(3)$};
\draw[commutative diagrams/.cd, ,font=\scriptsize]
(D0.east) edge[commutative diagrams/equal]  (D3.west)
(D0.south) edge[commutative diagrams/equal]  (A0.north);
\path[->,font=\scriptsize]
(A0.east) edge node[above] {$ \kappa \ox 1_A \ox e$} (A1.west)
(A1.east) edge node[above] {$ 1_A \ox m  $} (A2.west)
(A2.east) edge node[above] {$ m  $} (A3.west)
(E.east) edge node[above] {$ 1_X \ox \kappa \ox e  \ox 1_B $} (D2.west)
(D2.north) edge node[descr] {$ 1_X \ox m \ox 1_B $} (D3.south west)
(D0.south east) edge node[descr] {$ 1_X \ox \Delta \ox 1_B $} (D1.north)
(D1.south east) edge node[descr] {$ 1_X \ox \lambda \ox \alpha \ox 1_B $} (E.north)
(E.south) edge node[descr] {$ 1_X \ox \kappa \ox e  \ox 1_B $} (B0.north)
(B0.south) edge node[descr] {$ 1_X \ox m \ox 1_B $} (A0.north east)
(D3.south) edge node[descr] {$  \kappa \ox 1_A \ox e $} (C3.north)
(C3.south) edge node[descr] {$ m \ox 1_A$} (B3.north)
(B3.south) edge node[descr] {$ m $} (A3.north)
;
\end{tikzpicture}
\end{center}Then, we conclude that condition (7) holds. 

{\blue In the associative case, the conditions $(6)$, $(7)$ and $(8)$ in Definition \ref{definition split extension} trivially become redundant.
Note that these ``partial associativity conditions'' are versions of the conditions considered in the article \cite{GJS}. More precisely, this definition generalizes the notion of split extension of unitary magmas in \cite{GJS}. Indeed, by taking $\C = \sf Set$, we obtain exactly the definition introduced in \cite{GJS}.  }

\end{remark}
{\blue 
\begin{proposition}
If there exists a morphism $\lambda$ satisfying the conditions of Definition \ref{definition split extension}, it has to be unique. 
\end{proposition}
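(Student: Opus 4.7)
The plan is to show that if $\lambda$ and $\lambda'$ both satisfy the axioms of Definition \ref{definition split extension}, then $\lambda=\lambda'$, by exploiting the interplay between the ``decomposition'' axiom (3) and the ``contracting'' axiom (4).

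The starting observation is that axiom (3), written for $\lambda'$, says
\[
1_A \;=\; m\cdot(\kappa\otimes e)\cdot(\lambda'\otimes\alpha)\cdot\Delta.
\]
This is the crucial identity: it tells us that any morphism out of $A$ is entirely determined by its restrictions through $\kappa\cdot\lambda'$ and $e\cdot\alpha$. So I would begin by pre-composing $\lambda$ with this expression, giving
\[
\lambda \;=\; \lambda\cdot m\cdot(\kappa\otimes e)\cdot(\lambda'\otimes\alpha)\cdot\Delta.
\]

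Now I would apply axiom (4) for $\lambda$, namely $\lambda\cdot m\cdot(\kappa\otimes e)= 1_X\otimes\epsilon_B$, to collapse the leftmost three factors. This yields
\[
\lambda \;=\; (1_X\otimes\epsilon_B)\cdot(\lambda'\otimes\alpha)\cdot\Delta \;=\; (\lambda'\otimes(\epsilon_B\cdot\alpha))\cdot\Delta.
\]
Since $\alpha$ is a bialgebra morphism, it preserves counits, so $\epsilon_B\cdot\alpha=\epsilon_A$. The identity therefore simplifies to $\lambda=(\lambda'\otimes\epsilon_A)\cdot\Delta$, and by the counit axiom \eqref{counit comultiplication} for the coalgebra $A$ the right-hand side equals $\lambda'$. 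This gives $\lambda=\lambda'$.

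I do not anticipate a serious obstacle here: the whole argument is essentially a one-line composition, and the two needed ingredients — decomposition via (3) and ``left inverse to multiplication'' via (4) — are both already in the definition. The only mildly delicate point is making sure the unit/counit identifications $A\otimes I\cong A$ are handled correctly when reducing $(\lambda'\otimes\epsilon_A)\cdot\Delta$ to $\lambda'$, but this is exactly axiom \eqref{counit comultiplication}, so no new structure is required.
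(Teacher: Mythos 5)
Your argument is correct and coincides with the paper's proof: the paper establishes the same chain $\lambda=\lambda\cdot m\cdot(\kappa\otimes e)\cdot(\lambda'\otimes\alpha)\cdot\Delta=(1_X\otimes\epsilon_B)\cdot(\lambda'\otimes\alpha)\cdot\Delta=\lambda'$ via a commutative diagram whose three cells are exactly your three steps — axiom (3) for $\lambda'$, axiom (4) for $\lambda$, and the counit axiom \eqref{counit comultiplication} together with $\epsilon_B\cdot\alpha=\epsilon_A$. No gap; your equational write-up is just the diagram read off left to right.
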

}
\begin{proof}
Let us suppose that {\blue there exist two morphisms $\lambda$ and $\lambda'$} satisfying the conditions of Definition \ref{definition split extension}. Then the commutativity of the following diagram shows that $\lambda'$ has to be equal to $\lambda$.
\begin{center}
\begin{tikzpicture}[descr/.style={fill=white},baseline=(A.base),scale=0.75] 
\node (A) at (0,0) {$ A$};
\node (B) at (10,0) {$X$};
\node (C) at (10,3) {$A  $};
\node (D) at (7.5,3) {$ A^2$};
\node (E) at (5,3) {$ X \ox B$};
\node (F) at (2.5,3) {$ A^2$};
\node (G) at (0,3) {$ A $};
\node at (8,2) {$ (4) $};
\node at (5,3.7) {$ (3) $};
\node at (4,1.5) {$ \eqref{counit comultiplication} $};
\draw[commutative diagrams/.cd, ,font=\scriptsize]
(G.south) edge[commutative diagrams/equal]  (A.north);
\draw[double distance = 2pt] (G.north) to[bend left=20]  (C.north west);
\path[->,font=\scriptsize]
(E.south) edge node[descr] {$ 1_X \ox {\blue \epsilon_B}$} (B.north west)
(C.south) edge node[descr] {$\lambda$} (B.north)
(A.east) edge node[above] {$ \lambda' $} (B.west)
(D.east) edge node[above] {$m$} (C.west)
(E.east) edge node[above] {$\kappa \ox e$} (D.west)
(F.east) edge node[above] {$\lambda'  \ox \alpha$} (E.west)
(G.east) edge node[above] {$\Delta$} (F.west)
;
\end{tikzpicture}
\end{center}
\qed
\end{proof}

Another important property of the split extensions of bialgebras is given by the following proposition:

\begin{proposition}\label{jointly epic}
Let 
\begin{tikzpicture}[descr/.style={fill=white},baseline=(A.base)] 
\node (A) at (0,0) {$A$};
\node (B) at (2.5,0) {$B$};
\node (C) at (-2.5,0) {$X$};
\path[dashed,->,font=\scriptsize]
([yshift=2pt]A.west) edge node[above] {$\lambda$} ([yshift=2pt]C.east);
\path[->,font=\scriptsize]
([yshift=-4pt]C.east) edge node[below] {$\kappa$} ([yshift=-4pt]A.west)
([yshift=-4pt]A.east) edge node[below] {$\alpha$} ([yshift=-4pt]B.west)
([yshift=2pt]B.west) edge node[above] {$e$} ([yshift=2pt]A.east);
\end{tikzpicture} 
be a split extension of bialgebras, then $\kappa$ and $e$ are jointly epimorphic in the category of bialgebras (and in the category of algebras)
\end{proposition}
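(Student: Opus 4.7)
The plan is to use condition (3) of Definition \ref{definition split extension}, namely the identity
\[ 1_A = m \cdot \bigl((\kappa\cdot \lambda) \otimes (e \cdot \alpha)\bigr) \cdot \Delta, \]
which expresses $1_A$ as factoring through $\kappa \otimes e$ followed by the multiplication $m$ of $A$. This equation is exactly the ``joint surjectivity'' one would expect, once paired with the fact that any algebra morphism out of $A$ commutes with $m$.

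Concretely, I would proceed as follows. Let $f, g \colon A \to C$ be a pair of algebra (resp.\ bialgebra) morphisms with $f \cdot \kappa = g \cdot \kappa$ and $f \cdot e = g \cdot e$. Using condition (3) I would rewrite
\[ f = f \cdot 1_A = f \cdot m \cdot (\kappa \otimes e) \cdot (\lambda \otimes \alpha) \cdot \Delta, \]
then push $f$ through $m$ using that $f$ is a morphism of algebras to obtain
\[ f = m_C \cdot \bigl((f \cdot \kappa) \otimes (f \cdot e)\bigr) \cdot (\lambda \otimes \alpha) \cdot \Delta. \]
Substituting the assumed equalities $f\cdot\kappa = g\cdot\kappa$ and $f\cdot e = g\cdot e$ and running the computation backwards using that $g$ is also a morphism of algebras gives
\[ f = m_C \cdot \bigl((g \cdot \kappa) \otimes (g \cdot e)\bigr) \cdot (\lambda \otimes \alpha) \cdot \Delta = g \cdot m \cdot (\kappa \otimes e) \cdot (\lambda \otimes \alpha) \cdot \Delta = g \cdot 1_A = g, \]
which is exactly the desired conclusion.

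I would present this as a commutative diagram in which the two outside routes from $A$ to $C$ are $f$ and $g$ respectively, the middle column being $m \cdot (\kappa \otimes e) \cdot (\lambda \otimes \alpha) \cdot \Delta$, with the central square commuting because $f$ and $g$ agree after composing with $\kappa$ on the left factor and $e$ on the right factor, and the outer triangles commuting because $f$ and $g$ are algebra morphisms. There is no real obstacle here: the argument only uses condition (3) of the definition of a split extension and the preservation of multiplication, so it applies verbatim in the category of algebras as well as in the category of bialgebras; no use is made of the comultiplication or counit compatibility, nor of the other axioms of a split extension.
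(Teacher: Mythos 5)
Your argument is correct and is essentially the paper's own proof: both use condition (3) to write $1_A = m\cdot((\kappa\cdot\lambda)\otimes(e\cdot\alpha))\cdot\Delta$, push the test morphisms through $m$ using that they preserve multiplication, and substitute the hypotheses $f\cdot\kappa = g\cdot\kappa$, $f\cdot e = g\cdot e$. Your closing observation that only the algebra-morphism property is used matches the remark at the end of the paper's proof.
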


\begin{proof}
Let $v,w \colon A \rightarrow Y$ be {\blue two} morphisms of bialgebras such that $ v \cdot \kappa = w \cdot \kappa$ and $ v \cdot e = w \cdot e$. 
\begin{center}
\begin{tikzpicture}[descr/.style={fill=white},baseline=(A.base),scale=0.8,yscale=0.6] 
\node (A) at (0,0) {$ A $};
\node (B) at (9,0) {$Y $};
\node (C) at (9,9) {$A $};
\node (D) at (0,9) {$A$};
\node (A') at (3,3) {$ A^2 $};
\node (B') at (6,3) {$Y^2 $};
\node (C') at (6,6) {$A^2 $};
\node (D') at (3,6) {$A^2$};
\node (X) at (1.5,5.5) {$(3)$};
\node (Y) at (4.5,7.75) {$(3)$};
\draw[commutative diagrams/.cd, ,font=\scriptsize]
(D.south) edge[commutative diagrams/equal]  (A.north)
(D.east) edge[commutative diagrams/equal]  (C.west);
\path[->,font=\scriptsize]
(A.east) edge node[above] {$w$} (B.west)
(A'.east) edge node[above] {$w \ox w$} (B'.west)
(D'.east) edge node[above] {$(\kappa \cdot \lambda) \ox (e \cdot \alpha)$} (C'.west)
(C.south) edge node[descr] {$v$} (B.north)
(C'.south) edge node[descr] {$v \ox v$} (B'.north)
(D'.south) edge node[descr] {$(\kappa \cdot \lambda) \ox (e \cdot \alpha)$} (A'.north)
(D.south east) edge node[descr] {$\Delta$} (D'.north west)
(A'.south west) edge node[descr] {$m$} (A.north east)
(C'.north east) edge node[descr] {$m$} (C.south west)
(B'.south east) edge node[descr] {$m$} (B.north west)
;
\end{tikzpicture}
\end{center}
The above diagram allows us to conclude that $v$ and $w$ are equal. Hence, $\kappa$ and $e$ are jointly epimorphic. Note that we only use that $v$ and $w$ are algebra morphisms (we do not need them to be coalgebra morphisms).
\qed
\end{proof}

\begin{definition}\label{morph split ext}
A morphism of split extensions from  
\begin{tikzpicture}[descr/.style={fill=white},baseline=(A.base),xscale=0.6] 
\node (A) at (0,0) {$A$};
\node (B) at (2.5,0) {$B$};
\node (C) at (-2.5,0) {$X$};
\path[dashed,->,font=\scriptsize]
([yshift=2pt]A.west) edge node[above] {$\lambda$} ([yshift=2pt]C.east);
\path[->,font=\scriptsize]
([yshift=-4pt]C.east) edge node[below] {$\kappa$} ([yshift=-4pt]A.west)
([yshift=-4pt]A.east) edge node[below] {$\alpha$} ([yshift=-4pt]B.west)
([yshift=2pt]B.west) edge node[above] {$e$} ([yshift=2pt]A.east);
\end{tikzpicture} 
  to 
\begin{tikzpicture}[descr/.style={fill=white},baseline=(A.base),xscale=0.6] 
\node (A) at (0,0) {$A'$};
\node (B) at (2.5,0) {$B'$};
\node (C) at (-2.5,0) {$X'$};
\path[dashed,->,font=\scriptsize]
([yshift=2pt]A.west) edge node[above] {$\lambda'$} ([yshift=2pt]C.east);
\path[->,font=\scriptsize]
([yshift=-4pt]C.east) edge node[below] {$\kappa'$} ([yshift=-4pt]A.west)
([yshift=-4pt]A.east) edge node[below] {$\alpha'$} ([yshift=-4pt]B.west)
([yshift=2pt]B.west) edge node[above] {$e'$} ([yshift=2pt]A.east);
\end{tikzpicture} 
is given by {\blue three} morphisms of bialgebras $g\colon B \rightarrow B'$, $v \colon X \rightarrow X'$ and $p \colon A \rightarrow A'$ such {\blue that the diagram}  \begin{equation}\label{commute} 
\begin{tikzpicture}[descr/.style={fill=white},baseline=(current  bounding  box.center),xscale=0.7] 
\node (A) at (0,0) {$A$};
\node (B) at (2.5,0) {$B$};
\node (C) at (-2.5,0) {$X$};
\node (A') at (0,-2) {$A'$};
\node (B') at (2.5,-2) {$B'$};
\node (C') at (-2.5,-2) {$X'$};
\path[dashed,->,font=\scriptsize]
([yshift=2pt]A.west) edge node[above] {$\lambda$} ([yshift=2pt]C.east)
([yshift=2pt]A'.west) edge node[above] {$\lambda'$} ([yshift=2pt]C'.east);
\path[->,font=\scriptsize]
(B.south) edge node[right] {$ g$}  (B'.north)
 (C.south) edge node[left] {$ v $}  (C'.north)
(A.south) edge node[left] {$ p$} (A'.north)
([yshift=-4pt]C'.east) edge node[below] {$\kappa'$} ([yshift=-4pt]A'.west)
([yshift=-4pt]A'.east) edge node[below] {$\alpha'$} ([yshift=-4pt]B'.west)
([yshift=2pt]B'.west) edge node[above] {$e'$} ([yshift=2pt]A'.east)
([yshift=-4pt]C.east) edge node[below] {$\kappa$} ([yshift=-4pt]A.west)
([yshift=-4pt]A.east) edge node[below] {$\alpha$} ([yshift=-4pt]B.west)
([yshift=2pt]B.west) edge node[above] {$e$} ([yshift=2pt]A.east);
\end{tikzpicture}
\end{equation}
{\blue commutes in $\C$.}
\end{definition}
We do not need to ask the commutativity of all the squares thanks to this corollary:
\begin{corollary} \label{comm}
Let $(g,v,p)$ be a morphism of split extensions of bialgebras 
\begin{center}
\begin{tikzpicture}[descr/.style={fill=white},baseline=(A'.base),xscale=0.7] 
\node (A) at (0,0) {$A$};
\node (B) at (2.5,0) {$B$};
\node (C) at (-2.5,0) {$X$};
\node (A') at (0,-2) {$A'$};
\node (B') at (2.5,-2) {$B'.$};
\node (C') at (-2.5,-2) {$X'$};
\path[dashed,->,font=\scriptsize]
([yshift=2pt]A.west) edge node[above] {$\lambda$} ([yshift=2pt]C.east)
([yshift=2pt]A'.west) edge node[above] {$\lambda'$} ([yshift=2pt]C'.east);
\path[->,font=\scriptsize]
(B.south) edge node[right] {$ g$}  (B'.north)
 (C.south) edge node[left] {$ v $}  (C'.north)
(A.south) edge node[left] {$ p$} (A'.north)
([yshift=-4pt]C'.east) edge node[below] {$\kappa'$} ([yshift=-4pt]A'.west)
([yshift=-4pt]A'.east) edge node[below] {$\alpha'$} ([yshift=-4pt]B'.west)
([yshift=2pt]B'.west) edge node[above] {$e'$} ([yshift=2pt]A'.east)
([yshift=-4pt]C.east) edge node[below] {$\kappa$} ([yshift=-4pt]A.west)
([yshift=-4pt]A.east) edge node[below] {$\alpha$} ([yshift=-4pt]B.west)
([yshift=2pt]B.west) edge node[above] {$e$} ([yshift=2pt]A.east);
\end{tikzpicture}
\end{center}
If $ p \cdot \kappa = \kappa' \cdot v$ and $ p \cdot e =  e' \cdot g$ hold, then the identities $ \lambda' \cdot p = v \cdot \lambda $ and $\alpha' \cdot p = g \cdot \alpha  $ follow, and conversely. 
\end{corollary}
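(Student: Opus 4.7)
The plan is to prove the two implications separately, leveraging the joint epimorphicity result (Proposition~\ref{jointly epic}) for one identity and the key axiom (3) of a split extension (namely $m \cdot ((\kappa\cdot\lambda)\otimes(e\cdot\alpha))\cdot\Delta = 1_A$) for the others.

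\smallskip
\textbf{Forward direction.} Assume $p\cdot\kappa=\kappa'\cdot v$ and $p\cdot e=e'\cdot g$. To establish $\alpha'\cdot p=g\cdot\alpha$, I will use Proposition~\ref{jointly epic}: both sides are algebra morphisms $A \to B'$, so it suffices to check equality after pre-composing with $\kappa$ and with $e$. Precomposing with $\kappa$ gives $\alpha'\cdot\kappa'\cdot v = u_{B'}\cdot\epsilon_{X'}\cdot v = u_{B'}\cdot\epsilon_X$ on one side (using axiom (2) for the target extension and the fact that $v$ is a coalgebra morphism), and $g\cdot u_B \cdot\epsilon_X = u_{B'}\cdot\epsilon_X$ on the other (since $g$ is an algebra morphism). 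Precomposing with $e$ gives $\alpha'\cdot e'\cdot g = g$ and $g\cdot\alpha\cdot e = g$, so the two maps agree.

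\smallskip
\textbf{The $\lambda$-identity.} For $\lambda'\cdot p = v\cdot\lambda$, Proposition~\ref{jointly epic} cannot be applied directly since $\lambda$ is only a coalgebra morphism. Instead, I will exploit axiom (3) of the source extension to rewrite $1_A$, then axiom (4) of the target:
\[
\lambda'\cdot p \;=\; \lambda'\cdot p\cdot m\cdot\bigl((\kappa\cdot\lambda)\ox(e\cdot\alpha)\bigr)\cdot\Delta
\;=\; \lambda'\cdot m\cdot\bigl((\kappa'\cdot v\cdot\lambda)\ox(e'\cdot g\cdot\alpha)\bigr)\cdot\Delta,
\]
where I use that $p$ is an algebra morphism together with the hypothesis. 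Applying axiom (4) for the target extension, $\lambda'\cdot m\cdot(\kappa'\ox e') = 1_{X'}\ox\epsilon_{B'}$, this reduces to $\bigl((v\cdot\lambda)\ox(\epsilon_{B'}\cdot g\cdot\alpha)\bigr)\cdot\Delta$. Since $g$ and $\alpha$ are coalgebra morphisms, $\epsilon_{B'}\cdot g\cdot\alpha = \epsilon_A$, and the counit axiom collapses the expression to $v\cdot\lambda$.

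\smallskip
\textbf{Converse direction.} Assume $\lambda'\cdot p = v\cdot\lambda$ and $\alpha'\cdot p = g\cdot\alpha$. Here the idea is symmetric: apply axiom (3) to the target extension so that $1_{A'} = m\cdot\bigl((\kappa'\cdot\lambda')\ox(e'\cdot\alpha')\bigr)\cdot\Delta$, then compose with $p\cdot e$ (respectively $p\cdot\kappa$) on the right. Using that $p$ and $e$ (respectively $\kappa$) are coalgebra morphisms, I move them past $\Delta$ to obtain
\[
p\cdot e \;=\; m\cdot\bigl((\kappa'\cdot\lambda'\cdot p\cdot e)\ox(e'\cdot\alpha'\cdot p\cdot e)\bigr)\cdot\Delta \;=\; m\cdot\bigl((\kappa'\cdot v\cdot\lambda\cdot e)\ox(e'\cdot g\cdot\alpha\cdot e)\bigr)\cdot\Delta,
\]
and then axioms (1) and (2) of the source extension together with $\kappa'$ being an algebra morphism turn the first factor into $u_{A'}\cdot\epsilon_B$ and the second into $e'\cdot g$; the unit and counit axioms finish off to give $e'\cdot g$. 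The argument for $p\cdot\kappa = \kappa'\cdot v$ is entirely analogous, now with the first factor collapsing to $\kappa'\cdot v$ and the second to $u_{A'}\cdot\epsilon_X$. The main (mild) obstacle is to keep the bookkeeping of which maps are algebra versus coalgebra morphisms correct; everything else is a chain of axiom invocations.
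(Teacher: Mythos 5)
Your proof is correct, and it diverges from the paper's argument in a way worth noting. The paper establishes only the forward direction, and it does so by invoking Proposition~\ref{jointly epic} for \emph{both} identities: it checks $(\lambda'\cdot p)\cdot\kappa=(v\cdot\lambda)\cdot\kappa$ and $(\lambda'\cdot p)\cdot e=(v\cdot\lambda)\cdot e$ (via axioms (1) and (2)) and concludes $\lambda'\cdot p=v\cdot\lambda$, then says the $\alpha$-identity is similar. You correctly observe that Proposition~\ref{jointly epic} is only licensed for \emph{algebra} morphisms out of $A$ (its proof uses multiplicativity of the two maps being compared), whereas $\lambda'\cdot p$ and $v\cdot\lambda$ are merely coalgebra morphisms --- indeed $\lambda$ satisfies only the twisted multiplicativity of Proposition~\ref{prop lamda morph}. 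Your replacement argument, inserting $1_A=m\cdot((\kappa\cdot\lambda)\ox(e\cdot\alpha))\cdot\Delta$ from axiom (3) and collapsing with axiom (4) of the target extension, is a clean and fully justified way to get the $\lambda$-identity; your use of joint epimorphicity for $\alpha'\cdot p=g\cdot\alpha$ is legitimate since both sides are algebra morphisms. You also supply the converse direction, which the statement asserts but the paper's proof omits entirely; your computation via axiom (3) of the target extension together with axioms (1) and (2) of the source is correct. In short, your route buys a rigorous treatment of the coalgebra-only map $\lambda$ and a complete proof of the equivalence, at the cost of a slightly longer computation than the paper's two one-line identity checks.
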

\begin{proof}
Let us suppose that $p \cdot \kappa = \kappa' \cdot v$ and $p \cdot e = e' \cdot g$, then by Proposition \ref{jointly epic}, we can prove that $\lambda' \cdot p = v \cdot \lambda$ by checking that $ (\lambda' \cdot p)  \cdot \kappa = (v \cdot \lambda) \cdot \kappa$ and $ (\lambda' \cdot p) \cdot e = (v \cdot \lambda) \cdot e$, {\red which is done in the following identities}:
\[  (\lambda' \cdot p)  \cdot \kappa = \lambda' \cdot \kappa' \cdot v = v =(v \cdot \lambda) \cdot \kappa, \]
\[  (\lambda' \cdot p) \cdot e = \lambda' \cdot e' \cdot g = {\blue \epsilon_B} = (v \cdot \lambda) \cdot e. \] Similarly, one can check that $\alpha' \cdot p = g \cdot \alpha$.
\qed
\end{proof}

\begin{proposition}\label{prop lamda morph}
Let 
\begin{tikzpicture}[descr/.style={fill=white},baseline=(A.base)] 
\node (A) at (0,0) {$A$};
\node (B) at (2.5,0) {$B$};
\node (C) at (-2.5,0) {$X$};
\path[dashed,->,font=\scriptsize]
([yshift=2pt]A.west) edge node[above] {$\lambda$} ([yshift=2pt]C.east);
\path[->,font=\scriptsize]
([yshift=-4pt]C.east) edge node[below] {$\kappa$} ([yshift=-4pt]A.west)
([yshift=-4pt]A.east) edge node[below] {$\alpha$} ([yshift=-4pt]B.west)
([yshift=2pt]B.west) edge node[above] {$e$} ([yshift=2pt]A.east);
\end{tikzpicture} 
  be a split extension of bialgebras, then the following diagram 
\begin{equation}\label{lambda morph of alg}
\begin{tikzpicture}[descr/.style={fill=white},baseline=(current  bounding  box.center),yscale=0.9] 
\node (A) at (0,0) {$ A$};
\node (B) at (9,0) {$X$};
\node (C) at (9,1.5) {$X^2 $};
\node (D) at (9,3) {$ A^2$};
\node (E) at (6.5,3) {$A^3$};
\node (F) at (2.5,3) {$ A^3$};
\node (G) at (0,3) {$ A^2 $};
\path[->,font=\scriptsize]
(G.south) edge node[descr] {$m$} (A.north)
(D.south) edge node[descr] {$\lambda \ox \lambda$} (C.north)
(C.south) edge node[descr] {$ m$} (B.north)
(A.east) edge node[above] {$ \lambda$} (B.west)
(E.east) edge node[above] {$1_A \ox m$} (D.west)
(F.east) edge node[above] {$1_A \ox (e \cdot \alpha) \ox (\kappa \cdot \lambda)$} (E.west)
(G.east) edge node[above] {$\Delta \ox 1_A$} (F.west)
;
\end{tikzpicture}
\end{equation}
commutes in $\C$.
\end{proposition}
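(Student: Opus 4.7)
The plan is to exploit axiom (3) of Definition \ref{definition split extension}, which provides the decomposition $1_A = m \cdot ((\kappa\cdot\lambda) \otimes (e\cdot\alpha)) \cdot \Delta$, together with the partial associativities (6), (7), (8), axiom (4), and the counit axioms of $A$ and $B$.

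I would first isolate two auxiliary identities. The first,
\[ \lambda \cdot m \cdot (\kappa \otimes 1_A) = m \cdot (1_X \otimes \lambda), \qquad (\mathrm{A}) \]
asserts that $\lambda$ is left-linear over the image of $\kappa$. To prove it, insert axiom (3) into the second factor, apply partial associativity (7) to reassociate (factoring the newly created middle $\kappa\cdot\lambda$ through $\kappa$), use that $\kappa$ is an algebra morphism to combine the two $\kappa$-images into $\kappa\cdot m$, invoke axiom (4) to reduce $\lambda\cdot m\cdot(\kappa\otimes e)$ to $1_X\otimes\epsilon_B$, and close with the counit axiom together with $\epsilon_B\cdot\alpha = \epsilon_A$ (valid since $\alpha$ is a coalgebra morphism). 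A parallel argument, using (6) in place of (7), identity $(\mathrm{A})$, condition (2) of Definition \ref{definition split extension}, the bialgebra axiom $\epsilon_B\cdot m = \epsilon_B\otimes\epsilon_B$, and the unit axiom $m\cdot(1_X\otimes u_X)=1_X$, yields
\[ \lambda \cdot m \cdot (1_A \otimes e) = \lambda \otimes \epsilon_B. \qquad (\mathrm{B}) \]

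For the main argument, apply axiom (3) to the first factor of $m$ and use partial associativity (6) to rebracket, obtaining $\lambda\cdot m \cdot (\kappa\cdot\lambda \otimes m\cdot(e\cdot\alpha \otimes 1_A))\cdot(\Delta \otimes 1_A)$; identity $(\mathrm{A})$ then factors $\lambda(a_{(1)})$ out on the left. To match the right-hand side of \eqref{lambda morph of alg}, it remains to replace the inner $1_A$ by $\kappa\cdot\lambda$. For this, decompose that $1_A$ by axiom (3) once more, reassociate via partial associativity (8), apply $(\mathrm{B})$ to turn the newly produced $e\cdot\alpha$-factor into the counit scalar $\epsilon_A(b_{(2)})$, and recombine through the counit identity $\sum b_{(1)}\epsilon_A(b_{(2)}) = b$ inside the surviving $\kappa\cdot\lambda(b_{(1)})$, producing $\kappa\cdot\lambda(b)$ as required.

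The main difficulty is the bookkeeping: the three partial associativities (6), (7), (8) each apply only to specific arrangements of $\kappa$- and $e$-images among three adjacent factors, so the precise order in which axiom (3) is invoked at each stage (first on $a$ to enable (6) and $(\mathrm{A})$, then separately on $b$ to enable (8) and $(\mathrm{B})$) is exactly what lets the two auxiliary identities assemble into the stated equation.
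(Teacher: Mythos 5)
Your argument is correct, and it reaches the identity $\lambda\cdot m = m\cdot(\lambda\otimes\lambda)\cdot(1_A\otimes m)\cdot(1_A\otimes(e\cdot\alpha)\otimes(\kappa\cdot\lambda))\cdot(\Delta\otimes 1_A)$ by a route that is organized quite differently from the paper's. The paper works entirely at the level of $m$: Figures \ref{DiagramA2}, \ref{DiagramB2} and \ref{DiagramC2} are combined into Figure \ref{combinationABC} to prove the stronger normal-form decomposition
\[
m \;=\; m\cdot(\kappa\otimes e)\cdot(m\otimes 1_B)\cdot(\lambda\otimes\lambda\otimes\alpha)\cdot(1_A\otimes m\otimes 1_A)\cdot\bigl(1_A\otimes(e\cdot\alpha)\otimes(\kappa\cdot\lambda)\otimes m\bigr)\cdot(\cdots)\cdot(\Delta\otimes\Delta),
\]
which identifies both the $\kappa$-component and the $e$-component of a product, and only at the very end applies $\lambda$ together with condition (4). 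You instead apply $\lambda$ from the outset and factor the computation through two module-type identities, $\lambda\cdot m\cdot(\kappa\otimes 1_A)=m\cdot(1_X\otimes\lambda)$ and $\lambda\cdot m\cdot(1_A\otimes e)=\lambda\otimes\epsilon_B$, each obtained from one application of (3), one partial associativity, (4) (resp. (2)), and counitality; the main statement then follows by inserting (3) into each tensor factor in turn and chaining these lemmas. The toolbox is the same — (3), (4), the partial associativities, the counit axioms, and the fact that $\kappa$, $e$ are algebra morphisms and $\alpha$ a coalgebra morphism — but your decomposition is more modular, avoids the large appendix diagram, and proves only what is needed; as a bonus, your identity $(\mathrm{A})$ is precisely the $X$-module linearity of $\lambda$ that the paper later extracts as a consequence of this proposition when comparing with cleft sequences. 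One cosmetic remark: in proving $(\mathrm{A})$ the arrangement $\kappa(x)\cdot(\kappa\lambda(a_{(1)})\, e\alpha(a_{(2)}))$ matches the pattern of condition (8) just as literally as that of (7); either works, since the outer factor is itself a $\kappa$-image, and (7) is in any case derivable from (3), (6) and (8) by Remark \ref{useless axioms}.
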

\begin{proof} 
In order to prove the proposition, we make the three Figures \ref{DiagramA2}, \ref{DiagramB2} and \ref{DiagramC2} commute. 
 The Figure \ref{DiagramA2} is commutative since $e$ and $\alpha$ are morphisms of (bi)algebras.
\begin{figure}[b]
\centering
\begin{tikzpicture}[descr/.style={fill=white},baseline=(A.base),xscale=1.6,yscale=0.75] 
\node (A) at (0,0) {$ A^4$};
\node (B) at (3,0) {$A^3$};
\node (C) at (6,0) {$A^2 $};
\node (D) at (8,0) {$ A$};
\node (E) at (8,3) {$A^2$};
\node (F) at (8,6) {$ A^3$};
\node (G) at (8,9) {$ A^3 $};
\node (H) at (8,12) {$ A^4$};
\node (I) at (0,12) {$A^5$};
\node (J) at (0,9) {$A^5 $};
\node (K) at (0,6) {$ A^5$};
\node (L) at (0,3) {$A^4$};
\node (M) at (4,9) {$ A^5$};
\node (N) at (6,9) {$ A^3 $};
\node (O) at (4,6) {$ A^4$};
\node (P) at (4,3) {$A^4$};
\node (Q) at (6,3) {$A^3 $};
\node at (7.5,4.5) {$(8)$};
\node at (4,1.5) {$(7)$};
\draw[commutative diagrams/.cd, ,font=\scriptsize]
(I.south) edge[commutative diagrams/equal]  (J.north);
\path[->,font=\scriptsize]
(A.east) edge node[above] {$ 1_A \ox m \ox 1_A$} (B.west)
(B.east) edge node[above] {$ 1_A \ox m $} (C.west)
(C.east) edge node[above] {$  m $} (D.west)
(P.east) edge node[above] {$ 1_A \ox 1_A \ox m $} (Q.west)
(M.east) edge node[below] {$ 1_A \ox m \ox m $} (N.west)
(I.east) edge node[above] {$ 1_A \ox (e \cdot \alpha) \ox (\kappa \cdot \lambda) \ox m $} (H.west)
(G.west) edge node[above] {$ 1_A \ox 1_A \ox (e \cdot \alpha) $} (N.east)
(J.east) edge node[above] {$ 1_A \ox (e \cdot \alpha) \ox (\kappa \cdot \lambda)  \ox (e \cdot \alpha) \ox (e \cdot \alpha) $} (M.west)
(H.south) edge node[descr] {$1_A \ox m \ox 1_A$} (G.north)
(G.south) edge node[descr] {$ (\kappa \cdot \lambda)^2   \ox (e \cdot \alpha)$} (F.north)
(F.south) edge node[descr] {$ m \ox 1_A$} (E.north)
(E.south) edge node[descr] {$ m$} (D.north)
(L.south) edge node[descr] {$ (\kappa \cdot \lambda) ^2  \ox 1_A \ox 1_A$} (A.north)
(N.south) edge node[descr] {$ (\kappa \cdot \lambda)^2 \ox 1_A$} (Q.north)
(O.south) edge node[descr] {$ (\kappa \cdot \lambda)^2  \ox 1_A \ox 1_A$} (P.north)
(K.south) edge node[descr] {$1_A \ox m \ox 1_A \ox  1_A$} (L.north)

(Q.south) edge node[descr] {$1_A \ox m$} (C.north)
(M.south) edge node[descr] {$  1_A \ox m \ox 1_A \ox  1_A$} (O.north)
(J.south) edge node[descr] {$ 1_A \ox (e \cdot \alpha) \ox (\kappa \cdot \lambda)  \ox (e \cdot \alpha)^2$} (K.north)
;
\end{tikzpicture} 
\caption{Diagram $(A)$}
\label{DiagramA2}
\end{figure}
The Figure \ref{DiagramB2} commutes thanks to the counitality of the comultiplication, the unitality of the multiplication and the fact that $(e \cdot \alpha)$ is a morphism of (bi)algebras.
\begin{figure}
\centering
\begin{tikzpicture}[descr/.style={fill=white},xscale=1.1,yscale=0.65] 
\node (A) at (0,10) {$ A^2$};
\node (B) at (4,10) {$A^4$};
\node (C) at (8,10) {$A^5 $};
\node (D) at (12,10) {$ A^5$};
\node (E) at (12,8) {$A^5$};
\node (F) at (12,6) {$ A^4$};
\node (G) at (12,4) {$ A^4 $};
\node (H) at (12,2) {$ A^4$};
\node (I) at (12,0) {$A^4$};
\node (J) at (6,0) {$A^4 $};
\node (K) at (0,0) {$ A^6$};
\node (L) at (0,2) {$A^6$};
\node (M) at (0,4) {$ A^6$};
\node (N) at (0,6) {$ A^6 $};
\node (O) at (0,8) {$ A^4$};
\node (P) at (5,4) {$A^6$};
\node (Q) at (8.5,4) {$A^5 $};
\node (R) at (4,2) {$A^5$};
\node at (6,6) {$\eqref{unital multiplication}+\eqref{counit comultiplication}$};
\node at (6,3) {$(1)+(2)$};
\draw[commutative diagrams/.cd, ,font=\scriptsize]
(H.south) edge[commutative diagrams/equal]  (I.north)
(G.south) edge[commutative diagrams/equal]  (H.north)
(L.south) edge[commutative diagrams/equal]  (K.north);
\path[->,font=\scriptsize]
(A.east) edge node[above] {$ \Delta \ox \Delta $} (B.west)
(B.east) edge node[above] {$ 1_A \ox \Delta \ox 1_A \ox 1_A $} (C.west)
(C.east) edge node[above] {$ 1_A \ox 1_A \ox {\blue \sigma_{A,A}} \ox 1_A $} (D.west)
(P.east) edge node[above] {$ 1_A \ox m \ox 1_A \ox 1_A \ox 1_A $} (Q.west)
(K.east) edge node[above] {$ 1_A \ox m \ox m \ox 1_A $} (J.west)
(J.east) edge node[above] {$ 1_A \ox(\kappa \cdot \lambda) \ox  (e \cdot \alpha) \ox 1_A $} (I.west)
(M.south) edge node[descr] {$  ((\kappa \cdot \lambda) \ox  (e \cdot \alpha))^3$} (L.north)
(M.east) edge node[above] {$ ((\kappa \cdot \lambda) \ox (e \cdot \alpha))^2 \ox (u_A \cdot {\blue \epsilon_A}) \ox (e \cdot \alpha) $} (P.west)
(Q.east) edge node[above] {$ 1_A  \ox (\kappa \cdot \lambda) \ox m \ox 1_A$} (G.west)
(L.east) edge node[above] {$ 1_A \ox m \ox (e \cdot \alpha)^2 \ox 1_A $} (R.west)
(R.east) edge node[above] {$ 1_A \ox  (\kappa \cdot \lambda)  \ox m \ox 1_A $} (H.west)
(D.south) edge node[descr,xshift=-5mm] {$  ((\kappa \cdot \lambda) \ox  (e \cdot \alpha))^2 \ox  (e \cdot \alpha) $} (E.north)
(F.south) edge node[descr,xshift=-2mm] {$  1_A \ox (\kappa \cdot \lambda) \ox 1_A \ox 1_A $} (G.north)
(A.south) edge node[descr] {$ \Delta \ox \Delta$} (O.north)
(O.south) edge node[descr] {$  1_A \ox \Delta \ox \Delta \ox 1_A$} (N.north)
(N.south) edge node[descr] {$1_{A^2} \ox {\blue \sigma_{A,A}} \ox 1_{A^2}$} (M.north)
(E.south) edge node[descr] {$1_A \ox m \ox 1_A \ox 1_A$} (F.north)
;
\end{tikzpicture} 
\caption{Diagram $(B)$}
\label{DiagramB2}
\end{figure}
In Figure \ref{DiagramC2} we use the fact that we work with coalgebra morphisms.

\begin{figure}
\centering
\begin{tikzpicture}[descr/.style={fill=white},yscale=0.8] 
\node (A) at (1,1.25) {$ A^2$};
\node (B) at (7.5,1.25) {$A^2$};
\node (C) at (15,1.25) {$A $};
\node (D) at (15,2.5) {$ A$};
\node (E) at (15,4) {$A^2$};
\node (F) at (15,6) {$ A^3$};
\node (G) at (15,8) {$ A^4 $};
\node (H) at (15,10) {$ A^4$};
\node (I) at (15,12) {$A^6$};
\node (J) at (15,14) {$A^6 $};
\node (K) at (12,14) {$ A^6$};
\node (L) at (9,14) {$A^4$};
\node (M) at (6,14) {$ A^4$};
\node (N) at (3,14) {$ A^2 $};
\node (O) at (1,14) {$ A^2$};
\node (P) at (3,8) {$A^4$};
\node (Q) at (6,12) {$A^4 $};
\node (R) at (9,12) {$A^4$};
\node (S) at (12,12) {$A^6$};
\node (T) at (9,10) {$ A^3$};
\node (U) at (6,4) {$ A^3 $};
\node (O') at (3,12) {$ A^4$};
\node (X) at (3,2.5) {$A^3$};
\node (Y) at (7.5,2.5) {$A^2$};
\node at (13,8) {$(3)$};
\node at (9,6) {$(8)$};
\node at (4.5,7) {$(6)$};
\node at (2,7) {$(3)$};
\draw[commutative diagrams/.cd, ,font=\scriptsize]
(T.south east) edge[commutative diagrams/equal]  (F.west)
(M.east) edge[commutative diagrams/equal]  (L.west)
(O.east) edge[commutative diagrams/equal]  (N.west)
(A.east) edge[commutative diagrams/equal]  (B.west)
(D.south) edge[commutative diagrams/equal]  (C.north)
(Y.south) edge[commutative diagrams/equal]  (B.north)
(O.south) edge[commutative diagrams/equal]  (A.north);
\path[->,font=\scriptsize]
(N.east) edge node[above] {$ \Delta \ox \Delta $} (M.west)
(L.east) edge node[above] {$ 1_A \ox \Delta \ox \Delta \ox 1_A $} (K.west)
(R.east) edge node[above] {$ 1_A \ox \Delta \ox \Delta \ox 1_A $} (S.west)
(T.east) edge node[above] {$ 1_A \ox \Delta \ox 1_A $} (H.west)
(U.east) edge node[above] {$ 1_A \ox m $} (E.west)
(K.east) edge node[above] {$ 1_{A^2} \ox  {\blue \sigma_{A,A}} \ox 1_{A^2} $} (J.west)
(S.east) edge node[above] {$ 1_{A^2} \ox  {\blue \sigma_{A,A}} \ox 1_{A^2}  $} (I.west)
(X.east) edge node[above] {$  m \ox 1_A$} (Y.west)
(Y.east) edge node[above] {$  m $} (D.west)
(B.east) edge node[above] {$  m $} (C.west)
(L.south) edge node[descr] {$     \; \; \; \; \;\; \; \; \; \;  ((\kappa \cdot \lambda) \ox  (e \cdot \alpha))^2$} (R.north)
(O'.south) edge node[descr] {$((\kappa \cdot \lambda) \ox  (e \cdot \alpha))^2$} (P.north)
(M.south) edge node[descr] {$((\kappa \cdot \lambda) \ox  (e \cdot \alpha))^2 $} (Q.north)
(J.south) edge node[descr,xshift=-4mm] {$  ((\kappa \cdot \lambda) \ox  (e \cdot \alpha))^3 $} (I.north)
(H.south) edge node[descr,xshift=-8mm] {$  1_A \ox (\kappa \cdot \lambda) \ox (e \cdot \alpha) \ox 1_A $} (G.north)
(N.south) edge node[descr] {$ \Delta \ox \Delta$} (O'.north)
(P.south) edge node[descr] {$1_A \ox 1_A \ox m $} (X.north)
(Q.south) edge node[descr] {$1_A \ox 1_A \ox m $} (U.north)
(R.south) edge node[descr] {$ 1_A  \ox m \ox 1_A$} (T.north)
(I.south) edge node[descr,xshift=-15pt] {$1_A  \ox m^2 \ox 1_A$} (H.north)
(G.south) edge node[descr] {$1_A  \ox m \ox 1_A$} (F.north)
(F.south) edge node[descr] {$1_A  \ox m $} (E.north)
(E.south) edge node[descr] {$m $} (D.north)
;
\end{tikzpicture} 
\caption{Diagram $(C)$}
\label{DiagramC2}
\end{figure}
Finally, we combine the Figures \ref{DiagramA2}, \ref{DiagramB2} and \ref{DiagramC2} in Figure \ref{combinationABC} (in the appendix) and we obtain the following equality \begin{eqnarray*}
m =  m \cdot (\kappa \ox e) \cdot (m \ox 1_B) \cdot (\lambda \ox \lambda \ox \alpha) \cdot(1_A \ox m \ox 1_A) \cdot (1_A \otimes (e\cdot \alpha) \otimes (\kappa \cdot \lambda) \ox m)  \\  \cdot (1_A \ox 1_A \ox {\blue \sigma_{A,A}} \ox 1_A) \cdot (1_A \ox \Delta \ox 1_A \ox 1_A) \cdot (\Delta \otimes \Delta).
\end{eqnarray*} 

By composing with $\lambda$ and using the condition $(4)$, we can conclude that 
\[ \lambda \cdot m = m \cdot (\lambda \otimes \lambda) \cdot (1_A \otimes m) \cdot (1_A \otimes (e\cdot \alpha) \otimes (\kappa \cdot \lambda)) \cdot (\Delta \otimes 1_A). \]\qed 
\end{proof}

\begin{proposition}\label{Action bialgebra}
Given a split extension 
\begin{tikzpicture}[descr/.style={fill=white},baseline=(A.base)] 
\node (A) at (0,0) {$A$};
\node (B) at (2.5,0) {$B$};
\node (C) at (-2.5,0) {$X$};
\path[dashed,->,font=\scriptsize]
([yshift=2pt]A.west) edge node[above] {$\lambda$} ([yshift=2pt]C.east);
\path[->,font=\scriptsize]
([yshift=-4pt]C.east) edge node[below] {$\kappa$} ([yshift=-4pt]A.west)
([yshift=-4pt]A.east) edge node[below] {$\alpha$} ([yshift=-4pt]B.west)
([yshift=2pt]B.west) edge node[above] {$e$} ([yshift=2pt]A.east);
\end{tikzpicture} 
of bialgebras, we  can construct an action of bialgebras, $\act \colon B \ox X \rightarrow X$ defined by \[\triangleright  = \lambda \cdot m \cdot (e \otimes \kappa).\]
\end{proposition}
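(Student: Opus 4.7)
The plan is to verify each of the five action axioms of Definition \ref{def action} in turn for the candidate morphism $\triangleright = \lambda \cdot m \cdot (e \otimes \kappa) \colon B \otimes X \to X$. The arguments break into three easy verifications and two slightly more involved ones; none of them requires more than the hypotheses already collected in Definition \ref{definition split extension}, Remark \ref{useless axioms}, and Proposition \ref{prop lamda morph}.

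For the axiom \eqref{conditionaction1}, I would precompose $\triangleright$ with $u_B \otimes 1_X$. Since $e$ is an algebra morphism, $e \cdot u_B = u_A$; applying the unitality \eqref{unital multiplication} of $m$ then reduces the composite to $\lambda \cdot \kappa$, which equals $1_X$ by axiom $(1)$ of Definition \ref{definition split extension}. Axiom \eqref{condition action = e} is symmetric: precomposing with $1_B \otimes u_X$ and using $\kappa \cdot u_X = u_A$ together with unitality gives $\lambda \cdot e$, which equals $u_X \cdot \epsilon_B$ by axiom $(2)$ (cf.\ Remark \ref{useless axioms}). Axiom \eqref{coco} is essentially a rewriting: expanding $1_B \otimes \triangleright$ using the definition of $\triangleright$ recovers precisely the two sides of axiom $(5)$ of Definition \ref{definition split extension}.

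The only axioms that mix coalgebra and algebra structure are \eqref{eps et act} and \eqref{delta et act}. For \eqref{eps et act}, I would use that $\lambda$ is a coalgebra morphism (axiom $(9)$), so $\epsilon_X \cdot \lambda = \epsilon_A$; the algebra-morphism property of the counit gives $\epsilon_A \cdot m = \epsilon_A \otimes \epsilon_A$ by \eqref{m et epsilon}, and finally $\epsilon_A \cdot e = \epsilon_B$ and $\epsilon_A \cdot \kappa = \epsilon_X$ because $e$ and $\kappa$ are coalgebra morphisms. For \eqref{delta et act}, the computation proceeds in four steps: first, $\Delta \cdot \lambda = (\lambda \otimes \lambda) \cdot \Delta$ as $\lambda$ is a coalgebra morphism; second, the bialgebra compatibility \eqref{m et delta} on $A$ expands $\Delta \cdot m$ into $(m \otimes m)\cdot(1_A \otimes \sigma_{A,A} \otimes 1_A)\cdot(\Delta \otimes \Delta)$; third, $e$ and $\kappa$ being coalgebra morphisms lets me commute $\Delta \otimes \Delta$ past $e \otimes \kappa$; fourth, naturality of $\sigma$ converts $1_A \otimes \sigma_{A,A} \otimes 1_A$ composed with $(e \otimes e) \otimes (\kappa \otimes \kappa)$ into $(e \otimes \kappa \otimes e \otimes \kappa) \cdot (1_B \otimes \sigma_{B,X} \otimes 1_X)$. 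Regrouping gives $(\triangleright \otimes \triangleright) \cdot (1_B \otimes \sigma_{B,X} \otimes 1_X) \cdot (\Delta \otimes \Delta)$, as required.

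I do not expect a genuine obstacle here; the only step requiring care is \eqref{delta et act}, where one must track the order in which the morphisms $\Delta$, $m$, $e$, $\kappa$, $\lambda$, and $\sigma$ are composed and use naturality of the symmetry at the right place. A single commutative diagram assembling the four ingredients above suffices, and all remaining axioms are direct one-line applications of the axioms of Definition \ref{definition split extension} together with the algebra/coalgebra morphism properties of $e$, $\kappa$, and $\lambda$.
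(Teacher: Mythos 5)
Your proof is correct and follows essentially the same route as the paper: the unit axioms reduce to $\lambda\cdot\kappa=1_X$ and $\lambda\cdot e=u_X\cdot\epsilon_B$ via unitality, axiom \eqref{coco} is observed to be a restatement of condition $(5)$, and the two coalgebra axioms are checked exactly as in the paper's diagrams using \eqref{m et epsilon}, \eqref{m et delta}, the coalgebra-morphism property of $\lambda$, $e$, $\kappa$, and naturality of the symmetry.
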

\begin{proof}
We check all the axioms of the definition of actions of bialgebras (Definition \ref{def action}).
\begin{center}
\begin{tikzpicture}[descr/.style={fill=white},baseline=(A.base),yscale=0.8] 
\node (A) at (-2,0) {$X $};
\node (B) at (0,0) {$X$};
\node (C) at (0,2) {$A$};
\node (D) at (0,4) {$A^2$};
\node (E) at (0,6) {$B\ox X$};
\node (F) at (-2,6) {$X $};
\draw[->] (E.south east) to[bend left=20]node[left,scale=0.8] {${\blue \act} $} (B.north east);
\draw[commutative diagrams/.cd, ,font=\scriptsize]
(F.south) edge[commutative diagrams/equal]  (A.north)
(A.east) edge[commutative diagrams/equal]  (B.west);
\path[->,font=\scriptsize]
(F.east) edge node[above] {$u_B  \ox 1_X$} (E.west)
(C.south) edge node[descr] {$\lambda$} (B.north)
(D.south) edge node[descr] {$m$} (C.north)
(F.south east) edge node[descr] {$\kappa$} (C.north west)
(E.south) edge node[descr] {$e \ox \kappa$} (D.north)
;
\end{tikzpicture} 
\qquad
\begin{tikzpicture}[descr/.style={fill=white},baseline=(A.base),yscale=0.8] 
\node (A) at (-2,0) {$B $};
\node (B) at (0,0) {$X$};
\node (C) at (0,2) {$A$};
\node (D) at (0,4) {$A^2$};
\node (E) at (0,6) {$B\ox X$};
\node (F) at (-2,6) {$B $};
\draw[->] (E.south east) to[bend left=20]node[left,scale=0.8] {${\blue \act} $} (B.north east);
\draw[commutative diagrams/.cd, ,font=\scriptsize]
(F.south) edge[commutative diagrams/equal]  (A.north);
\path[->,font=\scriptsize]
(A.east) edge node[above] {$u_X \cdot {\blue \epsilon_B}$} (B.west)
(F.east) edge node[above] {$1_B  \ox u_X$} (E.west)
(C.south) edge node[descr] {$\lambda$} (B.north)
(D.south) edge node[descr] {$m$} (C.north)
(F.south east) edge node[descr] {$e$} (C.north west)
(E.south) edge node[descr] {$e \ox \kappa$} (D.north)
;
\end{tikzpicture} 
\qquad
\begin{tikzpicture}[descr/.style={fill=white},baseline=(A.base)] 
\node (A) at (0,0) {$B \ox X $};
\node (B) at (6,0) {$I$};
\node (C) at (6,2) {$X$};
\node (D) at (4,2) {$A$};
\node (E) at (2,2) {$A^2 $};
\node (F) at (0,2) {$B\ox X$};
\draw[->] (F.north) to[bend left=20]node[above,scale=0.8] {${\blue \act }$} (C.north west);
\draw[commutative diagrams/.cd, ,font=\scriptsize]
(F.south) edge[commutative diagrams/equal]  (A.north);
\path[->,font=\scriptsize]
(A.east) edge node[above] {${\blue \epsilon_B} \ox {\blue \epsilon_X}$} (B.west)
(F.east) edge node[above] {$e  \ox \kappa$} (E.west)
(E.east) edge node[above] {$m$} (D.west)
(D.east) edge node[above] {$\lambda$} (C.west)
(D.south east) edge node[descr] {${\blue \epsilon_A}$} (B.north west)
(C.south) edge node[descr] {${\blue \epsilon_X}$} (B.north)
;
\end{tikzpicture}
\end{center}
\begin{center}
\begin{tikzpicture}[descr/.style={fill=white},yscale=0.8] 
\node (A) at (-1,0) {$(B \ox X)^2$};
\node (B) at (2.5,0) {$A^4$};
\node (C) at (5,0) {$A^2$};
\node (D) at (7.5,0) {$X^2.$};
\node (E) at (7.5,5) {$X$};
\node (F) at (5,5) {$A $};
\node (G) at (2.5,5) {$A^2$};
\node (H) at (-1,5) {$ B \ox X$};
\node (I) at (-1,2.5) {$B^2 \ox X^2$};
\node (J) at (2.5,2.5) {$A^4$};
\node at (3.75,2.5) {$\eqref{m et delta}$};
\draw[->] (H.north) to[bend left=15]node[above,scale=0.8] {${\blue \act }$} (E.north west);
cla\draw[->] (A.south) to[bend right=15]node[below,scale=0.8] {${\blue \act  \ox \act}$} (D.south west);
\path[->,font=\scriptsize]
(A.east) edge node[above] {$e  \ox k \ox e  \ox k $} (B.west)
(B.east) edge node[above] {$m  \ox m$} (C.west)
(C.east) edge node[above] {$\lambda  \ox \lambda$} (D.west)
(H.east) edge node[above] {$e  \ox k$} (G.west)
(G.east) edge node[above] {$m  $} (F.west)
(F.east) edge node[above] {$\lambda$} (E.west)
(I.east) edge node[above] {$e \ox e \ox k \ox k $} (J.west)
(E.south) edge node[descr] {$\Delta$} (D.north)
(F.south) edge node[descr] {$\Delta$} (C.north)
(H.south) edge node[descr] {$\Delta \ox \Delta$} (I.north)
(G.south) edge node[descr] {$\Delta \ox \Delta$} (J.north)
(J.south) edge node[descr] {$1_A \ox {\blue \sigma_{A,A}} \ox 1_A$} (B.north)
(I.south) edge node[descr] {$1_B \ox {\blue \sigma_{B,X}} \ox 1_X$} (A.north);
\end{tikzpicture} 
\end{center}
Moreover, the condition \eqref{coco} for the particular action $\act = \lambda \cdot m \cdot (e \ox \kappa)$ is exactly the condition (5) in the definition of split extension of bialgebras (Definition \ref{definition split extension}).
\qed
\end{proof}

Consider the diagram 
\begin{equation}\label{diag iso}
\begin{tikzpicture}[descr/.style={fill=white},baseline=(current  bounding  box.center)] 
\node (A) at (0,0) {$A$};
\node (B) at (2.5,0) {$B$};
\node (C) at (-2.5,0) {$X$};
\node (A') at (0,-2) {$X \rtimes B$};
\node (B') at (2.5,-2) {$B$};
\node (C') at (-2.5,-2) {$X$};
\draw [double distance = 3pt] (B.south) -- (B'.north);
\draw [double distance = 3pt] (C.south) -- (C'.north);
\path[dashed,->,font=\scriptsize]
([yshift=2pt]A'.west) edge node[above] {$\pi_1$} ([yshift=2pt]C'.east)
([yshift=2pt]A.west) edge node[above] {$\lambda$} ([yshift=2pt]C.east);
\path[->,font=\scriptsize]
([xshift=-3pt]A.south) edge node[left] {$ \psi$} ([xshift=-3pt]A'.north)
([xshift=3pt]A'.north) edge node[right] {$ \phi$} ([xshift=3pt]A.south)
([yshift=-4pt]C'.east) edge node[below] {$i_1$} ([yshift=-4pt]A'.west)
([yshift=-4pt]A'.east) edge node[below] {$\pi_2$} ([yshift=-4pt]B'.west)
([yshift=2pt]B'.west) edge node[above] {$i_2$} ([yshift=2pt]A'.east)
([yshift=-4pt]C.east) edge node[below] {$\kappa$} ([yshift=-4pt]A.west)
([yshift=-4pt]A.east) edge node[below] {$\alpha$} ([yshift=-4pt]B.west)
([yshift=2pt]B.west) edge node[above] {$e$} ([yshift=2pt]A.east);
\end{tikzpicture}
\end{equation}
where the two rows are split extensions, the top row by definition and the bottom one by Lemma \ref{obs semi direct bialg}, where the action of bialgebras is given by Proposition \ref{Action bialgebra}. The morphisms $\phi$ and $\psi$ {\ro in $\C$} are defined by 
\[ \psi = (\lambda \otimes \alpha) \cdot \Delta, \]
 and
\[\phi = m \cdot (\kappa \otimes e) .\]
In the following lemmas, we prove step by step that $\psi$ and $\phi$ are isomorphisms of split extensions. First, we prove that they are inverse to each other {\ro in $\C$}.
\begin{lemma}\label{inverse to each other}
The {\blue morphisms} $\phi$ and $\psi$ {\blue in $\C$} are inverse to each other. 
\end{lemma}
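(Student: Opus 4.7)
The plan is to verify the two equalities $\phi \cdot \psi = 1_A$ and $\psi \cdot \phi = 1_{X \rtimes B}$ separately. The first is essentially immediate from the axioms; the second requires one round of diagram chasing using the bialgebra compatibility for $m$ and $\Delta$.

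For $\phi \cdot \psi = 1_A$, I simply compose the definitions and obtain
\[ \phi \cdot \psi = m \cdot (\kappa \otimes e) \cdot (\lambda \otimes \alpha) \cdot \Delta = m \cdot ((\kappa \cdot \lambda) \otimes (e \cdot \alpha)) \cdot \Delta, \]
which equals $1_A$ by axiom (3) of Definition \ref{definition split extension}.

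For $\psi \cdot \phi = 1_{X \rtimes B}$, the computation begins by applying the bialgebra axiom \eqref{m et delta} to rewrite $\Delta \cdot m$, yielding
\[ \psi \cdot \phi = (\lambda \otimes \alpha) \cdot (m \otimes m) \cdot (1_A \otimes \sigma_{A,A} \otimes 1_A) \cdot (\Delta \otimes \Delta) \cdot (\kappa \otimes e). \]
Next I push $\kappa$ and $e$ through the comultiplications (they are coalgebra morphisms) and use the naturality of the braiding $\sigma$ to turn $\sigma_{A,A}$ into $\sigma_{X,B}$, obtaining
\[ \psi \cdot \phi = (\lambda \otimes \alpha) \cdot (m \otimes m) \cdot (\kappa \otimes e \otimes \kappa \otimes e) \cdot (1_X \otimes \sigma_{X,B} \otimes 1_B) \cdot (\Delta \otimes \Delta). \]
The key step is to identify the composite $(\lambda \otimes \alpha) \cdot (m \otimes m) \cdot (\kappa \otimes e \otimes \kappa \otimes e)$. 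For the left tensor factor I apply axiom (4), which gives $\lambda \cdot m \cdot (\kappa \otimes e) = 1_X \otimes \epsilon_B$. For the right factor I use that $\alpha$ is an algebra morphism, that $\alpha \cdot \kappa = u_B \cdot \epsilon_X$ and $\alpha \cdot e = 1_B$ by axiom (2), and the unitality \eqref{unital multiplication} of $m$, to conclude $\alpha \cdot m \cdot (\kappa \otimes e) = \epsilon_X \otimes 1_B$. Substituting these back, I get
\[ \psi \cdot \phi = (1_X \otimes \epsilon_B \otimes \epsilon_X \otimes 1_B) \cdot (1_X \otimes \sigma_{X,B} \otimes 1_B) \cdot (\Delta \otimes \Delta), \]
and a short rearrangement (moving the counits past the braiding) collapses this to $((1_X \otimes \epsilon_X) \cdot \Delta) \otimes ((\epsilon_B \otimes 1_B) \cdot \Delta) = 1_X \otimes 1_B$ by the counit axiom \eqref{counit comultiplication}.

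The only delicate point is bookkeeping of the braiding in the middle step, but nothing beyond the naturality of $\sigma$ and the standard bialgebra identities is required; there is no genuine obstacle.
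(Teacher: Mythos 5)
Your proposal is correct and follows essentially the same route as the paper: $\phi\cdot\psi=1_A$ is axiom (3) verbatim, and for $\psi\cdot\phi$ the paper's commutative diagram performs exactly your sequence of steps — \eqref{m et delta} to rewrite $\Delta\cdot m$, naturality of $\sigma$ with the fact that $\kappa$ and $e$ are coalgebra morphisms, axiom (4) for the $\lambda$-component, axiom (2) plus unitality for the $\alpha$-component, and the counit axiom to collapse. Your equational write-up is just a linearization of that diagram.
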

\begin{proof}
We prove by means of the two following diagrams that $\psi \cdot \phi = 1_X \ox 1_B$ and $\phi \cdot \psi = 1_A$  by using the properties of the split extensions. 
\begin{center}
\begin{tikzpicture}[descr/.style={fill=white},yscale=0.7,xscale=0.9] 
\node (A) at (0,0.5) {$ X \ox B $};
\node (B) at (12.5,0.5) {$X \ox B$};
\node (C) at (12.5,5) {$X \ox B$};
\node (D) at (12.5,7.5) {$A^2$};
\node (E) at (12.5,12.5) {$A$};
\node (F) at (6.5,12.5) {$A^2$};
\node (G) at (0,12.5) {$ X \ox B$};
\node (H) at (3,10) {$ X^2 \ox B^2$};
\node (I) at (6.5,10) {$A^4$};
\node (J) at (3,7.5) {$(X \ox B)^2$};
\node (K) at (6.5,7.5) {$A^4$};
\node (L) at (6.5,5) {$X^2 \ox A^2$};
\node (M) at (9.5,5) {$X \ox A$};
\node (N) at (3,2.5) {$(X \ox B)^2$};
\node (O) at (9.5,2.5) {$X^2 \ox B^2$};
\node (X) at (5,4) {$(1)+(2)$};
\node (Y) at (9,6.25) {$(4)$};
\node (Y) at (9,10) {$\eqref{m et delta}$};
\draw[commutative diagrams/.cd, ,font=\scriptsize]
(G.south) edge[commutative diagrams/equal]  (A.north)
(J.south) edge[commutative diagrams/equal]  (N.north)
(C.south) edge[commutative diagrams/equal] (B.north)
(A.east) edge[commutative diagrams/equal]  (B.west);
\draw[->] (E.south east) to[bend left=15]node[right,scale=0.8] {${\blue \psi }$} (C.north east);
\draw[->] (G.north) to[bend left=15]node[above,scale=0.8] {${\blue \phi }$} (E.north);
\path[->,font=\scriptsize]
(G.east) edge node[above] {$\kappa \ox e $} (F.west)
(F.east) edge node[above] {$m$} (E.west)
(H.east) edge node[above] {$\kappa^2 \ox e^2    $} (I.west)
(J.east) edge node[above] {$(\kappa \ox e)^2$} (K.west)
(K.east) edge node[above] {$m \ox m$} (D.west)
(L.east) edge node[above] {$m \ox m$} (M.west)
(M.east) edge node[above] {$1_X \ox \alpha$} (C.west)
(N.east) edge node[below] {$1_X \ox (u_X \cdot {\blue \epsilon_B}) \ox (u_B \cdot {\blue \epsilon_X}) \ox 1_B$} (O.west)
(D.south) edge node[descr] {$\lambda \ox \alpha$} (C.north)
(E.south) edge node[descr] {$\Delta$} (D.north)
(F.south) edge node[descr] {$\Delta \ox \Delta$} (I.north)
(I.south) edge node[descr] {$1_A \ox {\blue \sigma_{A,A}} \ox 1_A$} (K.north)
(H.south) edge node[descr] {$1_X \ox {\blue \sigma_{X,B}} \ox 1_B$} (J.north)
(G.south east) edge node[descr] {$\Delta  \ox \Delta $} (H.north west)
(J.south east) edge node[descr] {$1_X \ox (u_X \cdot {\blue \epsilon_B}) \ox \kappa \ox e$} (L.north west)
(L.south) edge node[descr] {$(1_X)^2 \ox \alpha^2 $} (O.north west)
(O.south east) edge node[descr] {$m \ox m$} (B.north west)
;
\end{tikzpicture} 
\quad
\begin{tikzpicture}[descr/.style={fill=white},scale=0.7] 
\node (A) at (0,0) {$A$};
\node (B) at (2.5,0) {$A^2$};
\node (C) at (5,0) {$X \ox B$};
\node (D) at (5,-2.5) {$A^2$};
\node (E) at (5,-5) {$A.$};
\node at (3,-2) {$(3)$};
\draw[->] (C.south east) to[bend left=15]node[right,scale=0.8] {${\blue \phi} $} (E.north east);
\draw[->] (A.north) to[bend left=15]node[above,scale=0.8] {${\blue \psi }$} (C.north);
\draw [double distance = 3pt] (A.south east) -- (E.north west);
\path[->,font=\scriptsize]
(D.south) edge node[descr] {$m$} (E.north)
(C.south) edge node[descr] {$\kappa \ox e$} (D.north)
(A.east) edge node[above] {$\Delta$} (B.west)
(B.east) edge node[above] {$\lambda \ox \alpha$} (C.west)
;
\end{tikzpicture} 
\end{center}
\qed
\end{proof}
In order to prove that $\psi$ and $\phi$ of \eqref{diag iso} are morphisms of bialgebras, we need the following technical Lemma. Notice that this Lemma will also be convenient to express the action $\act {\blue \coloneqq} \lambda \cdot m \cdot (e \ox \kappa)$ in the particular case of Hopf algebras ({\blue s}ee Remark \ref{action hopf reexpressed}).
\begin{lemma}\label{tech lemma}
Given a split extension 
\begin{tikzpicture}[descr/.style={fill=white},baseline=(A.base)] 
\node (A) at (0,0) {$A$};
\node (B) at (2.5,0) {$B$};
\node (C) at (-2.5,0) {$X$};
\path[dashed,->,font=\scriptsize]
([yshift=2pt]A.west) edge node[above] {$\lambda$} ([yshift=2pt]C.east);
\path[->,font=\scriptsize]
([yshift=-4pt]C.east) edge node[below] {$\kappa$} ([yshift=-4pt]A.west)
([yshift=-4pt]A.east) edge node[below] {$\alpha$} ([yshift=-4pt]B.west)
([yshift=2pt]B.west) edge node[above] {$e$} ([yshift=2pt]A.east);
\end{tikzpicture} 
of bialgebras, we have
 \begin{equation}\label{lemma1.7}
 m \cdot (e \otimes \kappa) = m \cdot (\kappa \otimes e) \cdot (\act \otimes 1_B) \cdot (1_B \otimes {\blue \sigma_{B,X}}) \cdot (\Delta \otimes 1_X),
 \end{equation}
 where $\act = \lambda \cdot m \cdot (e \ox \kappa)$ .
\end{lemma}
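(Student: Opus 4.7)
The strategy is to apply the ``unit-like'' condition~(3) of Definition~\ref{definition split extension}, namely $m \cdot ((\kappa \cdot \lambda) \ox (e \cdot \alpha)) \cdot \Delta = 1_A$, to the morphism $f \coloneqq m \cdot (e \ox \kappa) \colon B \ox X \to A$, and then to simplify the two components.

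First I would observe that $f$ is a morphism of coalgebras: indeed $e$ and $\kappa$ are bialgebra morphisms, and $m \colon A \ox A \to A$ is a coalgebra morphism by axiom~\eqref{m et delta}. Hence
\[
\Delta \cdot f \;=\; (f \ox f) \cdot \Delta_{B \ox X},
\qquad \text{where } \Delta_{B \ox X} = (1_B \ox \sigma_{B,X} \ox 1_X) \cdot (\Delta \ox \Delta).
\]
Pre-composing condition~(3) with $f$ and applying the coalgebra-morphism property yields
\[
f \;=\; m \cdot \bigl((\kappa \cdot \lambda \cdot f) \ox (e \cdot \alpha \cdot f)\bigr) \cdot \Delta_{B \ox X}.
\]

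Next I would evaluate the two composite factors. By the very definition of the action, $\lambda \cdot f = \lambda \cdot m \cdot (e \ox \kappa) = \act$. For the other factor, since $\alpha$ is an algebra morphism and $\alpha \cdot e = 1_B$, $\alpha \cdot \kappa = u_B \cdot \epsilon_X$, we get
\[
\alpha \cdot f \;=\; m \cdot (\alpha \ox \alpha) \cdot (e \ox \kappa) \;=\; m \cdot (1_B \ox u_B) \cdot (1_B \ox \epsilon_X) \;=\; 1_B \ox \epsilon_X
\]
after using the unitality~\eqref{unital multiplication} of $m$ on the $B$-side. Hence $e \cdot \alpha \cdot f = e \cdot (1_B \ox \epsilon_X)$.

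Substituting these into the expression for $f$ gives
\[
f \;=\; m \cdot (\kappa \ox e) \cdot (\act \ox 1_B) \cdot (1_{B \ox X} \ox 1_B \ox \epsilon_X) \cdot (1_B \ox \sigma_{B,X} \ox 1_X) \cdot (\Delta \ox \Delta).
\]
The final step, which will be the main (but routine) bookkeeping obstacle, is to collapse the tail of this composite: by naturality of the braiding $\sigma$, the morphism $(1_{B \ox X} \ox 1_B \ox \epsilon_X) \cdot (1_B \ox \sigma_{B,X} \ox 1_X)$ equals $(1_B \ox \sigma_{B,X}) \cdot (1_B \ox 1_B \ox 1_X \ox \epsilon_X)$, and then counitality~\eqref{counit comultiplication} applied on the $X$-factor gives $(1_X \ox \epsilon_X) \cdot \Delta = 1_X$. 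The remaining expression is precisely
\[
m \cdot (\kappa \ox e) \cdot (\act \ox 1_B) \cdot (1_B \ox \sigma_{B,X}) \cdot (\Delta \ox 1_X),
\]
which completes the proof.
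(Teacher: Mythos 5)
Your argument is correct and is essentially the paper's own proof written out linearly: the paper's single large commutative diagram likewise post-composes condition (3) with $m \cdot (e \ox \kappa)$, uses the bialgebra axiom \eqref{m et delta} together with the fact that $e$ and $\kappa$ are coalgebra morphisms (i.e.\ that $m \cdot (e \ox \kappa)$ is a coalgebra morphism) to push $\Delta$ through, identifies $\lambda \cdot m \cdot (e \ox \kappa) = \act$ and $\alpha \cdot m \cdot (e \ox \kappa) = 1_B \ox \epsilon_X$ via conditions (1) and (2), and collapses the remaining tail with unitality \eqref{unital multiplication} and counitality \eqref{counit comultiplication}. No gaps; the steps you flag as routine bookkeeping are exactly the ones the paper handles inside the diagram.
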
 
\begin{proof} 
The equality of the lemma is proven thanks to the commutativity of the diagram below, where we use that $\alpha$, $\kappa$ and $e$ are morphisms of bialgebras. 
\begin{center}
\begin{tikzpicture}[descr/.style={fill=white},yscale=0.8,xscale=1.3] 
\node (A) at (0,0) {$ A \ox B $};
\node (B) at (3,0) {$X \ox B$};
\node (C) at (6,0) {$A^2$};
\node (D) at (9,0) {$A$};
\node (E) at (9,2.5) {$A^2$};
\node (F) at (9,5) {$X \ox B$};
\node (G) at (9,7.5) {$A^2$};
\node (H) at (9,10) {$A$};
\node (I) at (6,10) {$A^2$};
\node (J) at (3,10) {$B \ox X$};
\node (K) at (0,10) {$B \ox X$};
\node (L) at (0,7.5) {$B^2 \ox X$};
\node (M) at (0,5) {$B \ox X \ox B$};
\node (N) at (0,2.5) {$A^2 \ox B$};
\node (O) at (3,7.5) {$B \ox X \ox B \ox X$};
\node (P) at (6,7.5) {$A^4$};
\node (Q) at (3,5) {$A^2 \ox B \ox X$};
\node (R) at (6,5) {$A \ox B^2$};
\node at (10,6) {$(3)$};
\node at (4.5,2.5) {$\eqref{unital multiplication}+\eqref{counit comultiplication}$};
\node at (4.5,6.75) {$(1) + (2)$};
\node at (8,9) {$\eqref{m et delta}$};
\draw[double distance = 2pt] (H.south east) to[bend right=330] (D.north east);
\draw[commutative diagrams/.cd, ,font=\scriptsize]
(K.east) edge[commutative diagrams/equal]  (J.west);
\path[->,font=\scriptsize]
(A.east) edge node[above] {$\lambda \ox 1_B $} (B.west)
(B.east) edge node[above] {$\kappa \ox e$} (C.west)
(C.east) edge node[above] {$m   $} (D.west)
(J.east) edge node[above] {$e \ox \kappa$} (I.west)
(I.east) edge node[above] {$m $} (H.west)
(O.east) edge node[above] {$(e \ox \kappa)^2$} (P.west)
(P.east) edge node[above] {$m\ox m$} (G.west)
(Q.east) edge node[above] {$m \ox 1_B \ox (u_B \cdot {\blue \epsilon_X}) $} (R.west)
(R.east) edge node[above] {$\lambda \ox m$} (F.west)
(E.south) edge node[descr] {$m$} (D.north)
(F.south) edge node[descr] {$\kappa \ox e$} (E.north)
(G.south) edge node[descr] {$\lambda \ox \alpha$} (F.north)
(H.south) edge node[descr] {$\Delta$} (G.north)
(K.south) edge node[left] {$\Delta \ox 1_X$} (L.north)
(L.south) edge node[descr] {$1_B \ox {\blue \sigma_{B,X}}$} (M.north)
(M.south) edge node[descr] {$e \ox \kappa \ox 1_B $} (N.north)
(N.south) edge node[descr] {$m \ox 1_B $} (A.north)
(J.south) edge node[descr] {$(1_B \ox {\blue \sigma_{B,X}} \ox 1_X) \cdot(\Delta \ox \Delta) \; \; \; \; \; \; \; \; \; \; \; \; \; \; \; \; \; \; \; $} (O.north)
(I.south) edge node[descr] {$(1_A \ox {\blue \sigma_{A,A}} \ox 1_A)\cdot(\Delta \ox \Delta)$} (P.north)
(O.south) edge node[descr] {$e \ox \kappa \ox 1_B\ox 1_X$} (Q.north)
(P.south) edge node[descr] {$m \ox \alpha \ox \alpha$} (R.north)
;
\end{tikzpicture} 
\end{center}
\qed
\end{proof}
\begin{lemma}
$\phi {\blue \coloneqq} m \cdot (\kappa \ox e) \colon X \ox B \rightarrow A$ is a morphism of bialgebras.
\end{lemma}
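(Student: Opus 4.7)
The plan is to verify that $\phi$ preserves the unit, counit, comultiplication and multiplication of the bialgebra structures. The first three verifications are routine consequences of the bialgebra axioms on $A$ together with the fact that $\kappa$ and $e$ are bialgebra morphisms, whereas preservation of the multiplication is the substantive step, which I reduce to Lemma \ref{tech lemma} and the partial associativity conditions $(6)$, $(7)$, $(8)$ of Definition \ref{definition split extension}.

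For the unit, $\phi \cdot u_{X \rtimes B} = m \cdot (\kappa \ox e) \cdot (u_X \ox u_B) = m \cdot (u_A \ox u_A) = u_A$, using unit preservation of $\kappa$ and $e$ together with the unit axiom \eqref{unital multiplication}. For the counit, $\epsilon_A \cdot \phi = (\epsilon_A \ox \epsilon_A) \cdot (\kappa \ox e) = \epsilon_X \ox \epsilon_B = \epsilon_{X \rtimes B}$ by \eqref{m et epsilon} and counit preservation of $\kappa$ and $e$. For the comultiplication, I apply in succession the bialgebra axiom \eqref{m et delta} on $A$, the coalgebra morphism property of $\kappa$ and $e$, and the naturality of $\sigma$:
\begin{align*}
\Delta \cdot \phi
&= (m \ox m) \cdot (1_A \ox \sigma_{A,A} \ox 1_A) \cdot (\Delta \ox \Delta) \cdot (\kappa \ox e) \\
&= (m \ox m) \cdot (1_A \ox \sigma_{A,A} \ox 1_A) \cdot (\kappa^2 \ox e^2) \cdot (\Delta \ox \Delta) \\
&= (m \ox m) \cdot (\kappa \ox e \ox \kappa \ox e) \cdot (1_X \ox \sigma_{X,B} \ox 1_B) \cdot (\Delta \ox \Delta) \\
&= (\phi \ox \phi) \cdot \Delta_{X \rtimes B}.
\end{align*}

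Preservation of the multiplication is the main obstacle, since $A$ is not assumed associative and the reassociations must be justified explicitly. Informally, in Sweedler-style notation with juxtaposition denoting $m$, the identity to prove reads
\[
(\kappa(x)\,e(b))(\kappa(x')\,e(b')) \;=\; \kappa\bigl(x\,(b_{(1)} \act x')\bigr)\,e(b_{(2)}\,b'),
\]
the right-hand side being precisely $\phi \cdot m_{X \rtimes B}$ applied to $x \ox b \ox x' \ox b'$ by definition of $m_{X \rtimes B}$. Starting from the left-hand side, my plan is to reassociate step by step: condition $(6)$ rewrites it as $\kappa(x)\bigl(e(b)\,(\kappa(x')\,e(b'))\bigr)$; condition $(8)$ gives $\kappa(x)\bigl((e(b)\,\kappa(x'))\,e(b')\bigr)$; Lemma \ref{tech lemma} replaces the subterm $e(b)\,\kappa(x')$ by $\kappa(b_{(1)} \act x')\,e(b_{(2)})$; condition $(7)$ yields $\kappa(x)\bigl(\kappa(b_{(1)} \act x')\,(e(b_{(2)})\,e(b'))\bigr)$; and a final application of condition $(8)$, combined with the algebra morphism property of $\kappa$ and $e$, collapses this to the desired right-hand side. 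The rigorous version is a single large commutative diagram whose faces are exactly these moves; the main challenge will be the careful bookkeeping of tensor factors and of the braidings $\sigma_{B,X}$ that enter both through the definition of $m_{X \rtimes B}$ and through Lemma \ref{tech lemma}.
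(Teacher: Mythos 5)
Your argument is correct and follows essentially the same route as the paper: the coalgebra part is the same application of \eqref{m et delta} together with the coalgebra-morphism property of $\kappa$, $e$ and naturality of the braiding, and the algebra part is the same combination of Lemma \ref{tech lemma} with the partial associativity conditions $(6)$, $(7)$, $(8)$. The paper's Figures \ref{DiagramA'}, \ref{DiagramB'} and \ref{phi-alg} encode exactly your chain of rebracketings, up to a harmless difference in which intermediate bracketings are visited.
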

\begin{proof}
We use the partial associativity of the split extensions to check that Figure \ref{DiagramA'} commutes.
\begin{figure}[b]
\centering
\begin{tikzpicture}[descr/.style={fill=white},baseline=(A.base),yscale=0.7,xscale=0.9] 
\node (A) at (-2,0) {$ X \ox B \ox X \ox B $};
\node (B) at (2.5,0) {$A^4$};
\node (C) at (5,0) {$A^3$};
\node (X) at (7.5,0) {$A^2$};
\node (D) at (10,0) {$A$};
\node (I) at (-2,5) {$ X \ox B \ox X \ox B $};
\node (H) at (3,5) {$A^4$};
\node (G) at (6,5) {$A^3$};
\node (F) at (10,5) {$A^2$};
\node (E) at (10,2.5) {$A$};
\node (J) at (-2,2.5) {$ X \ox B \ox X \ox B $};
\node (K) at (2.5,2.5) {$A^4$};
\node (L) at (5,2.5) {$A^3$};
\node (M) at (7.5,2.5) {$A^2$};
\node (Y) at (5,3.75) {$(6)$};
\node (Z) at (5,1.25) {$(8)$};
\draw[commutative diagrams/.cd, ,font=\scriptsize]
(I.south) edge[commutative diagrams/equal]  (J.north)
(J.south) edge[commutative diagrams/equal]  (A.north)
(E.south) edge[commutative diagrams/equal]  (D.north)
;
\path[->,font=\scriptsize]
(A.east) edge node[above] {$(\kappa \ox e)^2 $} (B.west)
(B.east) edge node[above] {$m \ox 1_A \ox 1_A$} (C.west)
(C.east) edge node[above] {$1_A \ox m$} (X.west)
(X.east) edge node[above] {$m$} (D.west)
(I.east) edge node[above] {$(\kappa \ox e)^2 $} (H.west)
(H.east) edge node[above] {$1_A \ox m \ox 1_A$} (G.west)
(G.east) edge node[above] {$m \ox 1_A$} (F.west)
(J.east) edge node[above] {$(\kappa \ox e)^2 $} (K.west)
(K.east) edge node[above] {$m \ox 1_A \ox 1_A$} (L.west)
(L.east) edge node[above] {$ m\ox 1_A $} (M.west)
(M.east) edge node[above] {$m$} (E.west)
(F.south) edge node[descr] {$m$} (E.north)
;
\end{tikzpicture} 
\caption{First sub-diagram in the proof that $\phi$ is a morphism of algebras}
\label{DiagramA'}
\end{figure}
Moreover, Figure \ref{DiagramB'} commutes since $e$ and $\kappa$ are (bi)algebra morphisms. \begin{figure}
\centering
\begin{tikzpicture}[descr/.style={fill=white},baseline=(A.base),yscale=0.8] 
\node (A) at (0,0) {$ X \ox B$};
\node (B) at (3,0) {$A^2$};
\node (C) at (9,0) {$A$};
\node (D) at (9,5) {$A^2$};
\node (E) at (9,10) {$A^2 $};
\node (F) at (6,10) {$A^3$};
\node (G) at (3,10) {$A^4$};
\node (H) at (0,10) {$X^2 \ox B^2$};
\node (I) at (0,7.5) {$X^2 \ox B^2$};
\node (J) at (0,5) {$X \ox B^2$};
\node (K) at (0,2.5) {$X \ox B^2$};
\node (L) at (3,7.5) {$A^4$};
\node (M) at (6,7.5) {$A^3$};
\node (N) at (3,5) {$A^3$};
\node (O) at (3,2.5) {$A^3$};
\node (Y) at (6,2.5) {$(6)$};
\node (Z) at (3,8.75) {$(7)$};
\draw[commutative diagrams/.cd, ,font=\scriptsize]
(H.south) edge[commutative diagrams/equal]  (I.north)
(J.south) edge[commutative diagrams/equal]  (K.north)
(E.south) edge[commutative diagrams/equal]  (D.north)
;
\draw[->] (H.south west) to[bend left=345] node[descr,scale=0.8] {${\blue m \ox m}$} (A.west);
\path[->,font=\scriptsize]
(A.east) edge node[above] {$\kappa \ox e$} (B.west)
(B.east) edge node[above] {$m$} (C.west)
(K.east) edge node[above] {$\kappa \ox e^2$} (O.west)
(J.east) edge node[above] {$\kappa \ox e^2$} (N.west)
(I.east) edge node[above] {$\kappa^2  \ox e^2$} (L.west)
(H.east) edge node[above] {$\kappa^2 \ox e^2$} (G.west)
(G.east) edge node[above] {$1_A \ox m \ox 1_A$} (F.west)
(F.east) edge node[above] {$m \ox 1_A$} (E.west)
(N.east) edge node[above] {$m \ox 1_A$} (D.west)
(L.east) edge node[above] {$m \ox 1_A \ox 1_A$} (M.west)
(D.south) edge node[descr] {$m$} (C.north)
(K.south) edge node[descr] {$1_X \ox m$} (A.north)
(I.south) edge node[descr] {$m \ox 1_B \ox 1_B$} (J.north)
(L.south) edge node[descr] {$m \ox 1_A \ox 1_A$} (N.north)
(O.south) edge node[descr] {$1_A \ox m$} (B.north)
(M.north east) edge node[descr] {$m \ox 1_A$} (E.south west)
;
\end{tikzpicture}
\caption{Second sub-diagram in the proof that $\phi$ is a morphism of algebras}
\label{DiagramB'}
\end{figure}
Thanks to Figure \ref{DiagramA'} and \ref{DiagramB'} and Lemma \ref{tech lemma} we have Figure \ref{phi-alg} showing that $\phi {\blue \coloneqq} m \cdot (\kappa \ox e)$ is a morphism of algebras.
\begin{figure}
\centering
\begin{tikzpicture}[descr/.style={fill=white},xscale=1.2, yscale=0.6] 
\node (A) at (0,0) {$ A^4 $};
\node (B) at (6,0) {$A^2$};
\node (C) at (12,0) {$A$};
\node (D) at (12,3) {$A^2$};
\node (E) at (12,6) {$X \ox B$};
\node (F) at (12,9) {$X^2 \ox B^2$};
\node (G) at (8,9) {$X \ox B \ox X \ox B^2$};
\node (H) at (4,9) {$X \ox B^2 \ox X \ox B$};
\node (I) at (0,9) {$(X \ox B)^2$};
\node (J) at (4,6) {$A^4$};
\node (K) at (6,3) {$A^3$};
\node (L) at (8,6) {$A^4$};
\node (M) at (8,3) {$A^2$};
\node (X) at (6,7) {$\eqref{lemma1.7}$};
\node (Y) at (3,3) {$Fig. \; \ref{DiagramA'}$};
\node (Z) at (9.5,5) {$Fig. \; \ref{DiagramB'}$};
\path[->,font=\scriptsize]
(A.east) edge node[above] {$m \ox m $} (B.west)
(B.east) edge node[above] {$m $} (C.west)
(I.east) edge node[above] {$1_X \ox \Delta \ox 1_X \ox 1_B $} (H.west)
(H.east) edge node[above] {$1_X \ox 1_B \ox {\blue \sigma_{B,X}} \ox 1_B $} (G.west)
(G.east) edge node[above] {$1_X \ox \act \ox 1_B \ox 1_B $} (F.west)
(K.east) edge node[above] {$m \ox 1_A $} (M.west)
(E.south) edge node[descr] {$\kappa \ox e$} (D.north)
(F.south) edge node[descr] {$m \ox m $} (E.north)
(D.south) edge node[descr] {$m$} (C.north)
(I.south) edge node[descr] {$(\kappa \ox e)^2$} (A.north)
(I.south east) edge node[descr] {$(\kappa \ox e)^2 $} (J.west)
(J.south east) edge node[descr] {$1_A \ox m \ox 1_A$} (K.north west)
(M.south east) edge node[descr] {$m$} (C.north west)
(F.south west) edge node[descr] {$\kappa^2 \ox e^2$} (L.east)
(L.south west) edge node[descr] {$1_A \ox m \ox 1_A$} (K.north east)
;
\end{tikzpicture} 
\caption{$\phi$ is a morphism of algebras}
\label{phi-alg}
\end{figure}

Finally, $\phi$ is a morphism of bialgebras since it is also a morphism of coalgebras:
 \begin{equation}
\begin{tikzpicture}[descr/.style={fill=white},baseline=(current  bounding  box.center),scale=0.8] 
\node (A) at (-2,0) {$ X \ox B \ox X \ox B $};
\node (B) at (2.5,0) {$A^4$};
\node (C) at (5,0) {$A^2$};
\node (D) at (5,5) {$A$};
\node (E) at (2.5,5) {$A^2$};
\node (F) at (-2,5) {$ X \ox B$};
\node (G) at (-2,2.5) {$ X\ox X \ox B  \ox B $};
\node (H) at (2.5,2.5) {$A^4$};
\node at (3.75,2.5) {$\eqref{m et delta}$};
\path[->,font=\scriptsize]
(A.east) edge node[above] {$(\kappa \ox e)^2$} (B.west)
(B.east) edge node[above] {$m \ox m$} (C.west)
(F.east) edge node[above] {$\kappa \ox e$} (E.west)
(E.east) edge node[above] {$ m $} (D.west)
(G.east) edge node[above] {$\kappa^2 \ox e^2$} (H.west)
(F.south) edge node[descr] {$\Delta \ox \Delta$} (G.north)
(G.south) edge node[descr] {$1_X \ox {\blue \sigma_{X,B}} \ox 1_B $} (A.north)
(E.south) edge node[descr] {$\Delta \ox \Delta$} (H.north)
(H.south) edge node[descr] {$1_A \ox {\blue \sigma_{A,A}} \ox 1_A $} (B.north)
(D.south) edge node[descr] {$\Delta$} (C.north)
;
\end{tikzpicture}
 \end{equation}
 This diagram commutes since $e$ and $\kappa$ are coalgebras morphisms.
\qed
\end{proof}
From this Lemma and Lemma \ref{inverse to each other}, it is straightforward that $\psi$ is also a morphism of bialgebras, and it brings us to the following useful proposition.
\begin{proposition}\label{canonical}
The diagram \eqref{diag iso} 
\begin{center}
\begin{tikzpicture}[descr/.style={fill=white},baseline=(A'.base)] 
\node (A) at (0,0) {$A$};
\node (B) at (2.5,0) {$B$};
\node (C) at (-2.5,0) {$X$};
\node (A') at (0,-2) {$X \rtimes B$};
\node (B') at (2.5,-2) {$B$};
\node (C') at (-2.5,-2) {$X$};
\draw [double distance = 3pt] (B.south) -- (B'.north);
\draw [double distance = 3pt] (C.south) -- (C'.north);
\path[dashed,->,font=\scriptsize]
([yshift=2pt]A'.west) edge node[above] {$\pi_1$} ([yshift=2pt]C'.east)
([yshift=2pt]A.west) edge node[above] {$\lambda$} ([yshift=2pt]C.east);
\path[->,font=\scriptsize]
([xshift=-3pt]A.south) edge node[left] {$ \overset{\psi}{\cong}$} ([xshift=-3pt]A'.north)
([xshift=3pt]A'.north) edge node[right] {$\overset{\phi}{\cong}$} ([xshift=3pt]A.south)
([yshift=-4pt]C'.east) edge node[below] {$i_1$} ([yshift=-4pt]A'.west)
([yshift=-4pt]A'.east) edge node[below] {$\pi_2$} ([yshift=-4pt]B'.west)
([yshift=2pt]B'.west) edge node[above] {$i_2$} ([yshift=2pt]A'.east)
([yshift=-4pt]C.east) edge node[below] {$\kappa$} ([yshift=-4pt]A.west)
([yshift=-4pt]A.east) edge node[below] {$\alpha$} ([yshift=-4pt]B.west)
([yshift=2pt]B.west) edge node[above] {$e$} ([yshift=2pt]A.east);
\end{tikzpicture} 
\end{center}
{\blue in the category of bialgebras in $\C$ commutes}. Accordingly, $(1_B, 1_X, \psi)$ and $(1_B, 1_X, \phi)$ are {\green isomorphisms of split extensions of bialgebras.}
\end{proposition}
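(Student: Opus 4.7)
The plan is to package together the lemmas that have already been proved. By the lemma immediately preceding this proposition, $\phi = m \cdot (\kappa \otimes e) \colon X \rtimes B \to A$ is a morphism of bialgebras, and by Lemma \ref{inverse to each other} the morphism $\psi = (\lambda \otimes \alpha) \cdot \Delta$ is the two-sided inverse of $\phi$ in $\C$. Since the inverse of a bialgebra isomorphism is automatically a bialgebra morphism (its compatibility with $m$, $u$, $\Delta$, $\epsilon$ is deduced by composing on both sides with $\phi$ or $\phi \otimes \phi$), this already gives that $\psi$ is a morphism of bialgebras and that $\phi$, $\psi$ are mutually inverse bialgebra isomorphisms.

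Next I invoke Corollary \ref{comm}: to exhibit $(1_B, 1_X, \phi)$ as a morphism of split extensions from the lower row to the upper row of \eqref{diag iso}, it suffices to check that $\phi \cdot i_1 = \kappa \cdot 1_X$ and $\phi \cdot i_2 = e \cdot 1_B$. These are immediate calculations: using that $e$ and $\kappa$ preserve units and the unitality \eqref{unital multiplication} of $m$,
\begin{align*}
\phi \cdot i_1 &= m \cdot (\kappa \otimes e) \cdot (1_X \otimes u_B) = m \cdot (\kappa \otimes u_A) = \kappa,\\
\phi \cdot i_2 &= m \cdot (\kappa \otimes e) \cdot (u_X \otimes 1_B) = m \cdot (u_A \otimes e) = e.
\end{align*}
Symmetrically, for $(1_B, 1_X, \psi)$ one checks $\psi \cdot \kappa = i_1$ and $\psi \cdot e = i_2$. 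Using that $\kappa$ is a coalgebra morphism, the split-extension axioms $\lambda \cdot \kappa = 1_X$ and $\alpha \cdot \kappa = u_B \cdot \epsilon_X$, and the counit axiom \eqref{counit comultiplication}, one computes
\[
\psi \cdot \kappa = (\lambda \otimes \alpha) \cdot (\kappa \otimes \kappa) \cdot \Delta = (1_X \otimes (u_B \cdot \epsilon_X)) \cdot \Delta = 1_X \otimes u_B = i_1,
\]
and the identity $\psi \cdot e = i_2$ is obtained in the same way from $\lambda \cdot e = u_X \cdot \epsilon_B$ and $\alpha \cdot e = 1_B$.

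The remaining squares in \eqref{diag iso} (namely $\pi_1 \cdot \psi = \lambda$, $\pi_2 \cdot \psi = \alpha$, $\lambda \cdot \phi = \pi_1$, $\alpha \cdot \phi = \pi_2$) are then free, delivered by Corollary \ref{comm}. I do not expect any real obstacle here: the substantive work (the bialgebra-morphism status of $\phi$, the invertibility, and the joint-epicness that underlies Corollary \ref{comm}) is already done in the preceding lemmas, so the present proposition is essentially a bookkeeping step that records these facts as the existence of a canonical isomorphism of split extensions.
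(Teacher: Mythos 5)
Your proposal is correct and follows essentially the same route as the paper: the paper likewise combines the preceding lemma ($\phi$ is a bialgebra morphism), Lemma \ref{inverse to each other}, and Corollary \ref{comm}, and then verifies exactly the same four identities $\phi \cdot i_1 = \kappa$, $\phi \cdot i_2 = e$, $\psi \cdot \kappa = i_1$, $\psi \cdot e = i_2$ by the same unit/counit computations. No gaps.
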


\begin{proof}
{\green Thanks to Proposition \ref{comm}, it is enough to check that the four following diagrams of \eqref{diag iso} commute to conclude that $\phi$ and $\psi$ are morphisms of split extensions of bialgebras.} We recall that $\psi = (\lambda \ox \alpha) \cdot \Delta$ and $\phi = m \cdot (\kappa \ox e)$.
\begin{center}
\begin{tikzpicture}[descr/.style={fill=white},baseline=(A.base),scale=0.8] 
\node (A) at (0,0) {$ B  $};
\node (B) at (2.5,0) {$A$};
\node (C) at (2.5,2) {$A^2$};
\node (D) at (2.5,4) {$X \ox B$};
\node (E) at (0,4) {$B$};
\draw[commutative diagrams/.cd, ,font=\scriptsize]
(E.south) edge[commutative diagrams/equal]  (A.north);
\path[->,font=\scriptsize]
(A.east) edge node[above] {$e$} (B.west)
(E.east) edge node[above] {$ u_X \ox 1_B $} (D.west)
(D.south) edge node[descr] {$\kappa \ox e$} (C.north)
(C.south) edge node[descr] {$m $} (B.north)
(E.south east) edge node[descr] {$u_A \ox e$} (C.west)
;
\end{tikzpicture} 
\qquad
\begin{tikzpicture}[descr/.style={fill=white},baseline=(A.base),scale=0.8] 
\node (A) at (0,0) {$ X  $};
\node (B) at (2.5,0) {$A$};
\node (C) at (2.5,2) {$A^2$};
\node (D) at (2.5,4) {$X \ox B$};
\node (E) at (0,4) {$X$};
\draw[commutative diagrams/.cd, ,font=\scriptsize]
(E.south) edge[commutative diagrams/equal]  (A.north);
\path[->,font=\scriptsize]
(A.east) edge node[above] {$\kappa$} (B.west)
(E.east) edge node[above] {$ 1_X \ox u_B $} (D.west)
(D.south) edge node[descr] {$\kappa \ox e$} (C.north)
(C.south) edge node[descr] {$m $} (B.north)
(E.south east) edge node[descr] {$\kappa \ox u_A$} (C.west)
;
\end{tikzpicture}
\qquad
\begin{tikzpicture}[descr/.style={fill=white},scale=0.7,xscale=0.8] 
\node (A) at (0,0) {$ X $};
\node (B) at (5,0) {$X \ox B $};
\node (D) at (5,4) {$A^2$};
\node (E) at (2.5,4) {$A$};
\node (F) at (0,4) {$ X  $};
\node (G) at (2.5,2) {$X^2$};
\draw[commutative diagrams/.cd, ,font=\scriptsize]
(F.south) edge[commutative diagrams/equal]  (A.north);
\path[->,font=\scriptsize]
(A.east) edge node[above] {$1_X \ox u_B$} (B.west)
(E.east) edge node[above] {$ \Delta $} (D.west)
(G.north east) edge node[descr] {$ \kappa \ox \kappa $} (D.south west)
(F.east) edge node[above] {$ \kappa$} (E.west)
(D.south) edge node[descr] {$\lambda \ox \alpha$} (B.north)
(F.south east) edge node[descr] {$\Delta $} (G.north west)
(G) edge node[descr,xshift=-15pt] {$1_X \ox (u_B \cdot {\blue \epsilon_X}) $} (B.north west)
;
\end{tikzpicture}
\qquad
\begin{tikzpicture}[descr/.style={fill=white},
xscale=0.8,scale=0.7] 
\node (A) at (0,0) {$ B $};
\node (B) at (5,0) {$X \ox B $};
\node (D) at (5,4) {$A^2$};
\node (E) at (2.5,4) {$A$};
\node (F) at (0,4) {$ B  $};
\node (G) at (2.5,2) {$B^2$};
\draw[commutative diagrams/.cd, ,font=\scriptsize]
(F.south) edge[commutative diagrams/equal]  (A.north);
\path[->,font=\scriptsize]
(A.east) edge node[above] {$u_X \ox 1_B$} (B.west)
(E.east) edge node[above] {$ \Delta $} (D.west)
(G.north east) edge node[descr] {$ e \ox e $} (D.south west)
(F.east) edge node[above] {$ e$} (E.west)
(D.south) edge node[descr] {$\lambda \ox \alpha$} (B.north)
(F.south east) edge node[descr] {$\Delta $} (G.north west)
(G) edge node[descr,xshift=-15pt] {$(u_X \cdot {\blue \epsilon_B}) \ox 1_B $} (B.north west)
;
\end{tikzpicture} 

\end{center}
\qed
\end{proof}

\begin{proposition}
Given a split extension 
\begin{tikzpicture}[descr/.style={fill=white},baseline=(A.base)] 
\node (A) at (0,0) {$A$};
\node (B) at (2.5,0) {$B$};
\node (C) at (-2.5,0) {$X$};
\path[dashed,->,font=\scriptsize]
([yshift=2pt]A.west) edge node[above] {$\lambda$} ([yshift=2pt]C.east);
\path[->,font=\scriptsize]
([yshift=-4pt]C.east) edge node[below] {$\kappa$} ([yshift=-4pt]A.west)
([yshift=-4pt]A.east) edge node[below] {$\alpha$} ([yshift=-4pt]B.west)
([yshift=2pt]B.west) edge node[above] {$e$} ([yshift=2pt]A.east);
\end{tikzpicture} 
of bialgebras, the following properties hold:
\begin{itemize}
\item[a)] $\kappa$ is the kernel of $\alpha$ in the category of bialgebras {\blue in $\C$};
\item[b)] $\alpha$ is the cokernel of $\kappa$ in the category of bialgebras {\blue in $\C$};
{\blue \item[c)] $e$ is the kernel of $\lambda$ in the category of pointed coalgebras in $\C$. }
\end{itemize}
{\blue Hence, a split extension of bialgebras is a short exact sequence.}
\end{proposition}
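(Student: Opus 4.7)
The plan is to prove parts (a) and (b) in the category of bialgebras, and part (c) in the category of pointed coalgebras, exploiting the decomposition of the identity of $A$ given by condition (3) of Definition \ref{definition split extension}. The zero object is $I$ in all three cases, and the zero morphism between two such objects $Y$ and $Z$ is $u_Z\cdot\epsilon_Y$.

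For part (a), given a bialgebra morphism $f\col Y\to A$ with $\alpha\cdot f = u_B\cdot\epsilon_Y$, the candidate factorization through $\kappa$ is $g \coloneqq \lambda\cdot f\col Y\to X$. First I would check that $g$ is a bialgebra morphism: it preserves the comultiplication and the unit because $\lambda$ does by (9), and $f$ is a bialgebra morphism. The delicate point is that $g$ must preserve the multiplication, and here I would invoke Proposition \ref{prop lamda morph}, which rewrites $\lambda\cdot m_A$ in terms of $\lambda$ and the idempotents $e\cdot\alpha$ and $\kappa\cdot\lambda$: pre-composing with $f\ox f$ and using $e\cdot\alpha\cdot f = e\cdot u_B\cdot\epsilon_Y = u_A\cdot\epsilon_Y$, the middle factor collapses by the unit axiom \eqref{unital multiplication} and counit axiom \eqref{counit comultiplication}, yielding $g\cdot m_Y = m_X\cdot(g\ox g)$. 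To prove $\kappa\cdot g = f$, I would apply condition (3) to $f$: since $\Delta\cdot f = (f\ox f)\cdot\Delta_Y$ and $e\cdot\alpha\cdot f = u_A\cdot\epsilon_Y$, the expression $m\cdot((\kappa\lambda)\ox(e\alpha))\cdot\Delta\cdot f$ reduces via the right unit/counit to $\kappa\cdot\lambda\cdot f = \kappa\cdot g$, which equals $f$ by (3). Uniqueness is immediate: if $f = \kappa\cdot g'$, then $g' = \lambda\cdot\kappa\cdot g' = \lambda\cdot f = g$ by condition (1).

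For part (b), given a bialgebra morphism $h\col A\to C$ with $h\cdot\kappa = u_C\cdot\epsilon_X$, I would take $k\coloneqq h\cdot e\col B\to C$, which is a bialgebra morphism as a composite. To prove $k\cdot\alpha = h$, apply $h$ to condition (3) and use that $h$ is an algebra morphism: $h = m_C\cdot((h\kappa\lambda)\ox(he\alpha))\cdot\Delta_A$. Since $\lambda$ is a coalgebra morphism by (9) we have $\epsilon_X\cdot\lambda = \epsilon_A$, hence $h\cdot\kappa\cdot\lambda = u_C\cdot\epsilon_X\cdot\lambda = u_C\cdot\epsilon_A$; then the left unit axiom collapses the first factor and the counit axiom gives $h = h\cdot e\cdot\alpha = k\cdot\alpha$. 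Uniqueness follows from $\alpha\cdot e = 1_B$: if $k'\cdot\alpha = h$, then $k' = k'\cdot\alpha\cdot e = h\cdot e = k$.

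For part (c), in the category of pointed coalgebras the zero morphism is $u_Z\cdot\epsilon_Y$, and condition (2) gives $\lambda\cdot e = u_X\cdot\epsilon_B$. Given a pointed coalgebra morphism $f\col Y\to A$ with $\lambda\cdot f = u_X\cdot\epsilon_Y$, the candidate is $k\coloneqq\alpha\cdot f$, a pointed coalgebra morphism as a composite. Applying condition (3) to $f$: since $\kappa$ preserves the unit, $\kappa\cdot\lambda\cdot f = \kappa\cdot u_X\cdot\epsilon_Y = u_A\cdot\epsilon_Y$, and by the left unit and counit axioms the formula collapses to $f = e\cdot\alpha\cdot f = e\cdot k$. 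Uniqueness again follows from $\alpha\cdot e = 1_B$. Finally, combining (a) and (b) gives the short exact sequence statement. The only genuine obstacle is the verification in (a) that $\lambda\cdot f$ is an algebra morphism, which requires the somewhat involved Proposition \ref{prop lamda morph}; the rest of the argument is a clean application of the ``splitting of the identity'' expressed by axiom (3), together with the unit and counit axioms to kill the ``zero'' factor.
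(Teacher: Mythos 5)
Your proposal is correct and follows essentially the same route as the paper: the factorizations $\lambda\cdot f$, $h\cdot e$ and $\alpha\cdot f$ are exactly the ones used there, with axiom (3) plus the unit/counit axioms collapsing the ``zero'' factor, and Proposition \ref{prop lamda morph} handling the multiplicativity of $\lambda\cdot f$ in part (a). The only (immaterial) difference is in (b), where you verify $k\cdot\alpha=h$ by applying $h$ directly to axiom (3), whereas the paper invokes the joint epimorphicity of $\kappa$ and $e$ (Proposition \ref{jointly epic}) — which is itself proved by the same computation.
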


\begin{proof}
a) Let $\omega : D \rightarrow A$ be a morphism of bialgebras such that $\alpha \cdot \omega = u_B \cdot {\blue \epsilon_D}$.
We build the morphism $\hat{\omega} : D \rightarrow X$ by setting \[\hat{\omega} = \lambda \cdot \omega.\]
First, we verify that $\kappa \cdot \hat{\omega} = w$ thanks to the commutativity of the following diagram
\begin{equation}\label{a)}
\begin{tikzpicture}[descr/.style={fill=white},baseline=(current  bounding  box.center),xscale=1.85,yscale=2] 
\node (A3) at (1,0.4) {$A$};
\node (A0) at (5,0.4) {$A$};
\node (B2) at (2,1) {$A^2$};
\node (B1) at (4,1) {$A^2$};
\node (C2) at (2,2) {$ D^2$};
\node (D3) at (1,2.6) {$D$};
\node (D0) at (5,2.6) {$D$};
\node at (3,0.7) {$(3)$};
\node at (3,2.3) {$\eqref{unital multiplication}+ \eqref{counit comultiplication}$};

\draw[commutative diagrams/.cd, ,font=\scriptsize]
(D0.west) edge[commutative diagrams/equal]  (D3.east)
(A0.west) edge[commutative diagrams/equal]  (A3.east);
\path[->,font=\scriptsize]
(B2.east) edge node[above] {$(\kappa \cdot  \lambda) \ox (e \cdot \alpha) $} (B1.west)
(C2.south east) edge node[descr] {$(\kappa \cdot  \lambda \cdot \omega) \ox (u_A \cdot {\blue \epsilon_D}) $} (B1.north west)
(D3.south east) edge node[descr] {$\Delta $} (C2.north west)
(B1.south east) edge node[descr] {$m $} (A0.north west)
(A3.north east) edge node[descr] {$\Delta $} (B2.south west)
(D0.south) edge node[descr] {$\kappa \cdot \lambda \cdot \omega $} (A0.north)
(D3.south) edge node[descr] {$ \omega $} (A3.north)
(C2.south) edge node[descr] {$ \omega \ox \omega $} (B2.north)
;
\end{tikzpicture}
 \end{equation}

Moreover, $\hat{\omega}$ is a coalgebra morphism by construction, and an algebra morphism since the following diagram commutes 

\begin{center}
\begin{tikzpicture}[descr/.style={fill=white},baseline=(A0.base),xscale=1.2,yscale=1.2] 
\node (A0) at (0,0) {$D^2$};
\node (A2) at (4,0) {$A^2$};
\node (A3) at (6,0) {$X^2$};
\node (A4) at (8,0) {$X.$};
\node (B0) at (0,1) {$D^2$};
\node (B2) at (4,1) {$A^2$};
\node (C2) at (4,2) {$ A^3$};
\node (D0) at (0,3) {$D^2$};
\node (D1) at (2,3) {$D^3$};
\node (D2) at (4,3) {$A^3$};
\node (E2) at (4,4) {$A^2$};
\node (E4) at (8,4) {$A$};
\node (F0) at (0,5) {$D^2$};
\node (F4) at (8,5) {$D$};
\node at (3,0.5) {$(1)$};
\node at (2,2) {$\eqref{unital multiplication}+ \eqref{counit comultiplication}$};
\node at (6,2) {$\eqref{prop lamda morph}$};
\draw[commutative diagrams/.cd, ,font=\scriptsize]
(F0.south) edge[commutative diagrams/equal]  (D0.north)
(D0.south) edge[commutative diagrams/equal]  (B0.north)
(B0.south) edge[commutative diagrams/equal]  (A0.north)
;
\path[->,font=\scriptsize]
(A0.east) edge node[above] {$\omega^2 $} (A2.west)
(A2.east) edge node[above] {$\lambda^2 $} (A3.west)
(A3.east) edge node[above] {$m $} (A4.west)
(F0.east) edge node[above] {$m $} (F4.west)
(E2.east) edge node[above] {$m $} (E4.west)
(D0.east) edge node[above] {$\Delta \ox 1_D $} (D1.west)
(D1.east) edge node[above] {$\omega^3 $} (D2.west)
(B0.east) edge node[above] {$\omega \ox (\kappa \cdot \lambda \cdot \omega) $} (B2.west)
(B2.south east) edge node[descr] {$\lambda^2 $} (A3.north west)
(D1.south east) edge node[descr] {$\omega \ox ( u_A \cdot {\blue \epsilon_D}) \ox (\kappa \cdot \lambda \cdot \omega) \; \; \; \; \; \; \; \; \; \; \;$} (C2.north west)
(D2.south) edge node[right] {$1_A \ox (e \cdot \alpha) \ox (\kappa \cdot \lambda) $} (C2.north)
(F0.south east) edge node[descr] {$\omega^2 $} (E2.north west)
(E2.south) edge node[descr] {$\Delta \ox 1_A$} (D2.north)
(C2.south) edge node[descr] {$ 1_A \ox m $} (B2.north)
(E4.south) edge node[descr] {$ \lambda $} (A4.north)
(F4.south) edge node[descr] {$ \omega $} (E4.north)
;
\end{tikzpicture}
 \end{center}
Finally, if there exists another morphism $\omega'$ such that $\kappa \cdot \omega' = w$, then by (1)
\[\omega' = \lambda \cdot \kappa \cdot \omega' = \lambda \cdot \omega = \lambda \cdot \kappa \cdot \hat{\omega} =  \hat{\omega}. \]
Hence, $\kappa$ is the kernel of $\alpha$.

b) Let $ \beta : A \rightarrow C$ be a bialgebra morphism such that $\beta \cdot \kappa = u_C \cdot {\blue \epsilon_X}$. We define $\tilde{\beta} : B \rightarrow C$ by \[\tilde{\beta} = \beta \cdot e.\] 
This morphism is a bialgebra morphism, and thanks to Proposition \ref{jointly epic}, it is enough to remark that $\tilde{\beta} \cdot \alpha \cdot \kappa = u_C \cdot {\blue \epsilon_X} = \beta \cdot \kappa$ and $\tilde{\beta} \cdot \alpha \cdot e = \beta \cdot e$, to conclude that $\tilde{\beta} \cdot \alpha = \beta$. Moreover, if there exists another $\beta'$ such that $\beta' \cdot \alpha = \beta$ then, thanks to (1) we have
\[\beta' = \beta' \cdot \alpha \cdot e = \beta \cdot e = \tilde{\beta} \cdot \alpha \cdot e = \tilde{\beta}\] and $\alpha$ is the cokernel of $\kappa$ 

{\blue c) Let $\gamma \colon Y \to A $ be a morphism of coalgebras such that  $\lambda \cdot \gamma = u_X \cdot \epsilon_Y$. We can verify that the equality $e \cdot \hat{\gamma} = \gamma$ holds for $\hat{\gamma} \coloneqq \alpha \cdot \gamma \colon Y \to B$, using the condition (3) in Definition \ref{definition split extension} as in diagram \eqref{a)}. Moreover, it is clear that $\hat{\gamma}$ is a coalgebra morphism and it is the unique morphism such that $e \cdot \hat{\gamma} = \gamma$.}

\qed
\end{proof}

\begin{lemma} \label{lemma morphism of actions}
Let $(g,v,p)$ be a morphism of split extensions as in Definition \ref{morph split ext}, then 
\[ v \cdot \act = \act \cdot (g \ox v),\] where the actions are induced by the split extensions. 
\end{lemma}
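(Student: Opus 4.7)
The plan is to chain together the defining formula for the induced action with the commutativity relations available from a morphism of split extensions. Recall from Proposition \ref{Action bialgebra} that the action is given by $\act = \lambda \cdot m \cdot (e \ox \kappa)$, and analogously $\act' = \lambda' \cdot m' \cdot (e' \ox \kappa')$ on the second split extension. So what must be checked is the identity
\[
v \cdot \lambda \cdot m \cdot (e \ox \kappa) \;=\; \lambda' \cdot m' \cdot (e' \ox \kappa') \cdot (g \ox v).
\]

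First I would invoke Corollary \ref{comm}: since by hypothesis $(g,v,p)$ is a morphism of split extensions, we have both $p\cdot \kappa = \kappa' \cdot v$ and $p \cdot e = e' \cdot g$, together with the derived equality $v \cdot \lambda = \lambda' \cdot p$. Next, starting from the left-hand side, I would rewrite $v \cdot \lambda$ as $\lambda' \cdot p$, producing
\[
v \cdot \act \;=\; \lambda' \cdot p \cdot m \cdot (e \ox \kappa).
\]
Then, because $p$ is by assumption a morphism of bialgebras (in particular of algebras), we may slide it across the multiplication, giving $p \cdot m = m' \cdot (p \ox p)$, so the expression becomes
\[
\lambda' \cdot m' \cdot (p \ox p) \cdot (e \ox \kappa) \;=\; \lambda' \cdot m' \cdot \bigl((p\cdot e) \ox (p\cdot \kappa)\bigr).
\]

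Finally, substituting $p \cdot e = e' \cdot g$ and $p \cdot \kappa = \kappa' \cdot v$ and factoring yields
\[
\lambda' \cdot m' \cdot \bigl((e'\cdot g) \ox (\kappa'\cdot v)\bigr) \;=\; \lambda' \cdot m' \cdot (e' \ox \kappa') \cdot (g \ox v) \;=\; \act' \cdot (g \ox v),
\]
which concludes the proof. There is no real obstacle here: the argument is a purely formal chain of rewrites, and all the needed ingredients (the formula for $\act$, the relation $v\cdot \lambda = \lambda'\cdot p$ from Corollary \ref{comm}, and the fact that $p$ preserves multiplication) are already established. The only mild subtlety worth underlining in the write-up is that one uses the \emph{algebra} part of $p$ being a bialgebra morphism (coassociativity and counit play no role), consistent with the observation at the end of Proposition \ref{jointly epic}.
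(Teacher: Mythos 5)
Your argument is correct and is exactly the paper's proof: the paper establishes the identity by the commutativity of a three-square diagram whose squares are precisely your three rewriting steps ($v\cdot\lambda=\lambda'\cdot p$, $p\cdot m = m'\cdot(p\ox p)$, and $p\cdot e = e'\cdot g$ together with $p\cdot\kappa=\kappa'\cdot v$). Nothing is missing.
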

\begin{proof}
This follows {\red from the fact that} $(g,v,p)$ is a morphism of split extensions and $p$ is a morphism of bialgebras, as we can see in the following diagram
\begin{center}
\begin{tikzpicture}[descr/.style={fill=white},scale=0.7] 
\node (A) at (0,0) {$ B \ox X  $};
\node (B) at (3,0) {$A^2 $};
\node (C) at (6,0) {$A $};
\node (D) at (9,0) {$X$};
\node (A') at (0,-3) {$ B' \ox X'  $};
\node (B') at (3,-3) {$A'^2 $};
\node (C') at (6,-3) {$A' $};
\node (D') at (9,-3) {$X'.$};
\draw[->] (A.north east) to[bend right=340] node[above,scale=0.8] {${\blue \act}$} (D.north west);
\draw[->] (A'.south east) to[bend left=340] node[below,scale=0.8] {${\blue \act}$} (D'.south west);
\path[->,font=\scriptsize]
(A.east) edge node[above] {$e \ox \kappa$} (B.west)
(B.east) edge node[above] {$m$} (C.west)
(C.east) edge node[above] {$\lambda$} (D.west)
(A'.east) edge node[above] {$e' \ox \kappa'$} (B'.west)
(B'.east) edge node[above] {$m'$} (C'.west)
(C'.east) edge node[above] {$\lambda'$} (D'.west)
(A.south) edge node[descr] {$g \ox v$} (A'.north)
(B.south) edge node[descr] {$p \ox p$} (B'.north)
(C.south) edge node[descr] {$p$} (C'.north)
(D.south) edge node[descr] {$v$} (D'.north)
;
\end{tikzpicture}
\end{center}
\qed
\end{proof}

\begin{lemma}\label{lemma morphism of split ext}
Let 
\begin{tikzpicture}[descr/.style={fill=white},baseline=(A.base),
xscale=0.9] 
\node (A) at (0,0) {$A$};
\node (B) at (2.5,0) {$B$};
\node (C) at (-2.5,0) {$X$};
\path[dashed,->,font=\scriptsize]
([yshift=2pt]A.west) edge node[above] {$\lambda$} ([yshift=2pt]C.east);
\path[->,font=\scriptsize]
([yshift=-4pt]C.east) edge node[below] {$\kappa$} ([yshift=-4pt]A.west)
([yshift=-4pt]A.east) edge node[below] {$\alpha$} ([yshift=-4pt]B.west)
([yshift=2pt]B.west) edge node[above] {$e$} ([yshift=2pt]A.east);
\end{tikzpicture} 
  and 
\begin{tikzpicture}[descr/.style={fill=white},baseline=(A.base),xscale=0.9] 
\node (A) at (0,0) {$A'$};
\node (B) at (2.5,0) {$B'$};
\node (C) at (-2.5,0) {$X'$};
\path[dashed,->,font=\scriptsize]
([yshift=2pt]A.west) edge node[above] {$\lambda'$} ([yshift=2pt]C.east)
;
\path[->,font=\scriptsize]
([yshift=-4pt]C.east) edge node[below] {$\kappa'$} ([yshift=-4pt]A.west)
([yshift=-4pt]A.east) edge node[below] {$\alpha'$} ([yshift=-4pt]B.west)
([yshift=2pt]B.west) edge node[above] {$e'$} ([yshift=2pt]A.east);
\end{tikzpicture} 
 be two split extensions of bialgebras, and $g \colon B \rightarrow B'$ and $v \colon X \rightarrow X'$ {\red two morphisms of bialgebras}. Then the following conditions are equivalent:
\begin{itemize}
\item[1)] there exists $p \colon A \rightarrow A'$ such that $(g,v,p)$ is a morphism of split extensions;
\item[2)] there exists a unique $p \colon A \rightarrow A'$ such that $(g,v,p)$ is a morphism of split extensions;
\item[3)] $ v \cdot \act = \act \cdot (g \otimes v)$.
\end{itemize}
\end{lemma}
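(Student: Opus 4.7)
The plan is to establish the implications $(2) \Rightarrow (1) \Rightarrow (3) \Rightarrow (1) \Rightarrow (2)$, with the content concentrated in $(1) \Rightarrow (3)$ (which is already done) and in $(3) \Rightarrow (1)$ (the main work). The implication $(2) \Rightarrow (1)$ is obvious, and $(1) \Rightarrow (3)$ is exactly the statement of Lemma~\ref{lemma morphism of actions}. The implication $(1) \Rightarrow (2)$ will follow from Proposition~\ref{jointly epic}: any morphism $p$ fitting in diagram \eqref{commute} must satisfy $p \cdot \kappa = \kappa' \cdot v$ and $p \cdot e = e' \cdot g$, and since $\kappa$ and $e$ are jointly epimorphic in the category of (bi)algebras, $p$ is uniquely determined.

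For $(3) \Rightarrow (1)$, I would exploit the canonical isomorphism of Proposition~\ref{canonical}. Given $g$ and $v$ together with condition (3), I first check that the morphism
\[
v \otimes g \colon X \rtimes B \longrightarrow X' \rtimes B'
\]
is a morphism of bialgebras, where the semi-direct product structures are built from the two actions $\triangleright$ and $\triangleright'$ induced by the two split extensions (via Proposition~\ref{Action bialgebra}). Preservation of $\Delta_{X \rtimes B}$, $u_{X \rtimes B}$ and $\epsilon_{X \rtimes B}$ is immediate, since these are the usual tensor coalgebra data and $v$, $g$ are bialgebra morphisms. The only nontrivial point is the preservation of $m_{X \rtimes B}$: plugging the defining formula of $m_{X \rtimes B}$ and using that $v \cdot m = m \cdot (v \otimes v)$, $g \cdot m = m \cdot (g \otimes g)$, $(g \otimes g)\cdot \Delta = \Delta \cdot g$, together with naturality of the braiding $\sigma$, the only remaining box to fill is the compatibility $v \cdot \triangleright = \triangleright' \cdot (g \otimes v)$, which is precisely hypothesis (3). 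I would then define
\[
p \coloneqq \phi' \cdot (v \otimes g) \cdot \psi \colon A \longrightarrow A',
\]
where $\psi$ and $\phi'$ are the canonical isomorphisms of Proposition~\ref{canonical} associated respectively with the top and bottom split extensions. As a composite of bialgebra morphisms, $p$ is a bialgebra morphism.

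It remains to verify that the triple $(g, v, p)$ is a morphism of split extensions. By Corollary~\ref{comm} it suffices to check $p \cdot \kappa = \kappa' \cdot v$ and $p \cdot e = e' \cdot g$. Unfolding $\psi = (\lambda \otimes \alpha) \cdot \Delta$ and $\phi' = m' \cdot (\kappa' \otimes e')$, and using the identities $\lambda \cdot \kappa = 1_X$, $\alpha \cdot \kappa = u_B \cdot \epsilon_X$ (with the corresponding ones for $e$) together with the unitality \eqref{unital multiplication} and the bialgebra compatibility $\Delta \cdot \kappa = (\kappa \otimes \kappa) \cdot \Delta$, both equalities reduce to short diagram chases. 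Uniqueness of $p$ then follows as in $(1) \Rightarrow (2)$.

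The main obstacle will be the verification that $v \otimes g$ preserves the multiplication of the semi-direct product; this is a somewhat long diagram involving $(\Delta \otimes \Delta)$, the braiding, the two actions, and two multiplications, but structurally it reduces to a single use of (3) after the naturality of $\sigma$ and the fact that $v, g$ are bialgebra morphisms are applied to all the other cells. Everything else is a routine consequence of the properties of the canonical isomorphism $\psi, \phi$ already proved. \qed
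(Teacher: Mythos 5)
Your proposal is correct and follows essentially the same route as the paper: reduce everything to $3)\Rightarrow 1)$ via Proposition \ref{jointly epic} and Lemma \ref{lemma morphism of actions}, show that $v\otimes g$ is a bialgebra morphism between the semi-direct products (the multiplication being the only nontrivial check, settled by one application of condition (3)), define $p=\phi'\cdot(v\otimes g)\cdot\psi$ using the canonical isomorphisms of Proposition \ref{canonical}, and verify the two squares $p\cdot\kappa=\kappa'\cdot v$ and $p\cdot e=e'\cdot g$ so that Corollary \ref{comm} finishes the argument. No gaps.
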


\begin{proof}
Thanks to Proposition \ref{jointly epic} and Lemma \ref{lemma morphism of actions}, we just need to check that $3) \Rightarrow 1)$. Let us define $\tilde{p} \colon X \rtimes B \rightarrow X' \rtimes B'$ as \[ \tilde{p} = v \otimes g. \]
It is clear that this morphism is a morphism of coalgebras. Moreover, $\tilde{p}$ is a morphism of algebras as we can see in the following diagram
\begin{center}
\begin{tikzpicture}[descr/.style={fill=white},yscale=0.8,
xscale=0.8] 
\node (A) at (-3.2,0) {$ (X \ox B)^2 $};
\node (B) at (2,0) {$X \ox B^2 \ox X \ox B  $};
\node (C) at (8,0) {$(X \ox B)^2 \ox B  $};
\node (D) at (12.5,0) {$X^2 \ox B^2 $};
\node (E) at (14.6,0) {$X \ox B$};
\node (A') at (-3.2,-3) {$ (X' \ox B' )^2  $};
\node (B') at (2,-3) {$X' \ox B'^2 \ox X' \ox B' $};
\node (C') at (8,-3) {$(X' \ox B')^2 \ox B' $};
\node (D') at (12.5,-3) {$X'^2 \ox B'^2$};
\node (E') at (14.6,-3) {$X' \ox B'.$};
\node (X) at (11,-1.5) {$3)$};
\draw[->] (A.north) to[bend right=350] node[above,scale=0.8] { ${\blue m_{X \rtimes B}}$} (E.north west);
\draw[->] (A'.south) to[bend left=350] node[below,scale=0.8] {${\blue m_{X' \rtimes B'}}$} (E'.south west);
\path[->,font=\scriptsize]
(A.east) edge node[below] {$1_X \ox \Delta \ox 1_X  \ox 1_B$} (B.west)
(B.east) edge node[below]  {$1_X \ox 1_B \ox {\blue \sigma_{B,X}}  \ox 1_B$} (C.west)
(C.east) edge node[below]  {$1_X \ox \act \ox (1_B)^2$} (D.west)
(D.east) edge node[below]  {$m^2$} (E.west)
(A'.east) edge node[below] {$1_{X'} \ox \Delta \ox 1_{X'}  \ox 1_{B'}$} (B'.west)
(B'.east) edge node[below]  {$1_{X'} \ox 1_{B'} \ox {\blue \sigma_{B',X'}}  \ox 1_{B'}$} (C'.west)
(C'.east) edge node[below]  {$1_{X'} \ox \act \ox (1_{B'})^2$} (D'.west)
(D'.east) edge node[above] {$m^2$} (E'.west)
(A.south) edge node[descr] {$(v \ox g )^2$} (A'.north)
(B.south) edge node[descr] {$v \ox g^2 \ox v \ox g$} (B'.north)
(C.south) edge node[descr] {$(v \ox g)^2 \ox g$} (C'.north)
(D.south) edge node[descr] {$v^2 \ox g^2$} (D'.north)
(E.south) edge node[descr] {$v  \ox g$} (E'.north)
;
\end{tikzpicture} 
\end{center}
Then we can define a morphism $p \colon A \rightarrow A'$ as $\phi_{A'} \cdot \tilde{p} \cdot \psi_A$ where $\phi$ and $\psi$ are the isomorphisms  in Proposition \ref{canonical}. In particular, that gives us
\[p= m \cdot (\kappa' \ox e') \cdot (v \ox g) \cdot (\lambda \ox \alpha) \cdot \Delta. \]
Finally, $(g, v, p)$ is a morphism of split extensions of bialgebras. Indeed, thanks to Corollary \ref{comm} the commutativity of these two diagrams  
\begin{center} 
\begin{tikzpicture}[descr/.style={fill=white},baseline=(A.base),yscale=0.7] 
\node (A) at (-2,0) {$ X'  $};
\node (B) at (2,0) {$A'$};
\node (C) at (2,2) {$A'^2  $};
\node (D) at (2,4) {$X' \ox B' $};
\node (E) at (2,6) {$X \ox B$};
\node (F) at (2,8) {$A^2 $};
\node (G) at (2,10) {$A $};
\node (H) at (-2,10) {$X$};
\node (I) at (0,8) {$X^2$};
\node at (1,7.5) {$(2)$};
\draw[->] (G.east) to[bend right=340] node[right,scale=0.8] {${\blue p}$} (B.north east);
\path[->,font=\scriptsize]
(A.east) edge node[above] {$\kappa'$} (B.west)
(H.east) edge node[above] {$\kappa$} (G.west)
(I.east) edge node[above] {$\kappa \ox \kappa$} (F.west)
(G.south) edge node[descr] {$\Delta $} (F.north)
(E.south) edge node[descr] {$v \ox g$} (D.north)
(D.south) edge node[descr] {$\kappa' \ox e'$} (C.north)
(C.south) edge node[descr] {$m$} (B.north)
(H.south) edge node[descr] {$v$} (A.north)
(H.south east) edge node[descr] {$\Delta$} (I.north west)
(I.south) edge node[left] {$1_X \ox (u_B \cdot {\blue \epsilon_X}) \; $} (E.north west)
(F.south) edge node[descr] {$\lambda \ox \alpha$} (E.north)
;
\end{tikzpicture} 
\qquad
\begin{tikzpicture}[descr/.style={fill=white},baseline=(A.base),yscale=0.7] 
\node (A) at (-2,0) {$ B'  $};
\node (B) at (2,0) {$A'$};
\node (C) at (2,2) {$A'^2  $};
\node (D) at (2,4) {$X' \ox B' $};
\node (E) at (2,6) {$X \ox B$};
\node (F) at (2,8) {$A^2 $};
\node (G) at (2,10) {$A $};
\node (H) at (-2,10) {$B$};
\node (I) at (0,8) {$B^2$};
\node at (1,7.5) {$(2)$};
\draw[->] (G.east) to[bend right=340] node[right,scale=0.8] {${\blue p}$} (B.north east);
\path[->,font=\scriptsize]
(A.east) edge node[above] {$e'$} (B.west)
(H.east) edge node[above] {$e$} (G.west)
(I.east) edge node[above] {$e \ox e$} (F.west)
(G.south) edge node[descr] {$\Delta $} (F.north)
(E.south) edge node[descr] {$v \ox g$} (D.north)
(D.south) edge node[descr] {$\kappa' \ox e'$} (C.north)
(C.south) edge node[descr] {$m$} (B.north)
(H.south) edge node[descr] {$g$} (A.north)
(H.south east) edge node[descr] {$\Delta$} (I.north west)
(I.south) edge node[left] {$(u_X \cdot {\blue \epsilon_B}) \ox 1_B\; $} (E.north west)
(F.south) edge node[descr] {$\lambda \ox \alpha$} (E.north)
;
\end{tikzpicture} 
\end{center}
suffices to conclude that diagram \eqref{commute} commutes. \qed
\end{proof}

\begin{definition}\label{morph action}
Let $\act \colon B \otimes X \rightarrow X$ and $\act' \colon B' \otimes X' \rightarrow X'$ be two actions of bialgebras. A morphism between them is defined as a pair of morphisms of bialgebras $g \colon B \rightarrow B'$ and $v \colon X \rightarrow X'$ such that  \[v \cdot \act = \act' \cdot (g \ox v).\]
\end{definition}
The split extensions of bialgebras (Definition \ref{definition split extension}) endowed with the morphisms of split extensions of bialgebras (Definition \ref{morph split ext}) form the category of split extensions of bialgebras denoted by $\mathsf{SplitExt({\blue BiAlg_\C})}$. The actions of bialgebras (Definition \ref{def action}) with the morphisms of actions (Definition \ref{morph action}) form the category of actions of bialgebras, denoted by $\mathsf{Act({\blue BiAlg_\C})}$.
\begin{theorem}\label{equi bialg}
{\blue Let $\C$ be a symmetric monoidal category}. The category $\mathsf{SplitExt({\blue BiAlg_\C})}$ of split extensions of bialgebras {\blue in $\C$} and the category $\mathsf{Act({\blue BiAlg_\C})}$ of actions of bialgebras {\blue in $\C$} are equivalent.
\end{theorem}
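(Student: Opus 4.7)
The plan is to construct two functors $F \col \mathsf{SplitExt(BiAlg_\C)} \to \mathsf{Act(BiAlg_\C)}$ and $G \col \mathsf{Act(BiAlg_\C)} \to \mathsf{SplitExt(BiAlg_\C)}$ and then exhibit natural isomorphisms $F \cdot G \cong \mathrm{Id}$ and $G \cdot F \cong \mathrm{Id}$. The functors are already almost entirely assembled from the results in the excerpt: all that is needed is to package them and check naturality, together with the two round-trips.

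I would define $F$ on objects by sending a split extension to the action $\act = \lambda \cdot m \cdot (e \ox \kappa)$ from Proposition \ref{Action bialgebra}, and on morphisms by sending $(g,v,p)$ to the pair $(g,v)$, which is a morphism of actions thanks to Lemma \ref{lemma morphism of actions}. Functoriality is immediate since composition of morphisms of split extensions is given componentwise. Dually, $G$ sends an action $\act \col B \ox X \to X$ to the semi-direct product split extension \eqref{semi-direct product} of Lemma \ref{obs semi direct bialg}, and a morphism of actions $(g,v)$ to the triple $(g,v, v \ox g)$. One checks that $v \ox g$ is a bialgebra morphism $X \rtimes B \to X' \rtimes B'$ compatible with $i_1, i_2, \pi_2$, and that it makes the relevant squares with $\pi_1$ commute; by Corollary \ref{comm} it then suffices to verify compatibility with $\kappa, e$, both of which are direct.

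For $F \cdot G \cong \mathrm{Id}$, start from an action $\act$, form the semi-direct product $X \rtimes B$, and then compute the induced action: by the very definition of $m_{X \rtimes B}$ together with the unit and counit axioms \eqref{conditionaction1}--\eqref{condition action = e}, one finds
\[
\pi_1 \cdot m_{X \rtimes B} \cdot (i_2 \ox i_1) = \act,
\]
so $F \cdot G$ is the identity on objects. On morphisms the composite sends $(g,v)$ to $(g,v)$ tautologically, giving $F \cdot G = \mathrm{Id}$ strictly. For $G \cdot F \cong \mathrm{Id}$, Proposition \ref{canonical} provides, for each split extension, an isomorphism $(1_B, 1_X, \phi)$ between it and the semi-direct product of its induced action. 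The naturality of this family in the split extension reduces, via Corollary \ref{comm} and the joint epimorphicity of $(\kappa,e)$ from Proposition \ref{jointly epic}, to checking compatibility of $\phi = m \cdot (\kappa \ox e)$ with the middle components of a morphism of split extensions; this is automatic because $p$ is a bialgebra morphism and commutes with $\kappa, e$ by hypothesis.

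The main obstacle is organisational rather than computational: one must check that the round-trip $G \cdot F$ produces a canonical isomorphism in a way that is natural in the split extension. This naturality is where Lemma \ref{lemma morphism of split ext} plays the decisive role, since it guarantees that once the morphisms on $X$ and $B$ are fixed and are compatible with the actions, there is a unique induced morphism on the middle object; applied to $(1_X, 1_B)$ together with the constructed isomorphism $\phi$, this forces the naturality squares to commute. All remaining verifications are either already done in the cited propositions and lemmas or follow by direct diagram chases using the axioms of a symmetric monoidal category and the definitions of $m_{X \rtimes B}$ and $\Delta_{X \rtimes B}$.
\qed
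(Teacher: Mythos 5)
Your proposal follows essentially the same route as the paper: the same functors $F$ (sending a split extension to $\act = \lambda \cdot m \cdot (e \ox \kappa)$) and $G$ (sending an action to its semi-direct product), the same verification that $F \cdot G$ is the identity via $\pi_1 \cdot m_{X \rtimes B} \cdot (i_2 \ox i_1) = \act$, and the same use of the canonical isomorphisms $\phi$, $\psi$ of Proposition \ref{canonical} for $G \cdot F \cong \mathrm{Id}$. Your write-up is in fact slightly more explicit about naturality than the paper's, which leaves that point to the reader; the argument is correct.
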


\begin{proof}
The functor $ F \colon \mathsf{SplitExt({\blue BiAlg_\C})} \rightarrow \mathsf{Act({\blue BiAlg_\C})}$ is defined as

\[ F \left( 
\begin{tikzpicture}[descr/.style={fill=white},baseline=(O.base)] 
\node (O) at (1,-1) {$ $};
\node (A) at (0,0) {$A$};
\node (B) at (2.5,0) {$B$};
\node (C) at (-2.5,0) {$X$};
\node (A') at (0,-2) {$A'$};
\node (B') at (2.5,-2) {$B'$};
\node (C') at (-2.5,-2) {$X'$};
\path[dashed,->,font=\scriptsize]
([yshift=2pt]A'.west) edge node[above] {$\lambda'$} ([yshift=2pt]C'.east)
([yshift=2pt]A.west) edge node[above] {$\lambda$} ([yshift=2pt]C.east)
;
\path[->,font=\scriptsize]
(C.south) edge node[left] {$ v$} (C'.north)
(B.south) edge node[left] {$ g$} (B'.north)
(A.south) edge node[left] {$ p$} (A'.north)
([yshift=-4pt]C'.east) edge node[below] {$\kappa'$} ([yshift=-4pt]A'.west)
([yshift=-4pt]A'.east) edge node[below] {$\alpha'$} ([yshift=-4pt]B'.west)
([yshift=2pt]B'.west) edge node[above] {$e'$} ([yshift=2pt]A'.east)
([yshift=-4pt]C.east) edge node[below] {$\kappa$} ([yshift=-4pt]A.west)
([yshift=-4pt]A.east) edge node[below] {$\alpha$} ([yshift=-4pt]B.west)
([yshift=2pt]B.west) edge node[above] {$e$} ([yshift=2pt]A.east);
\end{tikzpicture} 
\right) =
\begin{tikzpicture}[descr/.style={fill=white},baseline=(O.base)] 
\node (E) at (4,0) {$B \otimes X$};
\node (E') at (4,-2) {$B' \otimes X'$};
\node (F) at (6,0) {$X$};
\node (F') at (6,-2) {$X'$};
\path[->,font=\scriptsize]
(E.east) edge node[above] {$\act$} (F.west)
(E'.east) edge node[above] {$\act'$} (F'.west)
(E.south) edge node[descr] {$ g \otimes v$} (E'.north)
(F.south) edge node[left] {$ v$} (F'.north)
;
\end{tikzpicture} 
  \]
where $\act= \lambda \cdot m \cdot (e \ox \kappa)$ as in Proposition \ref{Action bialgebra}, and we have a morphism of actions thanks to Lemma \ref{lemma morphism of actions}. The functor $G \colon \mathsf{Act({\blue BiAlg_\C})} \rightarrow \mathsf{SplitExt({\blue BiAlg_\C})}$ is defined as
\[ G \left( 
\begin{tikzpicture}[descr/.style={fill=white},baseline=(O.base)] 
\node (O) at (-5,-1) {$ $};
\node (E) at (-6.5,0) {$B \otimes X$};
\node (E') at (-6.5,-2) {$B' \otimes X'$};
\node (F) at (-4.5,0) {$X$};
\node (F') at (-4.5,-2) {$X'$};
\path[->,font=\scriptsize]
(E.east) edge node[above] {$\act$} (F.west)
(E'.east) edge node[above] {$\act'$} (F'.west)
(E.south) edge node[descr] {$ g \otimes v$} (E'.north)
(F.south) edge node[left] {$ v$} (F'.north);
\end{tikzpicture} 
 \right) = 
\begin{tikzpicture}[descr/.style={fill=white},baseline=(O.base)] 
\node (O) at (-1,-1) {$ $};
\node (A) at (0,0) {$X \rtimes B$};
\node (B) at (2.5,0) {$B$};
\node (C) at (-2.5,0) {$X$};
\node (A') at (0,-2) {$X' \rtimes B'$};
\node (B') at (2.5,-2) {$B'$};
\node (C') at (-2.5,-2) {$X'$};
\path[dashed,->,font=\scriptsize]
([yshift=2pt]A'.west) edge node[above] {$\pi_1'$} ([yshift=2pt]C'.east)
([yshift=2pt]A.west) edge node[above] {$\pi_1$} ([yshift=2pt]C.east);
\path[->,font=\scriptsize]
(C.south) edge node[left] {$ v$} (C'.north)
(B.south) edge node[left] {$ g$} (B'.north)
(A.south) edge node[left] {$ p$} (A'.north)
([yshift=-4pt]C'.east) edge node[below] {$i_1'$} ([yshift=-4pt]A'.west)
([yshift=-4pt]A'.east) edge node[below] {$\pi_2'$} ([yshift=-4pt]B'.west)
([yshift=2pt]B'.west) edge node[above] {$i_2'$} ([yshift=2pt]A'.east)
([yshift=-4pt]C.east) edge node[below] {$i_1$} ([yshift=-4pt]A.west)
([yshift=-4pt]A.east) edge node[below] {$\pi_2$} ([yshift=-4pt]B.west)
([yshift=2pt]B.west) edge node[above] {$i_2$} ([yshift=2pt]A.east);
\end{tikzpicture} 
 \]
where $p{\blue \coloneqq} v \otimes g$ is given by Lemma \ref{lemma morphism of split ext} and the bialgebra structure{\blue s}  of the semi-direct product{\blue s}  $X \rtimes B$ and $X' \rtimes B'$ are defined as in \eqref{semi-direct product}.

{\red We observe that} \begin{equation}\label{act split}
{\blue ( } F \cdot G {\blue ) } ( \act) =   \pi_1 \cdot m_{X \rtimes B} \cdot (i_2 \otimes i_1) = \triangleright,\end{equation}
where the last equality holds thanks to the commutativity of the following diagram
\begin{center}
\begin{tikzpicture}[descr/.style={fill=white},baseline=(A.base),yscale=0.85,xscale=0.85] 
\node (A) at (-6,0) {$B \ox X$};
\node (B) at (10,0) {$X.$};
\node (C) at (10,2) {$X \ox B  $};
\node (D) at (10,4) {$ X^2 \ox B^2$};
\node (E) at (10,6) {$ X \ox B \ox X \ox B^2$};
\node (F) at (4,6) {$ X \ox B^2 \ox X \ox B$};
\node (G) at (-1,6) {$ (X \ox B)^2 $};
\node (H) at (-6,6) {$B \ox X$};
\node (I) at (-6,4) {$B \ox X$};
\node (J) at (-2,4) {$B^2 \ox X$};
\node (K) at (1.5,4) {$B \ox X  \ox B$};
\node (L) at (5,4) {$ X \ox B$};
\node at (8.5,3) {$ \eqref{unital multiplication}$};
\node at (2,2) {$ \eqref{counit comultiplication}$};
\draw[commutative diagrams/.cd, ,font=\scriptsize]
(L.south) edge[commutative diagrams/equal]  (C.west)
(I.south) edge[commutative diagrams/equal]  (A.north)
(H.south) edge[commutative diagrams/equal]  (I.north);
\path[->,font=\scriptsize]
(F.east) edge node[above] {$1_X \ox 1_B \ox {\blue \sigma_{B,X}} \ox 1_B$} (E.west)
(E.south) edge node[descr] {$1_X \ox  \act \ox 1_B \ox 1_B$} (D.north)
(D.south) edge node[descr] {$ m \ox m$} (C.north)
(C.south) edge node[descr] {$1_X \ox {\blue \epsilon_B}$} (B.north)
(A.east) edge node[above] {$ \act $} (B.west)
(H.east) edge node[above] {$u_X \ox 1_B \ox 1_X \ox u_B$} (G.west)
(G.east) edge node[above] {$1_X \ox \Delta \ox 1_X \ox 1_B$} (F.west)
(I.east) edge node[above] {$\Delta  \ox 1_X$} (J.west)
(J.east) edge node[above] {$1_B \ox {\blue \sigma_{B,X}}$} (K.west)
(K.east) edge node[above] {$\act \ox 1_B$} (L.west)
(L.east) edge node[above] {$u_X \ox 1_X \ox 1_B \ox u_B$} (D.west);
\end{tikzpicture}
\end{center}
{\red Thanks to this observation and the isomorphisms $\phi$ and $\psi$ of \eqref{diag iso}, the functors $F$ and $G$ give rise to an equivalence of categories}.
\qed
\end{proof}


To end this section we prove that a variation of the Split Short Five Lemma holds in $\mathsf{{\blue BiAlg_\C}}$. 

\begin{theorem}\label{Sec SSFL}
Let $(g,v,p)$ be a morphism of split extensions of bialgebras {\blue in a symmetric monoidal category $\C$} \begin{equation}\label{diag ssFL} 
\begin{tikzpicture}[descr/.style={fill=white},baseline=(current  bounding  box.center)] 
\node (A) at (0,0) {$A$};
\node (B) at (2.5,0) {$B$};
\node (C) at (-2.5,0) {$X$};
\node (A') at (0,-2) {$A'$};
\node (B',) at (2.5,-2) {$B'$};
\node (C') at (-2.5,-2) {$X'$};
\path[dashed,->,font=\scriptsize]
([yshift=2pt]A'.west) edge node[above] {$\lambda'$} ([yshift=2pt]C'.east)
([yshift=2pt]A.west) edge node[above] {$\lambda$} ([yshift=2pt]C.east)
;
\path[->,font=\scriptsize]
(B.south) edge node[right] {$ g$}  (B'.north)
 (C.south) edge node[left] {$ v $}  (C'.north)
(A.south) edge node[left] {$ p$} (A'.north)
([yshift=-4pt]C'.east) edge node[below] {$\kappa'$} ([yshift=-4pt]A'.west)
([yshift=-4pt]A'.east) edge node[below] {$\alpha'$} ([yshift=-4pt]B'.west)
([yshift=2pt]B'.west) edge node[above] {$e'$} ([yshift=2pt]A'.east)
([yshift=-4pt]C.east) edge node[below] {$\kappa$} ([yshift=-4pt]A.west)
([yshift=-4pt]A.east) edge node[below] {$\alpha$} ([yshift=-4pt]B.west)
([yshift=2pt]B.west) edge node[above] {$e$} ([yshift=2pt]A.east);
\end{tikzpicture} 
 \end{equation} then $p$ is an isomorphism whenever $v$ and $g$ are.
\end{theorem}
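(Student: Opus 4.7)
The plan is to reduce the statement to the semi-direct product case via the canonical isomorphism of Proposition \ref{canonical}, and then exploit the explicit description of $p$ provided by Lemma \ref{lemma morphism of split ext}. More precisely, applying Proposition \ref{canonical} to each row of \eqref{diag ssFL} gives bialgebra isomorphisms
\[ \psi_A \col A \xrightarrow{\;\cong\;} X \rtimes B, \qquad \phi_{A'} \col X' \rtimes B' \xrightarrow{\;\cong\;} A',\]
so that $A$ and $A'$ are replaced (up to isomorphism of split extensions) by $X\rtimes B$ and $X'\rtimes B'$, where the semi-direct products are built from the actions $\act$ and $\act'$ induced by the split extensions via Proposition \ref{Action bialgebra}.

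Next, by Lemma \ref{lemma morphism of actions}, the morphism $(g,v,p)$ induces a morphism of the corresponding actions, i.e.\ $v\cdot \act = \act'\cdot(g\ox v)$. By Lemma \ref{lemma morphism of split ext}, applied to the pair $(g,v)$, there is a unique bialgebra morphism $\tilde{p}\col X\rtimes B \to X'\rtimes B'$ compatible with the two (semi-direct product) split extensions, namely $\tilde{p} = v\ox g$. Uniqueness of the filler (Lemma \ref{lemma morphism of split ext}, $(1)\Leftrightarrow (2)$) combined with Proposition \ref{canonical} forces
\[ p \;=\; \phi_{A'} \cdot (v\ox g) \cdot \psi_A. \]

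Finally, I will observe that each factor on the right is an isomorphism in the category of bialgebras: $\phi_{A'}$ and $\psi_A$ by Proposition \ref{canonical}, and $v\ox g$ because $v$ and $g$ are bialgebra isomorphisms by hypothesis, so $v^{-1}$ and $g^{-1}$ are bialgebra morphisms and $v^{-1}\ox g^{-1}$ provides a two-sided inverse to $v\ox g$ (functoriality of $\ox$). Hence $p$ is a composite of three isomorphisms, and is therefore itself an isomorphism of bialgebras.

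The substantive ingredients are already in place, so no delicate diagram chase remains; the only subtlety worth flagging is that one should apply Lemma \ref{lemma morphism of split ext} both to $(g,v)$ and to $(g^{-1},v^{-1})$ — the latter makes sense precisely because the compatibility $v\cdot \act = \act'\cdot(g\ox v)$ can be rewritten, by pre- and post-composing with the relevant inverses, as $v^{-1}\cdot \act' = \act\cdot(g^{-1}\ox v^{-1})$ — so that $v^{-1}\ox g^{-1}$ is really a morphism of the two semi-direct product split extensions, thereby justifying that the inverse of $p$ provided by the formula is a bialgebra morphism and not merely a morphism in $\C$.
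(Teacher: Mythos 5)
Your proposal is correct and follows essentially the same route as the paper: the paper's proof simply invokes Theorem \ref{equi bialg} to replace the diagram by its canonical semi-direct product form, where the middle map is $v\ox g$, and concludes; you unfold this by citing the constituent results (Proposition \ref{canonical}, Lemma \ref{lemma morphism of actions}, Lemma \ref{lemma morphism of split ext}) explicitly. Your extra remark about applying Lemma \ref{lemma morphism of split ext} to $(g^{-1},v^{-1})$ is a careful (if not strictly necessary, since the $\C$-inverse of a bialgebra isomorphism is automatically a bialgebra morphism) justification of a point the paper leaves implicit.
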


{\green \begin{proof}

Thanks to Theorem \ref{equi bialg}, the diagram \eqref{diag ssFL} is canonically isomorphic to 
\[ 
\begin{tikzpicture}[descr/.style={fill=white},baseline=(A'.base)] 
\node (A) at (0,0) {$X \rtimes B$};
\node (B) at (2.5,0) {$B$};
\node (C) at (-2.5,0) {$X$};
\node (A') at (0,-2) {$X' \rtimes B'$};
\node (B') at (2.5,-2) {$B'.$};
\node (C') at (-2.5,-2) {$X'$};
\path[dashed,->,font=\scriptsize]
([yshift=2pt]A'.west) edge node[above] {$\pi_1'$} ([yshift=2pt]C'.east)
([yshift=2pt]A.west) edge node[above] {$\pi_1$} ([yshift=2pt]C.east)
;
\path[->,font=\scriptsize]
(B.south) edge node[right] {$ g$}  (B'.north)
 (C.south) edge node[left] {$ v $}  (C'.north)
(A.south) edge node[descr] {$ v \otimes g$} (A'.north)
([yshift=-4pt]C'.east) edge node[below] {$i_1'$} ([yshift=-4pt]A'.west)
([yshift=-4pt]A'.east) edge node[below] {$\pi_2'$} ([yshift=-4pt]B'.west)
([yshift=2pt]B'.west) edge node[above] {$i_2'$} ([yshift=2pt]A'.east)
([yshift=-4pt]C.east) edge node[below] {$i_1$} ([yshift=-4pt]A.west)
([yshift=-4pt]A.east) edge node[below] {$\pi_2$} ([yshift=-4pt]B.west)
([yshift=2pt]B.west) edge node[above] {$i_2$} ([yshift=2pt]A.east);
\end{tikzpicture}
 \] 
 It follows that $v \otimes g$ is an isomorphism whenever $v$ and $g$ are.
 \qed
\end{proof}}

{\blue \section{The cocommutative and associative cases}}

If we consider cocommutative bialgebras, then the category $\sf {\blue BiAlg_{\C,coc}}$ of cocommutative bialgebras can be seen as the category of internal magmas in the category of cocommutative coalgebras. Indeed, the categorical product of cocommutative coalgebras is given by the tensor product. 

In this particular case, {\red the condition \eqref{coco} in Definition \ref{def action} becomes trivial and the definition can be reformulated explicitly as}:
\begin{definition}
Let $X$ and $B$ be cocommutative bialgebras. An action of $B$ on $X$ is a morphism of coalgebras $\act \colon B \otimes X \rightarrow X $ such that
\begin{equation*}
\act \cdot (u_B \ox 1_X)= 1_X,
\end{equation*} 
\begin{equation*}
\act \cdot (1_B \ox u_X)= u_X \cdot {\blue \epsilon_B}.
\end{equation*} 
\end{definition} 
{\red Similarly}, thanks to the cocommutativity we can drop the condition (5) in the definition of split extensions of bialgebras (Definition \ref{definition split extension}), and this turns out to be exactly the internal version, in the category of coalgebras, of Definition 1.4 in \cite{GJS}. Then, in the case of cocommutative bialgebras, the above theorem reduces to the results in Section 4.6 in \cite{GJS}. {\green Indeed, if $\mathcal{C}$ is a category with finite limits, it is in particular a cartesian monoidal category. Hence, any magma in such a category can be seen as a non-associative cocommutative bialgebra in this category ({\rouge where the unique comultiplication is the diagonal). Accordingly,} the results in \cite{GJS} become a particular case of our theorem. }

{\blue 
It is interesting to note that the multiplication $m_{X \rtimes B} : (X \ox B) \ox (X \ox B) \rightarrow X \ox B$, defined in \eqref{semi-direct product}, is not associative in general. Even if $X$ and $B$ are associative bialgebras this structure need not be associative. Let us illustrate this observation with a simple example. 
\begin{example}\label{ex ass bialgebra} 
 Let $\C$ be the category of sets, and we consider the monoids $(\mathbb{N},+,0)$ and $(\mathbb{N}_0,{\ro \cdot},1)$. In particular we can construct an action of $(\mathbb{N}_0,{\ro \cdot},1)$ on $(\mathbb{N},+,0)$ via the function $\act \colon \mathbb{N}_0 \times \mathbb{N} \to \mathbb{N}$ sending $(b,x)$ to $x$ to the power of $b$, $x^b$. We can observe that this function satisfies the conditions \eqref{conditionaction1} and \eqref{condition action = e} of Definition \ref{def action} (the other ones being trivial in $\C = \sf Set$): $x^1 = x$, $0^b = 0$ for any $x \in \mathbb{N}, b \in \mathbb{N}_0$. By defining, via this action, the bialgebra structure on $\mathbb{N} \rtimes \mathbb{N}_0$ as $m\left((x,b),(y,c)\right) = (x + y^b,b{\ro \cdot} c)$, this multiplication is non-associative: 
\[ m\left(m\left((0,2),(1,1)\right),(1,1)\right) = m((1,2),(1,1)) = (1+ 1^2,2 {\ro \cdot} 1) = (2,2)\]
\[ m\left((0,2),m\left((1,1),(1,1)\right)\right) = m((0,2),(2,1)) = (0+ 2^2,2 {\ro \cdot} 1) = (4,2).\]
\end{example}
We need additional conditions on the actions of two associative bialgebras $B$ and $X$, to obtain an associative bialgebra structure on $X \rtimes B$. In particular, we have the following Lemma.}

\begin{lemma}\label{ass bialgebra} 
{\ro Let $X$ and $B$ be two associative bialgebras}, then $m_{X \rtimes B} : (X \ox B) \ox (X \ox B) \rightarrow X \ox B$ is associative as well if and only if the following conditions are satisfied
\begin{equation}\label{ass action_1}
\triangleright \cdot (m \otimes 1_X) = \triangleright \cdot (1_B \otimes \triangleright),
\end{equation}
\begin{equation}\label{ass action_2}
\triangleright \cdot (1_B \otimes m) = m \cdot (\triangleright \otimes \triangleright) \cdot (1_B \otimes {\blue \sigma_{B,X}} \otimes 1_X) \cdot (\Delta \otimes 1_X \otimes 1_X).
\end{equation}
\end{lemma}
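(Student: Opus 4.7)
The plan is to handle the two implications separately, both reducing to diagrammatic manipulations involving only the associativity of $m_X$ and $m_B$, the coassociativity of $\Delta_B$, the normalization axioms \eqref{conditionaction1} and \eqref{condition action = e} of the action, naturality of the braiding, and the bialgebra axioms.

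For the \textbf{if} direction, I would write both composites
\[
m_{X\rtimes B}\cdot (m_{X\rtimes B}\otimes 1_{X\otimes B}), \qquad m_{X\rtimes B}\cdot (1_{X\otimes B}\otimes m_{X\rtimes B})
\]
explicitly by unfolding the definition of $m_{X\rtimes B}$ given in \eqref{semi-direct product}. Working informally with Sweedler-style notation for the coalgebra $B$ (which legitimately encodes the diagrams in the symmetric monoidal category $\C$), on triples $(x,b),(y,c),(z,d)$ the left composite equals $\bigl(x\cdot(b_{(1)}\act y)\cdot((b_{(2)}c_{(1)})\act z),\; b_{(3)}c_{(2)}d\bigr)$ while the right composite equals $\bigl(x\cdot(b_{(1)}\act(y\cdot(c_{(1)}\act z))),\; b_{(2)}c_{(2)}d\bigr)$. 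The $B$-components agree by coassociativity of $\Delta_B$ and associativity of $m_B$. For the $X$-components, I would apply \eqref{ass action_2} to replace $b_{(1)}\act(y\cdot(c_{(1)}\act z))$ by $(b_{(1)}\act y)\cdot(b_{(2)}\act(c_{(1)}\act z))$, then \eqref{ass action_1} to rewrite $b_{(2)}\act(c_{(1)}\act z)$ as $(b_{(2)}c_{(1)})\act z$, and finally use associativity of $m_X$. Translated into commutative-diagram form in $\C$, each Sweedler step corresponds to one invocation of coassociativity and one of naturality of $\sigma$; I would present this as a single (large) diagram split into labelled cells.

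For the \textbf{only if} direction, assume $m_{X\rtimes B}$ is associative and extract each identity by pre- and post-composing with suitable unit/counit morphisms so that all the irrelevant tensor factors collapse. To obtain \eqref{ass action_2}, I would pre-compose the associativity equation with $u_X\otimes 1_B\otimes 1_X\otimes u_B\otimes 1_X\otimes u_B \col B\otimes X\otimes X \to (X\otimes B)^{\otimes 3}$, then post-compose with $1_X\otimes\epsilon_B\col X\otimes B\to X$. Using \eqref{conditionaction1}, \eqref{condition action = e}, and the counit/unit laws, the left-hand side simplifies to $\act\cdot(1_B\otimes m)$ and the right-hand side to $m\cdot(\act\otimes\act)\cdot(1_B\otimes\sigma_{B,X}\otimes 1_X)\cdot(\Delta\otimes 1_X\otimes 1_X)$. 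To obtain \eqref{ass action_1}, I would pre-compose instead with $u_X\otimes 1_B\otimes u_X\otimes 1_B\otimes 1_X\otimes u_B\col B\otimes B\otimes X \to (X\otimes B)^{\otimes 3}$ and again post-compose with $1_X\otimes\epsilon_B$; the $\act\cdot(1_B\otimes u_X)=u_X\cdot\epsilon_B$ axiom kills the term $(b_{(1)}\act 1_X)$ appearing on one side, leaving $\act\cdot(m\otimes 1_X)$ on one side and $\act\cdot(1_B\otimes\act)$ on the other.

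The main obstacle is purely bookkeeping: producing a single readable diagram for the ``if'' direction that makes visible exactly where each of the two hypotheses is used, and where naturality of $\sigma$ versus coassociativity of $\Delta_B$ intervenes. No new conceptual input beyond what is already used in the proof of Proposition~\ref{Action bialgebra} and Lemma~\ref{tech lemma} is required; the length of the diagrams suggests factoring each direction into two or three sub-figures, in the style of Figures~\ref{DiagramA2}--\ref{DiagramC2}.
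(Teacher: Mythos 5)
Your proposal is correct and follows essentially the same route as the paper: the ``only if'' direction by pre-composing the associativity identity with unit insertions $u_X\otimes 1_B\otimes u_X\otimes 1_B\otimes 1_X\otimes u_B$ (resp.\ $u_X\otimes 1_B\otimes 1_X\otimes u_B\otimes 1_X\otimes u_B$) and post-composing with $1_X\otimes\epsilon_B$, exactly as in the paper's Figure~\ref{3.1thenasss}; and the ``if'' direction by unfolding $m_{X\rtimes B}$ and applying \eqref{ass action_2}, \eqref{ass action_1}, coassociativity and associativity, which is the content of Figure~\ref{sm ass} in the appendix. The Sweedler-style bookkeeping you describe translates faithfully into the string-diagram cells the paper uses, so no gap remains.
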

\begin{proof}
Via Figure \ref{3.1thenasss}, we show that if $m_{X \rtimes B}$ is associative then \eqref{ass action_1} is immediately satisfied. With similar computations, we can show that the associativity also gives the condition \eqref{ass action_2}. The other implication is given by the commutativity of the diagram {\red in Figure \ref{sm ass} in the appendix} (it is a direct computation using the conditions \eqref{ass action_1} and \eqref{ass action_2}).
\setcounter{figure}{7}
\begin{figure}
\centering
\begin{tikzpicture}[descr/.style={fill=white},baseline=(A0.base),
xscale=1.83,yscale=2] 
\node (A0) at (0.5,0) {$ B^2 \ox X$};
\node (A2) at (4,0) {$ X \ox B \ox X$};
\node (A3) at (6,0) {$X^2$};
\node (A4) at (8,0) {$X$};
\node (B0) at (0.5,1) {$  B^2 \ox X$};
\node (B1) at (2,1) {$X \ox B^2 \ox X$};
\node (B2) at (4,1) {$ (X \ox B)^2 $};
\node (B3) at (6,1) {$X \ox B$};
\node (B4) at (8,1) {$X^2$};
\node (C1) at (2,2) {$ B \ox X \ox B^2 \ox X$};
\node (D0) at (0.5,3) {$B^2 \ox X$};
\node (D1) at (2,3) {$B^3 \ox X$};
\node (D2) at (4,3) {$(X \ox B)^3$};
\node (D3) at (6,3) {$(X \ox B)^2$};
\node (D4) at (8,3) {$X \ox B \ox X$};
\node (E1) at (2,4) {$B^2 \ox X$};
\node (E2) at (4,4) {$ X \ox B^3 \ox X$};
\node (E3) at (6,4) {$X \ox B^2 \ox X \ox B$};
\node (F0) at (0.5,5) {$B^2 \ox X$};
\node (F4) at (8,5) {$B^2 \ox X$};
\node at (1,2) {$\eqref{condition action = e}$};
\node at (5,2.25) {$associativity$};
\node at (1,4) {$\eqref{coass comultiplication}$};
\node at (7,4.5) {$\eqref{counit comultiplication}$};
\node at (3,0.75) {$\eqref{u et epsilon}$};
\node at (3,3) {$\eqref{unital multiplication}$};
\node at (5,3.75) {$\eqref{unital multiplication}$};
\node at (5.5,0.5) {$(A)$};
\node at (7,2) {$(A)$};
\draw[commutative diagrams/.cd, ,font=\scriptsize]
(F0.south) edge[commutative diagrams/equal]  (D0.north)
(F0.east) edge[commutative diagrams/equal]  (F4.west)
(F0.south east) edge[commutative diagrams/equal]  (E1.north west)
(D0.south) edge[commutative diagrams/equal]  (B0.north)
(B0.south) edge[commutative diagrams/equal]  (A0.north);
\draw[->] (F4.east) to[bend left=15]node[descr,scale=0.8,xshift=-15pt] {${\blue \act\cdot (1_B \ox \act)}$} (A4.east);
\draw[->] (A0.south east) to[bend right=12]node[descr,scale=0.8] {${\blue \act\cdot (m \ox 1_X)}$} (A4.south west);
\path[->,font=\scriptsize]
(E1.south east) edge node[descr] {$ \; \; \; \; \; \; \; \; \; \; \; \; \; \; \; \; \; \; \; \; \; \; \; \; \; \; \; \; \; \; \; \; \; \; \; u_X \ox 1_B \ox u_X \ox 1_B \ox 1_X \ox u_B $} (D2.north west)
(A0.east) edge node[above] {$u_X \ox m \ox 1_X $} (A2.west)
(A2.east) edge node[above] {$1_X \ox \act $} (A3.west)
(A3.east) edge node[above] {$m$} (A4.west)
(B0.east) edge node[above] {$u_X \ox (1_B)^2 \ox 1_X $} (B1.west)
(B1.east) edge node[above] {$1_X \ox m \ox 1_X \ox u_B$} (B2.west)
(B2.east) edge node[above] {$m_{X \rtimes B} $} (B3.west)
(B3.south east) edge node[descr] {$1_X \ox {\blue \epsilon_B}$} (A4.north west)
(D1.west) edge node[above] {$ {\blue \epsilon_B} \ox (1_B)^2 \ox 1_X $} (D0.east)
(D2.east) edge node[above] {$1_X \ox 1_B \ox m_{X \rtimes B}$} (D3.west)
(D3.east) edge node[above] {$1_X \ox 1_B \ox 1_X \ox {\blue \epsilon_B}$} (D4.west)
(E1.east) edge node[above] {$ u_X \ox 1_B\ox \Delta \ox 1_X$} (E2.west)
(E2.east) edge node[above] {$1_X \ox (1_B)^2 \ox {\blue \sigma_{B,X}} $} (E3.west)
(E1.south) edge node[descr] {$\Delta \ox 1_B \ox 1_X$} (D1.north)
(D1.south) edge node[descr] {$1_B \ox u_X \ox (1_B)^2 \ox 1_X$} (C1.north)
(C1.south) edge node[descr] {$\act \ox (1_B)^2 \ox 1_X$} (B1.north)
(D2.south) edge node[descr] {$m_{X \rtimes B} \ox 1_X \ox 1_B$} (B2.north)
(D3.south) edge node[descr] {$m_{X \rtimes B}$} (B3.north)
(B2.south) edge node[descr] {$1_X \ox 1_B \ox 1_X \ox {\blue \epsilon_B}$} (A2.north)
(D4.south) edge node[descr] {$1_X \ox  \act$} (B4.north)
(B1.south) edge node[descr] {$1_X \ox m \ox 1_X  $} (A2.north west)
(B4.south) edge node[descr] {$m$} (A4.north)
(F4.south) edge node[descr] {$u_X \ox 1_B \ox \act$} (D4.north)
(E3.south) edge node[descr] {$1_X \ox 1_B \ox \act \ox 1_B$} (D3.north)
;
\end{tikzpicture} 
\caption{Associativity implies \eqref{ass action_1}}
\label{3.1thenasss}
 \end{figure}
The {\blue trapezoids $(A)$} commute thanks to \eqref{counit comultiplication} and \eqref{m et epsilon}, as it is shown in the following diagram
\begin{center}
\begin{tikzpicture}[descr/.style={fill=white},baseline=(A0.base),
xscale=1.8,yscale=1.7] 
\node (A0) at (0.5,0) {$ X \ox B \ox X$};
\node (A2) at (4.5,0) {$ X \ox B \ox X$};
\node (A3) at (6.5,0) {$X^2$};
\node (A4) at (8,0) {$X.$};
\node (B1) at (1.5,1) {$X \ox B^2 \ox X \ox B$};
\node (B2) at (4.5,1) {$ X \ox B \ox X \ox B^2 $};
\node (B3) at (7,1) {$X^2 \ox B^2$};
\node (C0) at (0.5,2) {$(X \ox B)^2$};
\node (C4) at (8,2) {$X \ox B $};
\node at (1.5,0.25) {$\eqref{counit comultiplication}$};
\node at (7.5,1.25) {$\eqref{m et epsilon}$};
\draw[commutative diagrams/.cd, ,font=\scriptsize]
(A0.east) edge[commutative diagrams/equal]  (A2.west);
\path[->,font=\scriptsize]
(A2.east) edge node[above] {$1_X \ox \act $} (A3.west)
(A3.east) edge node[above] {$m$} (A4.west)
(B1.east) edge node[above] {$1_X \ox 1_B \ox {\blue \sigma_{B,X}} \ox 1_B$} (B2.west)
(B2.east) edge node[above] {$1_X \ox \act  \ox (1_B)^2$} (B3.west)
(C0.east) edge node[above] {$m_{X \rtimes B}$} (C4.west)
(C0.south east) edge node[descr] {$1_X \ox \Delta \ox 1_X \ox 1_B$} (B1.north)
(B1.south east) edge node[descr] {$1_X \ox 1_B \ox {\blue \epsilon_B} \ox 1_X \ox {\blue \epsilon_B} \; \; \; \; \; \; \; \; \; \; \; \; \; \; $} (A2.north west)
(B3.south) edge node[descr] {$m\ox {\blue \epsilon_B} \ox {\blue \epsilon_B}$} (A4.north west)
(C0.south) edge node[descr,yshift=-15pt] {$1_X \ox 1_B \ox 1_X \ox {\blue \epsilon_B} $} (A0.north)
(B2.south) edge node[descr,xshift=15pt] {$1_X \ox 1_B \ox 1_X \ox {\blue \epsilon_B} \ox {\blue \epsilon_B}$} (A2.north)
(C4.south) edge node[descr] {$1_X \ox {\blue \epsilon_B}$} (A4.north)
(B3.north) edge node[descr] {$m \ox m $} (C4.south west)
;
\end{tikzpicture} 
 \end{center}
 \qed
\end{proof}

{\blue
\begin{remark}
In the symmetric monoidal category of sets, the conditions \eqref{ass action_1} and \eqref{ass action_2} correspond to the conditions \eqref{ass1 group} and \eqref{ass2 group} (considered in the more restricted setting of groups). 
\end{remark}
 }

Let us consider associative bialgebras. We define the categories $\mathsf{Act(Ass{\blue BiAlg_\C})}$ and $ \mathsf{SplitExt(Ass{\blue BiAlg_\C})}$. An object in $\mathsf{Act(Ass{\blue BiAlg_\C})}$, is an action of associative bialgebras (Definition \ref{def action}) satisfying \eqref{ass action_1} and \eqref{ass action_2}, the morphisms are the morphisms of $\mathsf{Act({\blue BiAlg_\C})}$. The category $ \mathsf{SplitExt(Ass{\blue BiAlg_\C})}$ is a full subcategory of $ \mathsf{SplitExt({\blue BiAlg_\C})}$ since the conditions $(6),(7)$ and $(8)$ become redundant. {\blue In particular, since Definition \ref{definition split extension} is a generalization of the split extensions of magmas introduced in \cite{GJS} (which are a generalization of the ``Schreier split epimorphisms'' of monoids), it is clear that the split extensions of associative bialgebras generalize the notion of  ``Schreier split epimorphisms'' of monoids introduced in \cite{BMS}.} 

\begin{corollary}
{\blue Let $\C$ be a symmetric monoidal category}. There is an equivalence between $\mathsf{SplitExt(Ass{\blue BiAlg_\C})}$ the category of split extensions of associative bialgebras {\blue in $\C$} and $\mathsf{Act(Ass{\blue BiAlg_\C})}$ the category of actions of associative bialgebras {\blue in $\C$}.
\end{corollary}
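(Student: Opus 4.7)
The plan is to derive the equivalence directly from Theorem~\ref{equi bialg} by restricting along the inclusions of the associative subcategories. Since the categories of actions (respectively, split extensions) of associative bialgebras are \emph{full} subcategories of their non-associative counterparts, no additional check on morphisms is needed: it suffices to verify that both functors $F$ and $G$ from Theorem~\ref{equi bialg} preserve associativity of objects.

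First, I would verify that $F$ restricts. Starting from a split extension whose three bialgebras are associative, Proposition~\ref{canonical} provides an isomorphism of bialgebras $\phi \colon X \rtimes B \to A$. Since $A$ is associative by assumption and $\phi$ is an algebra isomorphism, the multiplication $m_{X \rtimes B}$ must be associative as well. Lemma~\ref{ass bialgebra} then tells us that the action $\act = \lambda \cdot m \cdot (e \ox \kappa)$ provided by Proposition~\ref{Action bialgebra} satisfies the two extra conditions \eqref{ass action_1} and \eqref{ass action_2}, so $F$ indeed lands in $\mathsf{Act(AssBiAlg_\C)}$.

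Conversely, I would check that $G$ restricts. Starting from an action of associative bialgebras satisfying \eqref{ass action_1} and \eqref{ass action_2}, the ``if'' direction of Lemma~\ref{ass bialgebra} immediately yields that $m_{X \rtimes B}$ is associative. Since the other bialgebra axioms for $X \rtimes B$ do not depend on associativity, $X \rtimes B$ is an associative bialgebra and the induced split extension lies in $\mathsf{SplitExt(AssBiAlg_\C)}$.

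Finally, the natural isomorphisms witnessing $FG \cong 1$ and $GF \cong 1$ constructed in Theorem~\ref{equi bialg} involve only bialgebra isomorphisms (the maps $\phi$, $\psi$ and the identity \eqref{act split}), and therefore restrict to the associative subcategories without modification. No genuine obstacle arises here: the substantive content has already been isolated in Lemma~\ref{ass bialgebra}, which identifies the associativity of the semidirect product with precisely the two extra conditions on the action. \qed
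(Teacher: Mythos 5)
Your proposal is correct and follows essentially the same route as the paper, which simply invokes Lemma \ref{ass bialgebra} together with Theorem \ref{equi bialg}; you merely spell out the details (transporting associativity of $A$ to $X \rtimes B$ along $\phi$ for one direction, and using the converse direction of Lemma \ref{ass bialgebra} for the other). The fullness observation and the remark that the natural isomorphisms restrict are exactly the implicit content of the paper's one-line argument.
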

\begin{proof}
It is clear by applying Lemma \ref{ass bialgebra} and Theorem \ref{equi bialg}.  
\qed
\end{proof}

 \section{Split extensions of non-associative Hopf algebras}

In this section, we consider a similar result for non-associative Hopf algebras. We prove an equivalence between the category of split extensions of non-associative Hopf algebras and the category of actions of non-associative Hopf algebras. 

\begin{Convention} For the sake of simplicity, in this section ``Hopf algebra'' will mean ``non-associative Hopf algebra'' (unless the associativity is explicitly mentioned).
\end{Convention}

\begin{definition}\label{definition split extenion Hopf algebras}
A split extension of Hopf algebras is a split extension of bialgebras
\begin{equation}\label{split ext Hopf algebras}
\begin{tikzpicture}[descr/.style={fill=white},baseline=(A.base)] 
\node (A) at (0,0) {$A$};
\node (B) at (2.5,0) {$B$};
\node (C) at (-2.5,0) {$X$};
\path[dashed,->,font=\scriptsize]
([yshift=2pt]A.west) edge node[above] {$\lambda$} ([yshift=2pt]C.east);
\path[->,font=\scriptsize]
([yshift=-4pt]C.east) edge node[below] {$\kappa$} ([yshift=-4pt]A.west)
([yshift=-4pt]A.east) edge node[below] {$\alpha$} ([yshift=-4pt]B.west)
([yshift=2pt]B.west) edge node[above] {$e$} ([yshift=2pt]A.east);
\end{tikzpicture}
\end{equation} such that $X$, $A$, $B$ are Hopf algebras and $\kappa, \alpha, e$ are morphisms of Hopf algebras, with an additional condition of associativity (condition $(9')$) and an additional condition about the {\rouge left and right} antipodes (conditions (10') and (11')). More precisely, the split extension \eqref{split ext Hopf algebras} satisfies
\begin{itemize}
\item[(1')] $\lambda \cdot \kappa = 1_X$, $\alpha \cdot e =1_B$ 
\item[(2')] $\lambda \cdot e = u_X \cdot\epsilon_B $, $\alpha \cdot \kappa  = u_B \cdot \epsilon_X $
\item[(3')] $m \cdot ((\kappa\cdot \lambda) \otimes (e \cdot \alpha)) \cdot \Delta = 1_A$
\item[(4')]$ \lambda \cdot m \cdot (\kappa \otimes e) =  1_X \otimes \epsilon_B$
\item[(5')] $(1_B \otimes \lambda) \cdot (1_B \otimes m) \cdot ( 1_B \otimes e\otimes \kappa)  \cdot (\Delta \otimes 1_X)= (1_B \otimes \lambda) \cdot (1_B \otimes m) \cdot ( 1_B \otimes e\otimes \kappa) \cdot ({\blue \sigma_{B,B}} \otimes 1_X) \cdot (\Delta \otimes 1_X)$
\item[(6')] 
$m \cdot( m \otimes 1_A) \cdot (\kappa \otimes e \otimes 1_A) = m \cdot( 1_A \otimes m) \cdot (\kappa \otimes e \otimes 1_A)$
\item[(7')]
$m \cdot( m \otimes 1_A) \cdot (\kappa \otimes 1_A \otimes e) = m \cdot( 1_A \otimes m) \cdot (\kappa \otimes 1_A \otimes e)$ 
\item[(8')]
$m \cdot( m \otimes 1_A) \cdot (1_A \otimes \kappa \otimes e) = m \cdot( 1_A \otimes m) \cdot (1_A \otimes \kappa \otimes e)$
\item[(9')]
$m \cdot( m \otimes 1_A) \cdot (e \otimes 1_A \otimes \kappa) = m \cdot( 1_A \otimes m) \cdot (e \otimes 1_A \otimes \kappa)$
{\rouge \item[(10')] $ S_L \cdot \lambda \cdot m \cdot (e \ox \kappa) = \lambda \cdot m \cdot  (e \ox \kappa) \cdot (1_B \ox S_L)$,
\item[(11')] $ \epsilon_B \ox S_R = \lambda \cdot m \cdot  (e \ox \kappa) \cdot (S_R \ox S_R) \cdot (1_B \ox \lambda)\cdot (1_B \ox m) \cdot (1_B \ox e \ox \kappa)\cdot (\Delta \ox 1_X)$,}
\item[(12')] $\lambda$ is a morphism of coalgebras preserving the unit.
\end{itemize} 
\end{definition}

The following definition is inspired by the definition given in \cite{Majid} in the case of associative Hopf algebras.

\begin{definition}\label{def Hopf action} 
Let $X$ and $B$ be Hopf algebras, $\act \colon B \otimes X \rightarrow X$ is an action of Hopf algebras if it is an action of bialgebras such that the following additional conditions are satisfied
\begin{equation}\label{ass action}
 \act \cdot (1_B \ox \act) = \act \cdot (m \ox 1_X) ,
\end{equation}
\begin{equation}\label{coass action}
\act \cdot (1_B \ox m) = m \cdot (\act \ox \act) \cdot (1_B \ox {\blue \sigma_{B,X}} \ox 1_X) \cdot (\Delta \ox 1_X \ox 1_X),
\end{equation}
\begin{equation}\label{S_L action}
\act \cdot (1_B \ox S_L) = S_L \cdot \act,
\end{equation}
\begin{equation}\label{S_R action}
\act \cdot (S_R \ox S_R)\cdot (1_B \ox \act) \cdot (\Delta \ox 1_X) = {\blue \epsilon_B} \ox S_R .
\end{equation}
These conditions can be expressed by the commutativity of the four diagrams below.
\begin{center}
\begin{tikzpicture}[descr/.style={fill=white},baseline=(A.base),yscale=0.85] 
\node (A) at (0,0) {$B \ox X$};
\node (B) at (3,0) {$X$};
\node (C) at (3,3) {$B\ox X$};
\node (D) at (0,3) {$B \ox B \ox X$};
\node at (1.5,1.5) {$\eqref{ass action}$};
\path[->,font=\scriptsize]
(A.east) edge node[above] {$\act$} (B.west)
(D.east) edge node[above] {$1_B \ox \act$} (C.west)
(C.south) edge node[descr] {$\act$} (B.north)
(D.south) edge node[descr] {$m \ox 1_X$} (A.north)
;
\end{tikzpicture} 
\qquad
\begin{tikzpicture}[descr/.style={fill=white},baseline=(A.base),yscale=0.85,xscale=1.2] 
\node (A) at (0,0) {$B \ox X$};
\node (B) at (6,0) {$X$};
\node (C) at (6,1.5) {$X^2$};
\node (D) at (6,3) {$(B \ox X)^2$};
\node (E) at (3,3) {$B^2 \ox X^2$};
\node (F) at (0,3) {$B \ox X\ox X$};
\node at (3,1.5) {$\eqref{coass action}$};
\path[->,font=\scriptsize]
(A.east) edge node[above] {$\act$} (B.west)
(F.east) edge node[above] {$\Delta \ox 1_X \ox 1_X$} (E.west)
(E.east) edge node[above] {$1_B \ox {\blue \sigma_{B,X}}  \ox 1_X$} (D.west)
(D.south) edge node[descr] {$\act \ox \act$} (C.north)
(C.south) edge node[descr] {$m$} (B.north)
(F.south) edge node[descr] {$1_B \ox m$} (A.north)
;
\end{tikzpicture} 
\qquad
{\rouge \begin{tikzpicture}[descr/.style={fill=white},baseline=(A.base),yscale=0.85] 
\node (A) at (0,0) {$B \ox X$};
\node (B) at (3,0) {$X$};
\node (C) at (3,3) {$X$};
\node (D) at (0,3) {$B \ox X$};
\node at (1.5,1.5) {$\eqref{S_L action}$};
\path[->,font=\scriptsize]
(A.east) edge node[above] {$\act$} (B.west)
(D.east) edge node[above] {$ \act$} (C.west)
(C.south) edge node[descr] {$S_L$} (B.north)
(D.south) edge node[descr] {$1_B \ox S_L$} (A.north)
;
\end{tikzpicture}
\qquad
\begin{tikzpicture}[descr/.style={fill=white},baseline=(A.base),yscale=0.85,xscale=1.6] 
\node (A) at (0,0) {$X$};
\node (B) at (3,0) {$B \ox X$};
\node (C) at (3,3) {$ B  \ox X$};
\node (C') at (1.5,3) {$B^2 \ox X$};
\node (D) at (0,3) {$B \ox X$};
\node at (1.5,1.5) {$\eqref{S_R action}$};
\path[->,font=\scriptsize]
(B.west) edge node[above] {$\act$} (A.east)
(D.east) edge node[above] {$ \Delta \ox 1_X$} (C'.west)
(C'.east) edge node[above] {$1_B \ox  \act$} (C.west)
(C.south) edge node[descr] {$S_R \ox S_R$} (B.north)
(D.south) edge node[descr] {${\blue \epsilon_B} \ox S_R$} (A.north)
;
\end{tikzpicture}}
 \end{center}
\end{definition}
{\rouge Note that whenever $S_L = S_R$, the condition \eqref{S_R action} follows from \eqref{ass action} and \eqref{S_L action}.}
{\orange We notice that, when we consider associative Hopf algebras, the conditions \eqref{S_L action} and \eqref{S_R action} are trivially satisfied thanks to the uniqueness of the antipode.} 

We define the map $\Theta \colon B \ox X \rightarrow X \ox B$ as the composition \[\Theta {\blue \coloneqq} (\act \ox 1_B) \cdot (1_B \ox {\blue \sigma_{B,X}}) \cdot (\Delta \ox 1_X).\] We will use this map to obtain shorter computations.
We re-formulate the conditions \eqref{coass action}, \eqref{conditionaction1}, \eqref{ass action} and \eqref{condition action = e} in terms of $\Theta$. These new conditions will help us to prove that the semi-direct product is a Hopf algebra when we construct it with an action as defined above (Definition \ref{def Hopf action}).
\begin{lemma}
Let $\act \colon B \ox X \rightarrow X$ be an action of Hopf algebras, the morphism $\Theta {\blue \coloneqq} (\act \ox 1_B) \cdot (1_B \ox {\blue \sigma_{B,X}}) \cdot (\Delta \ox 1_X)$ satisfies the following conditions
\begin{equation}\label{theta multiplication x}
(m \ox 1_B) \cdot (1_X \ox \Theta) \cdot (\Theta \ox 1_X)= \Theta \cdot (1_B \ox m),
\end{equation} 
 \begin{equation}\label{theta unit B}
\Theta \cdot (u_B \ox 1_X) = 1_X \otimes u_B,
\end{equation}
 \begin{equation}\label{theta multiplication B}
(1_X \ox m) \cdot (\Theta \ox 1_B) \cdot (1_B \ox \Theta)= \Theta \cdot (m \ox 1_X),
\end{equation}
 \begin{equation}\label{theta unit X}
\Theta \cdot (1_B \ox u_X) = u_X \otimes 1_B ,
\end{equation} 
\begin{center}
\begin{tikzpicture}[descr/.style={fill=white},baseline=(A.base),yscale=0.85] 
\node (A) at (0,0) {$B \ox X$};
\node (B) at (6,0) {$X \ox B$};
\node (D) at (6,3) {$X \ox X \ox B$};
\node (E) at (3,3) {$X \ox B \ox X$};
\node (F) at (0,3) {$B \ox X\ox X$};
\path[->,font=\scriptsize]
(A.east) edge node[above] {$\Theta$} (B.west)
(F.east) edge node[above] {$\Theta \ox 1_X$} (E.west)
(E.east) edge node[above] {$1_X \ox \Theta$} (D.west)
(D.south) edge node[descr] {$m \ox 1_B$} (B.north)
(F.south) edge node[descr] {$1_B \ox m$} (A.north)
;
\end{tikzpicture} 
\qquad
\begin{tikzpicture}[descr/.style={fill=white},baseline=(C.base),scale=0.85] 
\node (A) at (0,0) {$ X$};
\node (B) at (3,0) {$B \ox X$};
\node (C) at (3,-3) {$X \ox B$};
\path[->,font=\scriptsize]
(A.east) edge node[above] {$u_B \ox 1_X$} (B.west)
(B.south) edge node[descr] {$\Theta$} (C.north)
(A.south east) edge node[descr] {$1_X \ox u_B$} (C.north west)
;
\end{tikzpicture} 
 \quad
\begin{tikzpicture}[descr/.style={fill=white},baseline=(A.base)] 
\node (A) at (0,0) {$B \ox X$};
\node (B) at (6,0) {$X \ox B$};
\node (D) at (6,3) {$X \ox B \ox B$};
\node (E) at (3,3) {$B \ox X \ox B$};
\node (F) at (0,3) {$B \ox B \ox X$};
\path[->,font=\scriptsize]
(A.east) edge node[above] {$\Theta$} (B.west)
(F.east) edge node[above] {$1_B \ox \Theta $} (E.west)
(E.east) edge node[above] {$ \Theta \ox 1_B$} (D.west)
(D.south) edge node[descr] {$1_X \ox m $} (B.north)
(F.south) edge node[descr] {$m \ox 1_X$} (A.north)
;
\end{tikzpicture} 
\qquad
\begin{tikzpicture}[descr/.style={fill=white},baseline=(C.base),scale=0.85] 
\node (A) at (0,0) {$ B$};
\node (B) at (3,0) {$B \ox X$};
\node (C) at (3,-3) {$X \ox B$};
\path[->,font=\scriptsize]
(A.east) edge node[above] {$1_B \ox u_X$} (B.west)
(B.south) edge node[descr] {$\Theta$} (C.north)
(A.south east) edge node[descr] {$u_X \ox 1_B$} (C.north west)
;
\end{tikzpicture}
 \end{center}
 \end{lemma}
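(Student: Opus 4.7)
The plan is to verify each of the four identities by direct diagrammatic computation, unfolding the definition $\Theta = (\act \ox 1_B) \cdot (1_B \ox \sigma_{B,X}) \cdot (\Delta \ox 1_X)$ on both sides and then invoking the appropriate axiom of an action of Hopf algebras (Definition \ref{def Hopf action}) together with the (co)unit and (co)multiplication compatibilities of the bialgebra structure. Each of the four identities corresponds, after stripping off the auxiliary braidings and comultiplication of $B$, to exactly one action axiom.

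For the two unit equations the argument is short. To prove \eqref{theta unit B}, precompose $\Theta$ with $u_B \ox 1_X$: the term $\Delta \cdot u_B$ reduces to $u_B \ox u_B$ by \eqref{u et delta}, the naturality of $\sigma$ moves the unit $u_B$ across $1_X$, and finally \eqref{conditionaction1} gives $\act \cdot (u_B \ox 1_X) = 1_X$, leaving $1_X \ox u_B$. For \eqref{theta unit X}, precompose with $1_B \ox u_X$: by naturality of $\sigma$ the unit passes to the left, producing $(\act \cdot (1_B \ox u_X) \ox 1_B) \cdot (\Delta \ox I)$; the axiom \eqref{condition action = e} rewrites this as $(u_X \cdot \epsilon_B \ox 1_B) \cdot \Delta$, which equals $u_X \ox 1_B$ by counitality \eqref{counit comultiplication}.

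The two multiplicative identities are where the real content lies. Equation \eqref{theta multiplication B} is essentially an immediate consequence of \eqref{ass action}: expanding $(1_X \ox m) \cdot (\Theta \ox 1_B) \cdot (1_B \ox \Theta)$ one collects two applications of $\act$ and one application of $\Delta$ on each of the two $B$-factors; using the fact that $m$ is a coalgebra morphism \eqref{m et delta}, coassociativity \eqref{coass comultiplication}, and the hexagon identities for the symmetric braiding $\sigma$, the composite reorganizes into $(\act \cdot (1_B \ox \act) \ox 1_B) \cdot (1_B \ox \sigma_{B,B} \ox 1_X) \cdot ((\Delta \cdot m) \ox 1_X)$, and then \eqref{ass action} collapses the inner two factors to $\act \cdot (m \ox 1_X)$, which is exactly $\Theta \cdot (m \ox 1_X)$. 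Equation \eqref{theta multiplication x} corresponds in the same way to the compatibility \eqref{coass action}: expanding the left-hand side, one of the two factors of $\act$ produced by $\Theta \ox 1_X$ feeds into a subsequent multiplication $m$ on $X$, and the hypothesis \eqref{coass action} is precisely the statement that this composite equals $\act \cdot (1_B \ox m)$ after an appropriate rearrangement of the braidings coming from $1_B \ox \sigma_{B,X}$ and the comultiplication $\Delta$.

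The main obstacle will be \eqref{theta multiplication x}, which is a Yang--Baxter--flavored identity combining the comultiplication of $B$, the braiding $\sigma$, and the action $\act$; writing it out rigorously requires a sizable commutative diagram in which one applies \eqref{coass action} together with coassociativity of $\Delta$ and both hexagon axioms of the symmetric braiding. The computations are mechanical but bookkeeping-heavy, and are most cleanly rendered as a large commutative diagram in $\C$ of the same style as those in Section 2.
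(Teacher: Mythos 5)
Your proposal is correct and follows essentially the same route as the paper: the paper proves \eqref{theta multiplication x} by a large commutative diagram whose key square is \eqref{coass action} (together with coassociativity \eqref{coass comultiplication}), proves \eqref{theta unit B} from \eqref{conditionaction1} and \eqref{u et delta}, and states that \eqref{theta multiplication B} and \eqref{theta unit X} follow analogously from \eqref{ass action} and \eqref{condition action = e} — exactly the axiom assignments you give. The only blemish is that your displayed intermediate composite for \eqref{theta multiplication B} does not typecheck as written (after $(\Delta\cdot m)\ox 1_X$ there are only three tensor factors, so $1_B\ox\sigma_{B,B}\ox 1_X$ cannot be applied to it); the correct intermediate keeps $\Delta\ox\Delta$ expanded via \eqref{m et delta} before collapsing the two nested actions with \eqref{ass action}.
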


\begin{proof}
We only show the two first equalities since the computations are similar. First, we prove \eqref{theta multiplication x} via the following diagram, where the key part is given by \eqref{coass action}.
\begin{center}
\begin{tikzpicture}[descr/.style={fill=white},baseline=(A.base),yscale=0.8,xscale=1.15] 
\node (A) at (0.65,0) {$X \ox B \ox X$};
\node (C) at (4,0) {$X \ox B^2 \ox X$};
\node (D) at (7.5,0) {$(X \ox B)^2$};
\node (E) at (12.5,0) {$X^2 \ox  B$};
\node (F) at (12.5,2) {$X^2 \ox  B$};
\node (G) at (12.5,8) {$X \ox B$};
\node (H) at (10,8) {$B \ox X \ox B$};
\node (I) at (7.5,8) {$B^2 \ox X$};
\node (J) at (5,8) {$B^2  \ox X^2$};
\node (L) at (0.65,8) {$B \ox X^2$};
\node (M) at (0.65,4) {$B^2 \ox X^2$};
\node (O) at (5,4) {$B^3 \ox X^2$};
\node (P) at (10,6) {$B \ox X^2\ox B$};
\node (Q) at (10,4) {$B^2 \ox X^2 \ox B$};
\node (R) at (0.65,2) {$(B \ox X)^2 $};
\node (T) at (5,2) {$B \ox X \ox B^2 \ox X$};
\node (U) at (10,2) {$(B \ox X)^2 \ox B$};
\node at (2,5) {$\eqref{coass comultiplication}$};
\node at (11.5,5.5) {$\eqref{coass action}$};
\draw[->] (A.south east) to[bend right=12]node[descr,scale=0.8] {$ {\blue 1_X \ox \Theta}$} (E.south west);
\draw[->] (L.north) to[bend left=12]node[descr,scale=0.8] {${\blue \Theta  \cdot (1_B \ox m)}$} (G.north west);
\draw[->] (L.south west) to[bend right=10]node[descr,scale=0.8,xshift=3mm,yshift=8mm] {$ {\blue \Theta \ox 1_X}$} (A.north west);
\draw[commutative diagrams/.cd, ,font=\scriptsize]
(F.south) edge[commutative diagrams/equal]  (E.north);
\path[->,font=\scriptsize]
(A.east) edge node[above] {$1_X \ox \Delta \ox 1_X$} (C.west)
(C.east) edge node[above] {$1_X \ox 1_B \ox {\blue \sigma_{B,X}}$} (D.west)
(D.east) edge node[above] {$1_X \ox \act  \ox 1_B$} (E.west)
(R.east) edge node[above] {$1_B \ox 1_X \ox \Delta \ox 1_X$} (T.west)
(T.east) edge node[above] {$1_B \ox 1_X \ox 1_B \ox {\blue \sigma_{B,X}}$} (U.west)
(U.east) edge node[above] {$\act^2  \ox 1_B$} (F.west)
(M.east) edge node[above] {$1_B \ox \Delta \ox 1_X \ox 1_X$} (O.west)
(O.east) edge node[above] {$ 1_B \ox 1_B \ox \sigma_{B,XX}$} (Q.west)
(L.east) edge node[above] {$\Delta \ox 1_X \ox  1_X$} (J.west)
(J.east) edge node[above] {$1_B \ox 1_B \ox m$} (I.west)
(I.east) edge node[above] {$1_B \ox {\blue \sigma_{B,X}}$} (H.west)
(H.east) edge node[above] {$\act \ox 1_B$} (G.west)
(L.south) edge node[descr] {$\Delta \ox 1_X \ox 1_X$} (M.north)
(M.south) edge node[descr] {$1_B \ox {\blue \sigma_{B,X}} \ox 1_X$} (R.north)
(R.south) edge node[descr] {$\act \ox 1_B \ox 1_X$} (A.north)
(J.south) edge node[descr] {$\Delta \ox 1_B \ox 1_X \ox 1_X$} (O.north)
(O.south) edge node[descr] {$1_B \ox \sigma_{BB,X} \ox 1_X$} (T.north)
(P.south) edge node[descr] {$\Delta \ox 1_X \ox 1_X \ox 1_B$} (Q.north)
(Q.south) edge node[descr] {$1_B \ox {\blue \sigma_{B,X}} \ox 1_X \ox 1_B$} (U.north)
(P.south) edge node[descr] {$\Delta \ox 1_X \ox 1_X \ox 1_B$} (Q.north)
(J.south east) edge node[descr] {$1_B \ox \sigma_{B,XX} $} (P.north west)
(P) edge node[descr] {$1_B \ox m \ox 1_B$} (H)
(F.north) edge node[descr] {$m \ox 1_B$} (G.south)
;
\end{tikzpicture}
 \end{center}
 The condition \eqref{conditionaction1} provides directly the condition \eqref{theta unit B} as we can see in the following diagram
 
\begin{center}
\begin{tikzpicture}[descr/.style={fill=white},baseline=(E.base),scale=0.8] 
\node (A) at (-1,0) {$ X$};
\node (B) at (3,0) {$B \ox X$};
\node (C) at (3,-2) {$B^2 \ox X$};
\node (D) at (3,-4) {$B \ox X \ox B$};
\node (E) at (3,-6) {$X \ox B.$};
\node (F) at (-1,-6) {$X $};
\node at (1,-5) {$\eqref{conditionaction1}$};
\node at (1,-1) {$\eqref{u et delta}$};
\draw[commutative diagrams/.cd, ,font=\scriptsize]
(A.south) edge[commutative diagrams/equal]  (F.north);
\path[->,font=\scriptsize]
(A.east) edge node[above] {$u_B \ox 1_X$} (B.west)
(A.south east) edge node[descr] {$u_B \ox 1_X \ox u_B$} (D.north west)
(B.south) edge node[descr] {$\Delta \ox 1_X$} (C.north)
(C.south) edge node[descr] {$1_B \ox {\blue \sigma_{B,X}}$} (D.north)
(D.south) edge node[descr] {$\act \ox 1_B$} (E.north)
(F.east) edge node[above] {$1_X \ox u_B$} (E.west)
;
\end{tikzpicture}
 \end{center}

The other equalities follow from \eqref{ass action} and \eqref{condition action = e}, the proofs being similar to the ones given above.
%
\qed
\end{proof}

Starting from an action of Hopf algebras, we can define a split extension of Hopf algebras 
\begin{equation}\label{split ext Hopf algebra}
\begin{tikzpicture}[descr/.style={fill=white},baseline=(A.base)] 
\node (A) at (0,0) {$X \rtimes B$};
\node (B) at (2.5,0) {$B$};
\node (C) at (-2.5,0) {$X$};
\path[dashed,->,font=\scriptsize]
([yshift=2pt]A.west) edge node[above] {$\pi_1$} ([yshift=2pt]C.east)
;
\path[->,font=\scriptsize]
([yshift=-4pt]C.east) edge node[below] {$i_1$} ([yshift=-4pt]A.west)
([yshift=-4pt]A.east) edge node[below] {$\pi_2$} ([yshift=-4pt]B.west)
([yshift=2pt]B.west) edge node[above] {$i_2$} ([yshift=2pt]A.east);
\end{tikzpicture} 
\end{equation}
where the structure of $X\rtimes B$ is given by  
\begin{align*}
m_{X \rtimes B} &= (m \otimes m) \cdot (1_X \ox \Theta \ox 1_B)\\
u_{X \rtimes B} &=u_X \otimes u_B,\\
\Delta_{X \rtimes B} &= (1_X \otimes {\blue \sigma_{X,B}} \otimes 1_B) \cdot (\Delta \otimes \Delta),\\
\epsilon_{X \rtimes B}&= \epsilon_X \otimes \epsilon_B,\\
 S_{X \rtimes B_L} &= \Theta \cdot (S_L \ox S_L) \cdot {\blue \sigma_{X,B}},\\
S_{X \rtimes B_R} &= \Theta \cdot (S_R \ox S_R) \cdot {\blue \sigma_{X,B}}.
\end{align*}
Thanks to Lemma \ref{obs semi direct bialg}, we already know that \eqref{split ext Hopf algebra} is a split extension of bialgebras. {\green It is also easy to check that $i_1$, $i_2$ and $\pi_2$ are morphisms of Hopf algebras as it is shown in the following diagrams for the left antipode, similar computations work for the right antipode,
\begin{center}
\begin{tikzpicture}[descr/.style={fill=white},baseline=(A.base),xscale=1,yscale=1.4] 
\node (A0) at (0,-1) {$ B$};
\node (A2) at (5,-1) {$  X \ox B$};
\node (B0) at (0,0) {$ B$};
\node (B2) at (5,0) {$   B \ox X$};
\node (C0) at (0,1) {$ B$};
\node (C1) at (2.5,1) {$ X \ox B$};
\node (C2) at (5,1) {$   B \ox X$};
\node at (2.5,-0.5) {$\eqref{theta unit X}$};
\draw[commutative diagrams/.cd, ,font=\scriptsize]
(B0.south) edge[commutative diagrams/equal]  (A0.north)
;
\path[->,font=\scriptsize]
(A0.east) edge node[above] {$u_X \ox 1_B $} (A2.west)
(B0.east) edge node[above] {$ 1_B\ox u_X $} (B2.west)
(C0.east) edge node[above] {$u_X \ox 1_B $} (C1.west)
(C1.east) edge node[above] {${\blue \sigma_{X,B}} $} (C2.west)
(C0.south) edge node[descr] {$ S_L $} (B0.north)
(C2.south) edge node[descr] {$ S_L \ox S_L $} (B2.north)
(B2.south) edge node[descr] {$\Theta$} (A2.north)
;
\end{tikzpicture}
\qquad
\begin{tikzpicture}[descr/.style={fill=white},baseline=(A.base),xscale=1,yscale=1.4] 
\node (A0) at (0,-1) {$ X$};
\node (A2) at (5,-1) {$  X \ox B$};
\node (B0) at (0,0) {$ X$};
\node (B2) at (5,0) {$   B \ox X$};
\node (C0) at (0,1) {$ X$};
\node (C1) at (2.5,1) {$ X \ox B$};
\node (C2) at (5,1) {$   B \ox X$};
\node at (2.5,-0.5) {$\eqref{theta unit B}$};
\draw[commutative diagrams/.cd, ,font=\scriptsize]
(B0.south) edge[commutative diagrams/equal]  (A0.north)
;
\path[->,font=\scriptsize]
(A0.east) edge node[above] {$1_X \ox u_B $} (A2.west)
(B0.east) edge node[above] {$ u_B\ox 1_X $} (B2.west)
(C0.east) edge node[above] {$1_X \ox u_B $} (C1.west)
(C1.east) edge node[above] {${\blue \sigma_{X,B}} $} (C2.west)
(C0.south) edge node[descr] {$ S_L $} (B0.north)
(C2.south) edge node[descr] {$ S_L \ox S_L $} (B2.north)
(B2.south) edge node[descr] {$\Theta$} (A2.north)
;
\end{tikzpicture}\qquad
\begin{tikzpicture}[descr/.style={fill=white},baseline=(A.base),xscale=1,yscale=1.4] 
\node (X0) at (0,-2) {$ B$};
\node (X2) at (5,-2) {$  X \ox B$};
\node (A0) at (0,-1) {$ B$};
\node (A2) at (5,-1) {$ B \ox  X \ox B$};
\node (B0) at (0,0) {$ B$};
\node (B2) at (5,0) {$   B^2 \ox X$};
\node (C0) at (0,1) {$ X \ox B$};
\node (C1) at (2.5,1) {$  B \ox X $};
\node (C2) at (5,1) {$  B \ox X $};
\node at (2.5,-1.5) {$\eqref{eps et act}$};
\node at (3.5,-0.5) {$\eqref{counit comultiplication}$};
\draw[commutative diagrams/.cd, ,font=\scriptsize]
(A0.south) edge[commutative diagrams/equal]  (X0.north)
;
\path[->,font=\scriptsize]
(X2.west) edge node[above] {${\blue \epsilon_X} \ox 1_B $} (X0.east)
(A2.west) edge node[above] {${\blue \epsilon_B} \ox {\blue \epsilon_X} \ox 1_B $} (A0.east)
(C0.east) edge node[above] {$ {\blue \sigma_{X,B}} $} (C1.west)
(C1.east) edge node[above] {$S_L \ox S_L $} (C2.west)
(C0.south) edge node[descr] {$ {\blue \epsilon_X} \ox 1_B $} (B0.north)
(B0.south) edge node[descr] {$ S_L $} (A0.north)
(C2.south) edge node[descr] {$ \Delta \ox 1_X $} (B2.north)
(B2.south) edge node[descr] {$1_B \ox {\blue \sigma_{B,X}}$} (A2.north)
(A2.south) edge node[descr] {$\act \ox 1_B$} (X2.north)
(C2.south west) edge node[descr] {$1_B \ox {\blue \epsilon_X}$} (A0.north east)
;
\end{tikzpicture}.
\end{center}
} Furthermore, {\orange thanks to \eqref{coco} {\rouge one can show that} $S_{X \rtimes B_L}$ and $S_{X \rtimes B_R}$ are antihomomorphisms of coalgebras and thanks to \eqref{coco}, \eqref{S_L action},  \eqref{S_R action}, \eqref{theta multiplication x} and \eqref{theta multiplication B} {\rouge one can show that} they are antihomomorphisms of algebras. Moreover, we check that the above construction satisfies the antipode conditions \eqref{antipode} } thanks to the following two diagrams
\begin{center}
\begin{tikzpicture}[descr/.style={fill=white},baseline=(A.base),
xscale=1.25,yscale=0.8] 
\node (A) at (1.2,0) {$ X \ox B$};
\node (A') at (3,0) {$ X^2 \ox B$};
\node (B) at (6,0) {$ X^2$};
\node (C) at (10,0) {$X^2 \ox B$};
\node (D) at (12.5,0) {$X \ox B$};
\node (E) at (12.5,2.5) {$X^2 \ox B^2$};
\node (F) at (12.5,5) {$(X \ox B)^2$};
\node (G) at (10,5) {$X \ox B^2 \ox X$}; 
\node (H) at (7.5,5) {$X \ox B^2 \ox X  $};
\node (I) at (5,5) {$(X \ox B)^2$};
\node (J) at (2.5,5) {$X^2 \ox B^2 $};
\node (K) at (1.2,5) {$X \ox B $}; 
\node (L) at (10,2.5) {$X \ox B \ox X$}; 
\node (M) at (7.5,2.5) {$X^2 \ox B  $};
\node (N) at (5,2.5) {$X^2 \ox B^2$};
\node (O) at (2.5,2.5) {$X^2 \ox B^2 $};
\node at (2.5,1.25) {$\eqref{antipode}$};
\node at (9,0.75) {$\eqref{theta unit B}$};
\node at (11.25,1.25) {$\eqref{theta multiplication B}$};
\draw[->] (K.north) to[bend left=15]node[descr,scale=0.8] {${\blue \Delta_{X \rtimes B}}$} ([xshift=-1pt]I.north);
\draw[->] ([xshift=1pt]I.north) to[bend left=15]node[descr,scale=0.8] {$ {\blue 1_X \ox 1_B \ox S_{X \rtimes B_R}}$} (F.north);
\draw[commutative diagrams/.cd, ,font=\scriptsize]
(J.south) edge[commutative diagrams/equal]  (O.north)
(K.south) edge[commutative diagrams/equal]  (A.north);
\draw[->] (A.south east) to[bend right=12]node[descr,scale=0.8] {$ {\blue (u_X \ox u_B) \cdot ({\blue \epsilon_X} \ox {\blue \epsilon_B})}$} (D.south west);
\path[->,font=\scriptsize]
(A.east) edge node[above] {$\Delta \ox 1_B $} (A'.west)
(A'.east) edge node[above] {$1_X \ox S_R \ox {\blue \epsilon_B}$} (B.west)
(B.east) edge node[above] {$1_X \ox 1_X \ox u_B$} (C.west)
(C.east) edge node[above] {$m \ox 1_B$} (D.west)
(K.east) edge node[below] {$\Delta^2$} (J.west)
(J.east) edge node[above] {$1_X \ox {\blue \sigma_{X,B}} \ox 1_B$} (I.west)
(I.east) edge node[above] {$1_X \ox 1_B \ox {\blue \sigma_{X,B}}$} (H.west)
(H.east) edge node[above] {$1_X \ox 1_B \ox S_R^2 $} (G.west)
(O.east) edge node[above] {$1_X \ox S_R \ox 1_B \ox S_R$} (N.west)
(N.east) edge node[above] {$1_X \ox 1_X \ox m$} (M.west)
(M.east) edge node[above] {$1_X \ox {\blue \sigma_{X,B}}$} (L.west)
(G.east) edge node[above] {$1_X \ox 1_B \ox \Theta $} (F.west)
(E.south) edge node[descr] {$m \ox m$} (D.north)
(F.south) edge node[descr] {$1_X \ox \Theta \ox 1_B$} (E.north)
(G.south) edge node[descr] {$1_X \ox m \ox 1_X$} (L.north)
(L.south) edge node[descr] {$1_X \ox \Theta$} (C.north)
(B.north east) edge node[descr] {$1_X \ox u_B \ox 1_X $} (L.south west)
;
\end{tikzpicture} 
 \end{center}
\begin{center}
\begin{tikzpicture}[descr/.style={fill=white},baseline=(A.base),
xscale=1.25,yscale=0.8] 
\node (A) at (1.2,0) {$ X \ox B$};
\node (A') at (3,0) {$ X \ox B^2$};
\node (B) at (6,0) {$ B^2$};
\node (C) at (10,0) {$X \ox B^2$};
\node (D) at (12.5,0) {$X \ox B.$};
\node (E) at (12.5,2.5) {$X^2 \ox B^2$};
\node (F) at (12.5,5) {$(X \ox B)^2$};
\node (G) at (10,5) {$B \ox X^2 \ox B $}; 
\node (H) at (7.5,5) {$B \ox X^2 \ox B  $};
\node (I) at (5,5) {$(X \ox B)^2$};
\node (J) at (2.5,5) {$X^2 \ox B^2 $};
\node (K) at (1.2,5) {$X \ox B $}; 
\node (L) at (10,2.5) {$B \ox X \ox B$}; 
\node (M) at (7.5,2.5) {$X \ox B^2  $};
\node (N) at (5,2.5) {$X^2 \ox B^2$};
\node (O) at (2.5,2.5) {$X^2 \ox B^2 $};
\node at (2.5,1.25) {$\eqref{antipode}$};
\node at (9,0.75) {$\eqref{theta unit X}$};
\node at (11.25,1.25) {$\eqref{theta multiplication x}$};
\draw[commutative diagrams/.cd, ,font=\scriptsize]
(J.south) edge[commutative diagrams/equal]  (O.north)
(K.south) edge[commutative diagrams/equal]  (A.north);
\draw[->] (K.north) to[bend left=15]node[descr,scale=0.8] {${\blue \Delta_{X \rtimes B}}$} ([xshift=-1pt]I.north);
\draw[->] ([xshift=1pt]I.north) to[bend left=15]node[descr,scale=0.8] {${\blue S_{X \rtimes B_L} \ox  1_X \ox 1_B}$} (F.north);
\draw[->] (A.south east) to[bend right=12]node[descr,scale=0.8] {${\blue (u_X \ox u_B) \cdot ({\blue \epsilon_X} \ox {\blue \epsilon_B})}$} (D.south west);
\path[->,font=\scriptsize]
(A.east) edge node[above] {$1_X \ox \Delta $} (A'.west)
(A'.east) edge node[above] {${\blue \epsilon_X} \ox S_L \ox 1_B$} (B.west)
(B.east) edge node[above] {$u_X \ox 1_B \ox 1_B$} (C.west)
(C.east) edge node[above] {$1_X \ox m$} (D.west)
(K.east) edge node[below] {$\Delta^2$} (J.west)
(J.east) edge node[above] {$1_X \ox {\blue \sigma_{X,B}} \ox 1_B$} (I.west)
(I.east) edge node[above] {${\blue \sigma_{X,B}} \ox 1_X \ox 1_B $} (H.west)
(H.east) edge node[above] {$ S_L^2 \ox 1_X \ox 1_B $} (G.west)
(O.east) edge node[above] {$S_L \ox 1_X \ox S_L \ox 1_B$} (N.west)
(N.east) edge node[above] {$m \ox 1_B \ox 1_B$} (M.west)
(M.east) edge node[above] {$ {\blue \sigma_{X,B}} \ox 1_B$} (L.west)
(G.east) edge node[above] {$\Theta \ox 1_X \ox 1_B  $} (F.west)
(E.south) edge node[descr] {$m \ox m$} (D.north)
(F.south) edge node[descr] {$1_X \ox \Theta \ox 1_B$} (E.north)
(G.south) edge node[descr] {$1_B \ox m \ox 1_B$} (L.north)
(L.south) edge node[descr] {$\Theta \ox 1_B $} (C.north)
(B.north east) edge node[descr] {$1_B \ox u_X \ox 1_B $} (L.south west)
;
\end{tikzpicture}
 \end{center}

Moreover, the conditions \eqref{theta multiplication B} and \eqref{theta multiplication x} imply that 
\begin{tikzpicture}[descr/.style={fill=white},baseline=(A.base)] 
\node (A) at (0,0) {$X \rtimes B$};
\node (B) at (2.5,0) {$B$};
\node (C) at (-2.5,0) {$X$};
\path[dashed,->,font=\scriptsize]
([yshift=2pt]A.west) edge node[above] {$\pi_1$} ([yshift=2pt]C.east)
;
\path[->,font=\scriptsize]
([yshift=-4pt]C.east) edge node[below] {$i_1$} ([yshift=-4pt]A.west)
([yshift=-4pt]A.east) edge node[below] {$\pi_2$} ([yshift=-4pt]B.west)
([yshift=2pt]B.west) edge node[above] {$i_2$} ([yshift=2pt]A.east);
\end{tikzpicture} 
 satisfies the condition $(9')$ of the Definition \ref{definition split extenion Hopf algebras} as it is shown in the diagram below.
\begin{center}
\begin{tikzpicture}[descr/.style={fill=white},baseline=(A.base),
xscale=1.5,yscale=1.1] 
\node (A) at (0.5,-1) {$ (B \ox X)^2$};
\node (A') at (3.5,-1) {$  (X \ox B)^3$};
\node (B) at (7,-1) {$ X^2 \ox B^2 \ox X \ox B $};
\node (C) at (9.5,-1) {$(X \ox B)^2$};
\node (D) at (9.5,0.5) {$X^2 \ox B^2$};
\node (E) at (9.5,3) {$X \ox B$};
\node (F) at (9.5,5.5) {$X^2 \ox B^2$};
\node (G) at (9.5,7) {$(X \ox B)^2 $}; 
\node (H) at (7,7) {$X \ox B \ox X^2 \ox B^2  $};
\node (I) at (3.5,7) {$(X \ox B)^3$};
\node (J) at (0.5,7) {$(B \ox X)^2$};
\node (K) at (0.5,5.5) {$(B \ox X)^2 $}; 
\node (L) at (2.5,5.5) {$B \ox X^2 \ox B$}; 
\node (M) at (5,5.5) {$B \ox X \ox B$};
\node (N) at (7.5,5.5) {$X \ox B^2 $};
\node (O) at (0.5,3) {$(B \ox X)^2 $};
\node (P) at (3.5,3) {$(X \ox B)^2$};
\node (Q) at (7,3) {$X^2 \ox B^2 $};
\node (R) at (0.5,0.5) {$(B \ox X)^2 $}; 
\node (S) at (2.5,0.5) {$X \ox B^2 \ox X$}; 
\node (T) at (5,0.5) {$X \ox B \ox X  $};
\node (U) at (7.5,0.5) {$X^2 \ox B$};
\node (X1) at (4,2) {$(X \ox B)^2$};
\node (X2) at (6,2) {$X^2 \ox B^2$};
\node (Y1) at (4,4) {$(X \ox B)^2$};
\node (Y2) at (6,4) {$X^2 \ox B^2$};
\node at (5,-0.25) {$\eqref{unital multiplication}$};
\node at (5,6.25) {$\eqref{unital multiplication}$};
\node at (5,4.75) {$\eqref{theta multiplication x}$};
\node at (5,1.25) {$\eqref{theta multiplication B}$};
\draw[commutative diagrams/.cd, ,font=\scriptsize]
(J.south) edge[commutative diagrams/equal]  (K.north)
(K.south) edge[commutative diagrams/equal]  (O.north)
(O.south) edge[commutative diagrams/equal]  (R.north)
(R.south) edge[commutative diagrams/equal]  (A.north);
\path[->,font=\scriptsize]
(A.east) edge node[above] {$u_X \ox (1_B \ox 1_X)^2 \ox u_B $} (A'.west)
(X1.east) edge node[above] {$1_X \ox \Theta \ox 1_B $} (X2.west)
(Y1.east) edge node[above] {$1_X \ox \Theta \ox 1_B $} (Y2.west)
(A'.east) edge node[above] {$1_X  \ox \Theta  \ox 1_B \ox 1_X \ox 1_B$} (B.west)
(Y2.north) edge node[descr] {$m  \ox (1_B)^2 $} (N.south west)
(S.north) edge node[descr] {$1_X  \ox 1_B \ox \Theta$} (X1.south west)
(X2.south east) edge node[descr] {$(1_X)^2  \ox m$} (U.north)
(L.south east) edge node[descr] {$\Theta \ox 1_X \ox 1_B$} (Y1.north)
(B.east) edge node[above] {$m^2 \ox 1_X \ox 1_B$} (C.west)
(R.east) edge node[above] {$\Theta \ox 1_B \ox 1_X$} (S.west)
(S.east) edge node[above] {$1_X \ox m \ox 1_X$} (T.west)
(T.east) edge node[above] {$1_X \ox \Theta$} (U.west)
(O.east) edge node[above] {$\Theta \ox \Theta $} (P.west)
(P.east) edge node[above] {$ 1_X \ox \Theta \ox 1_B $} (Q.west)
(Q.east) edge node[above] {$m \ox m$} (E.west)
(K.east) edge node[above] {$1_B \ox 1_X \ox \Theta$} (L.west)
(L.east) edge node[above] {$1_B \ox m \ox 1_B$} (M.west)
(M.east) edge node[above] {$\Theta \ox 1_B$} (N.west)
(J.east) edge node[above] {$u_X \ox (1_B \ox 1_X )^2 \ox u_B $} (I.west)
(I.east) edge node[above] {$1_X \ox 1_B \ox 1_X  \ox \Theta  \ox 1_B $} (H.west)
(H.east) edge node[above] {$1_X \ox 1_B \ox m^2$} (G.west)
(F.south) edge node[descr] {$m \ox m$} (E.north)
(G.south) edge node[descr] {$1_X \ox \Theta \ox 1_B$} (F.north)
(D.north) edge node[descr] {$m \ox m$} (E.south)
(C.north) edge node[descr] {$1_X \ox \Theta \ox 1_B$} (D.south)
(U.north east) edge node[descr] {$m \ox 1_B $} (E.south west)
(N.south east) edge node[descr] {$1_X \ox m$} (E.north west)
;
\end{tikzpicture} 
 \end{center}
{\orange Finally, the condition{\blue s} (10') and (11') hold thanks to \eqref{S_L action}, \eqref{S_R action} and \eqref{act split}, and we can conclude that \eqref{split ext Hopf algebra} is a split extension of Hopf algebras as defined in Definition \ref{definition split extenion Hopf algebras}.}

On the other hand, if we have a split extension of Hopf algebras, we can define an action of Hopf algebras. Thanks to Proposition \ref{prop lamda morph} and condition $(9')$, we can prove two identities which are crucial properties, for our purpose{\blue s}, of a split extension of Hopf algebras.

\begin{lemma}\label{lemma act hopf}
Let 
\begin{tikzpicture}[descr/.style={fill=white},baseline=(A.base)] 
\node (A) at (0,0) {$A$};
\node (B) at (2.5,0) {$B$};
\node (C) at (-2.5,0) {$X$};
\path[dashed,->,font=\scriptsize]
([yshift=2pt]A.west) edge node[above] {$\lambda$} ([yshift=2pt]C.east);
\path[->,font=\scriptsize]
([yshift=-4pt]C.east) edge node[below] {$\kappa$} ([yshift=-4pt]A.west)
([yshift=-4pt]A.east) edge node[below] {$\alpha$} ([yshift=-4pt]B.west)
([yshift=2pt]B.west) edge node[above] {$e$} ([yshift=2pt]A.east);
\end{tikzpicture} 
 be a split extension of Hopf algebras, we have 
\begin{equation}\label{mB action} 
\lambda \cdot m \cdot (e \ox \kappa) \cdot (m \ox 1_X) =\lambda \cdot m \cdot (e \ox (\kappa \cdot \lambda)) \cdot (1_B \ox m) \cdot (1_B \ox e \ox \kappa),
\end{equation} 
\begin{equation}\label{mX action}
\lambda \cdot m \cdot (e \ox \kappa) \cdot (1_B \ox m) = m \cdot (\lambda \ox \lambda) \cdot (m \ox m) \cdot (e \otimes \kappa \otimes e \ox \kappa) \cdot (1_B \ox {\blue \sigma_{B,X}} \ox 1_X) \cdot (\Delta \ox 1_X \ox 1_X),
\end{equation}   
\begin{center}
\begin{tikzpicture}[descr/.style={fill=white},yscale=0.9,xscale=1.2] 
\node (A) at (0,0) {$B \ox X$};
\node (B) at (3,0) {$X$};
\node (C) at (3,3) {$B\ox X$};
\node (D) at (0,3) {$B^2 \ox X$};
\node at (1.5,1.5) {$\eqref{mB action}$};
\path[->,font=\scriptsize]
(A.east) edge node[above] {$\lambda \cdot m \cdot (e \ox \kappa)$} (B.west)
(D.east) edge node[above] {$1_B \ox (\lambda \cdot m \cdot (e \ox \kappa))$} (C.west)
(C.south) edge node[descr] {$\lambda \cdot m \cdot (e \ox \kappa)$} (B.north)
(D.south) edge node[descr] {$m \ox 1_X$} (A.north)
;
\end{tikzpicture} 
\qquad
\begin{tikzpicture}[descr/.style={fill=white},yscale=0.9,xscale=1.2] 
\node (A) at (0,0) {$B \ox X$};
\node (B) at (6,0) {$X.$};
\node (C) at (6,1.5) {$X\ox X$};
\node (D) at (6,3) {$(B \ox X)^2$};
\node (E) at (3,3) {$B^2 \ox X^2$};
\node (F) at (0,3) {$B \ox X\ox X$};
\node at (3,1.5) {$\eqref{mX action}$};
\path[->,font=\scriptsize]
(A.east) edge node[above] {$\lambda \cdot m \cdot (e \ox \kappa)$} (B.west)
(F.east) edge node[above] {$\Delta \ox 1_X \ox 1_X$} (E.west)
(E.east) edge node[above] {$1_B \ox {\blue \sigma_{B,X}}  \ox 1_X$} (D.west)
(D.south) edge node[descr] {$(\lambda \cdot m \cdot (e \ox \kappa))^2$} (C.north)
(C.south) edge node[descr] {$m$} (B.north)
(F.south) edge node[descr] {$1_B \ox m$} (A.north)
;
\end{tikzpicture} 
 \end{center}
\end{lemma}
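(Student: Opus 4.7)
The plan is to unfold $\triangleright = \lambda \cdot m \cdot (e \otimes \kappa)$ on both sides of each identity and to reduce each of them to an application of Proposition \ref{prop lamda morph} combined with a single partial associativity condition of Definition \ref{definition split extenion Hopf algebras}. The strategy hinges on two auxiliary equalities extracted from Proposition \ref{prop lamda morph}: \emph{(i)} $\lambda \cdot m \cdot (e \otimes 1_A) = \lambda \cdot m \cdot (e \otimes (\kappa \cdot \lambda))$, obtained by precomposing the formula with $e \otimes 1_A$, using that $e$ is a coalgebra morphism and applying $(1')$ $\alpha \cdot e = 1_B$ and $(2')$ $\lambda \cdot e = u_X \cdot \epsilon_B$ so that one of the resulting tensor factors collapses to a scalar which counitality of $B$ reabsorbs; and \emph{(ii)} $\lambda \cdot m \cdot (\kappa \otimes 1_A) = m \cdot (1_X \otimes \lambda)$, obtained symmetrically by precomposing with $\kappa \otimes 1_A$ and using that $\kappa$ is a coalgebra morphism together with $\lambda \cdot \kappa = 1_X$, $\alpha \cdot \kappa = u_B \cdot \epsilon_X$, counitality of $X$ and unitality of $A$.

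For \eqref{mB action}, since $e$ is an algebra morphism the left-hand side equals $\lambda \cdot m \cdot (m \otimes 1_A) \cdot (e \otimes e \otimes \kappa)$; condition $(9')$ turns this into $\lambda \cdot m \cdot (e \otimes (m \cdot (e \otimes \kappa)))$, and the auxiliary identity \emph{(i)} then inserts the missing $\kappa \cdot \lambda$ on the second tensor factor to produce the right-hand side.

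For \eqref{mX action}, using that $\kappa$ is an algebra morphism the left-hand side rewrites as $\lambda \cdot m \cdot (e \otimes m) \cdot (1_B \otimes \kappa \otimes \kappa)$, which condition $(9')$ converts further into $\lambda \cdot m \cdot (m \cdot (e \otimes \kappa) \otimes \kappa)$. Lemma \ref{tech lemma} now replaces the inner $m \cdot (e \otimes \kappa)$ by $m \cdot (\kappa \otimes e) \cdot (\triangleright \otimes 1_B) \cdot (1_B \otimes \sigma_{B,X}) \cdot (\Delta \otimes 1_X)$, trading the roles of $e$ and $\kappa$ for a comultiplication on $B$; then condition $(6')$ regroups the outer product so that the whole expression takes the form $\lambda \cdot m \cdot (\kappa \otimes (m \cdot (e \otimes \kappa)))$ precomposed with the appropriate permutation and cocurrying of $B$. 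The auxiliary identity \emph{(ii)} turns $\lambda \cdot m \cdot (\kappa \otimes 1_A)$ into $m \cdot (1_X \otimes \lambda)$, which collapses the rightmost factor into a second copy of $\triangleright$ and delivers exactly the right-hand side of \eqref{mX action}.

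The main obstacle is to spot and prove the auxiliary identities \emph{(i)} and \emph{(ii)}: once they are in hand, each of \eqref{mB action} and \eqref{mX action} reduces to a single application of a partial associativity condition, together with Lemma \ref{tech lemma} in the case of \eqref{mX action}. Identity \emph{(i)} expresses that, under $\lambda$, any product of the shape $m(e(b), a)$ only depends on $a$ via $\lambda(a)$, and identity \emph{(ii)} says precisely that $\lambda$ is a morphism of left $\kappa(X)$-modules; this latter reformulation is what allows the cocurried right-hand side of \eqref{mX action} to be re-assembled as a genuine product in $A$.
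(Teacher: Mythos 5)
Your proposal is correct. Both auxiliary identities (i) and (ii) do follow from Proposition \ref{prop lamda morph} by precomposition with $e\otimes 1_A$ (resp. $\kappa\otimes 1_A$), using that $e$ and $\kappa$ are coalgebra morphisms together with $\alpha\cdot e=1_B$, $\lambda\cdot e=u_X\cdot\epsilon_B$ (resp. $\alpha\cdot\kappa=u_B\cdot\epsilon_X$, $\lambda\cdot\kappa=1_X$) and (co)unitality, and your reductions of \eqref{mB action} and \eqref{mX action} to $(9')$ via these identities go through. For \eqref{mB action} this is essentially the paper's argument: the commutative diagram in the paper is exactly an unpackaged version of ``$e$ is an algebra morphism, then $(9')$, then Proposition \ref{prop lamda morph} collapsed by $\alpha\cdot e=1_B$ and counitality''. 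For \eqref{mX action} you deviate slightly: the paper applies Proposition \ref{prop lamda morph} directly to the outer product $\lambda\bigl((e(b)\kappa(x))\kappa(x')\bigr)$, using that $\alpha(e(b)\kappa(x))=b\,\epsilon_X(x)$ and $\lambda(\kappa(x'))=x'$ to produce the two copies of the action at once, whereas you first commute $e$ past $\kappa$ with Lemma \ref{tech lemma} (equation \eqref{lemma1.7}) and then invoke $(6')$ and your module-morphism identity (ii). Both routes use the same underlying ingredients (Proposition \ref{prop lamda morph} plus partial associativity); the paper's is marginally shorter since it avoids Lemma \ref{tech lemma} and $(6')$, while yours isolates the two reusable identities (i) and (ii), which makes the mechanism of the proof more transparent.
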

\begin{proof}
Thanks to Proposition \ref{prop lamda morph}, the result follows as we can check {\blue by means of} the Figures \ref{proof mb action} and \ref{proof mx action}, where we use that $e, \alpha, \kappa$ are morphisms of bialgebras.
\setcounter{figure}{8}
\begin{figure}[b]
\centering
\begin{tikzpicture}[descr/.style={fill=white},baseline=(A.base),yscale=1.1,xscale=0.9] 
\node (A) at (0,-1.5) {$ B^2 \ox X$};
\node (A') at (3,-1.5) {$  B \ox X$};
\node (B) at (6,-1.5) {$ A^2 $};
\node (C) at (16.5,-1.5) {$A$};
\node (D) at (16.5,1.5) {$A$};
\node (E) at (16.5,3) {$X $};
\node (F) at (16.5,4.5) {$X$};
\node (G) at (16.5,6) {$A$}; 
\node (H) at (16.5,7.5) {$A^2  $};
\node (I) at (10,7.5) {$B \ox A$};
\node (J) at (5,7.5) {$B \ox A^2$};
\node (K) at (0,7.5) {$B^2 \ox X $}; 
\node (M) at (5,6) {$B \ox A^2$};
\node (N) at (10,6) {$B \ox A$};
\node (O') at (12.5,7.5) {$B \ox X$};
\node (O) at (12.5,6) {$A^2$};
\node (P1) at (0,4.5) {$B^2 \ox X$};
\node (P2) at (2.5,4.5) {$B^2 \ox A^2$};
\node (P3) at (5,4.5) {$B^2 \ox A$};
\node (P4) at (8,4.5) {$X \ox A^2$};
\node (P5) at (12,4.5) {$X \ox A^2$};
\node (P6) at (14,4.5) {$X \ox A$};
\node (P7) at (15.5,4.5) {$X^2$};
\node (Q1) at (5,3) {$A^3$};
\node (Q2) at (10,3) {$A^3$};
\node (Q3) at (12.5,3) {$A^2$};
\node (Q4) at (15,3) {$X^2$};
\node (R) at (0,1.5) {$B^2 \ox X $}; 
\node (S) at (2.5,1.5) {$A^3$}; 
\node (T) at (5,1.5) {$A^2  $};
\node (U) at (3,0) {$A^3$};
\node (V) at (6,0) {$A^2  $};
\node at (4.5,0.75) {$(9')$};
\node at (10,2.25) {$ \eqref{lambda morph of alg}$};
\node at (10,3.75) {$ \eqref{unital multiplication}$};
\node at (10,5.25) {$\eqref{unital multiplication} + \eqref{coass comultiplication}$};
\draw[commutative diagrams/.cd, ,font=\scriptsize]
(K.south) edge[commutative diagrams/equal]  (P1.north)
(P1.south) edge[commutative diagrams/equal]  (R.north)
(R.south) edge[commutative diagrams/equal]  (A.north)
(V.south) edge[commutative diagrams/equal]  (B.north)
(F.south) edge[commutative diagrams/equal]  (E.north)
(D.south) edge[commutative diagrams/equal]  (C.north);
\path[->,font=\scriptsize]
(A.east) edge node[above] {$m \ox 1_X $} (A'.west)
(A'.east) edge node[above] {$e  \ox \kappa$} (B.west)
(B.east) edge node[above] {$m $} (C.west)
(R.east) edge node[above] {$e \ox e \ox \kappa$} (S.west)
(S.east) edge node[above] {$1_A \ox m $} (T.west)
(T.east) edge node[above] {$m$} (D.west)
(U.east) edge node[above] {$m \ox 1_A $} (V.west)
(P1.east) edge node[above] {$ \Delta \ox e \ox \kappa $} (P2.west)
(P2.east) edge node[above] {$(1_B)^2 \ox m $} (P3.west)
(P3.east) edge node[above] {$ (u_X \cdot {\blue \epsilon_B}) \ox e \ox 1_A $} (P4.west)
(P4.east) edge node[above] {$ 1_A \ox (e \cdot \alpha) \ox (\kappa \cdot \lambda) $} (P5.west)
(P5.east) edge node[above] {$ 1_X \ox m$} (P6.west)
(P6.east) edge node[above] {$ 1_X  \ox \lambda$} (P7.west)
(P7.east) edge node[above] {$ m$} (F.west)
(Q1.east) edge node[above] {$ 1_A \ox (e \cdot \alpha) \ox (\kappa \cdot \lambda) $} (Q2.west)
(Q2.east) edge node[above] {$ 1_A \ox m$} (Q3.west)
(Q3.east) edge node[above] {$ \lambda  \ox \lambda$} (Q4.west)
(Q4.east) edge node[above] {$ m$} (E.west)
(M.east) edge node[above] {$1_B \ox m$} (N.west)
(N.east) edge node[above] {$e \ox 1_A$} (O.west)
(K.east) edge node[above] {$1_B \ox e \ox \kappa$} (J.west)
(J.east) edge node[above] {$1_B \ox m$} (I.west)
(I.east) edge node[above] {$1_B \ox \lambda $} (O'.west)
(O'.east) edge node[above] {$e \ox \kappa $} (H.west)
(H.south) edge node[descr] {$m $} (G.north)
(G.south) edge node[descr] {$\lambda$} (F.north)
(D.north) edge node[descr] {$\lambda$} (E.south)
(T.north) edge node[descr] {$\Delta \ox 1_A $} (Q1.south)
(P3.south) edge node[descr] {$e \ox e \ox 1_A$} (Q1.north)
(R.south east) edge node[descr] {$e \ox e \ox \kappa$} (U.north west)
(K.south east) edge node[descr] {$1_B \ox e \ox \kappa$} (M.north west)
(O.north east) edge node[descr] {$(e \cdot \alpha) \ox (\kappa \cdot \lambda) $} (H.south west)
;
\end{tikzpicture} 
\caption{Proof of \eqref{mB action}}
\label{proof mb action}
 \end{figure}
\begin{figure}
\centering
\begin{tikzpicture}[descr/.style={fill=white},baseline=(A.base),xscale=1.15,yscale=1.4] 
\node (A1) at (2.5,-1) {$ B^2 \ox X^2$};
\node (A2) at (5,-1) {$  (B \ox X)^2$};
\node (A3) at (10,-1) {$  A^4$};
\node (A4) at (15,-1) {$  A^2$};
\node (B1) at (2.5,1.5) {$B \ox X^2$};
\node (B2) at (5,1.5) {$B^2 \ox X^3$};
\node (B3) at (5,0) {$(B \ox X)^2 \ox X$};
\node (B4) at (7.5,0) {$A^5$};
\node (B5) at (10.5,0) {$A^5$};
\node (B6) at (12.5,0) {$A^3$};
\node (B7) at (15,0) {$A^2$};
\node (B8) at (15,1) {$X^2$};
\node (B9) at (15,2) {$X$};
\node (E1) at (2.5,3) {$B \ox X^2$};
\node (E2) at (7.5,3) {$A^3$};
\node (E3) at (12.5,3) {$A^2$};
\node (E4) at (15,3) {$A$};
\node (F2) at (7.5,4) {$A^3$};
\node (F3) at (12.5,4) {$A^2$};
\node (F4) at (15,4) {$A$};
\node (G1) at (2.5,5) {$B \ox X^2$};
\node (G2) at (10,5) {$B \ox X$};
\node (G3) at (15,5) {$A^2$};
\node (C) at (7.5,1.5) {$A^5 $}; 
\node (D) at (10,1.5) {$A^3$}; 
\node (E) at (12.5,1.5) {$A^3$};
\node at (4.5,-0.5) {$(1)+(2)$};
\node at (4,4) {$(9')$};
\node at (13.75,1.5) {$\eqref{lambda morph of alg}$};
\draw[commutative diagrams/.cd, ,font=\scriptsize]
(G1.south) edge[commutative diagrams/equal]  (E1.north)
(E1.south) edge[commutative diagrams/equal]  (B1.north)
(E4.north) edge[commutative diagrams/equal]  (F4.south)
(B7.south) edge[commutative diagrams/equal]  (A4.north)
;
\path[->,font=\scriptsize]
(A1.east) edge node[above] {$1_B \ox {\blue \sigma_{B,X}} \ox 1_X $} (A2.west)
(A2.east) edge node[above] {$(e \ox \kappa)^2 $} (A3.west)
(A3.east) edge node[above] {$m \ox m$} (A4.west)
(B1.east) edge node[above] {$\Delta \ox \Delta \ox 1_X $} (B2.west)
(B2.south) edge node[descr,xshift=-10pt] {$1_B \ox {\blue \sigma_{B,X}} \ox 1_{X^2} $} (B3.north)
(B3.east) edge node[above] {$(e \ox \kappa)^2  \ox \kappa$} (B4.west)
(B4.east) edge node[above] {$1_{A^2}  \ox (e \cdot \alpha)^2 \ox (\kappa \cdot \lambda) $} (B5.west)
(B5.east) edge node[above] {$m \ox m \ox 1_A $} (B6.west)
(B6.east) edge node[above] {$1_A \ox m  $} (B7.west)
(B7.north) edge node[descr] {$ \lambda^2 $} (B8.south)
(B8.north) edge node[descr] {$m $} (B9.south)
(G1.east) edge node[above] {$1_B  \ox m$} (G2.west)
(G2.east) edge node[above] {$e \ox \kappa $} (G3.west)
(F2.east) edge node[above] {$1_A  \ox m$} (F3.west)
(F3.east) edge node[above] {$m $} (F4.west)
(E1.east) edge node[above] {$e \ox \kappa^2   $} (E2.west)
(E2.east) edge node[above] {$m \ox 1_A  $} (E3.west)
(E3.east) edge node[above] {$m $} (E4.west)
(E4.south) edge node[descr] {$\lambda$} (B9.north)
(E3.south) edge node[descr] {$\Delta \ox 1_A$} (E.north)
(E.south) edge node[descr] {$\; \; \; \; \; \;\; \; \; \; \; \; \; \; \; \; \; \; \;\; \; \; \; \; \;\;\; \; \; \; \; \;1_A \ox (e \cdot \alpha) \ox (\kappa \cdot \lambda)$} (B6.north)
(E2.south) edge node[descr] {$\Delta^2 \ox 1_A$} (C.north)
(C.south) edge node[descr] {$1_A \ox {\blue \sigma_{A,A}} \ox 1_{A^2} \; \; \; \; \; \; \; \; \; \; \; \; \; \; \; \; \; \;$} (B4.north)
(B1.south) edge node[descr] {$\Delta \ox 1_{X^2}$} (A1.north)
(G3.south) edge node[descr] {$m$} (F4.north)
(B4.north east) edge node[descr] {$m \ox m \ox 1_A $} (D.south west)
(G1.south east) edge node[descr] {$e \ox \kappa  \ox \kappa$} (F2.north west)
(D.south east) edge node[descr] {$1_A \ox (e \cdot \alpha)  \ox (\kappa \cdot \lambda)$} (B6.north west)
;
\end{tikzpicture}
 \caption{Proof of \eqref{mX action}}
 \label{proof mx action}
\end{figure}
\qed
\end{proof}

This lemma implies that the action of bialgebras defined by \eqref{Action bialgebra} ($\act = \lambda \cdot m \cdot (e \ox \kappa)$) satisfies the conditions
\eqref{ass action} and \eqref{coass action}. Hence, this action becomes an action of Hopf algebras {\orange since the conditions \eqref{S_L action} and \eqref{S_R action} are given by the conditions (10') and (11').} 

\begin{remark}\label{action hopf reexpressed}
The construction of the action of Hopf algebras given by $\act = \lambda \cdot m \cdot (e \ox \kappa)$ can be reformulated without $\lambda$, {\red when we compose it by $\kappa$}. Indeed, by pre-composing by $(1_B \ox 1_X  \ox e) \cdot (1_B \ox 1_X \ox S_R)\cdot (1_B \ox {\blue \sigma_{B,X}}) \cdot (\Delta \ox 1_X)$ and post-composing by $m$, the two components of the equality \eqref{lemma1.7}, we obtain the following equality 
\begin{equation}\label{re-expres act}
\kappa \cdot \act = m \cdot (m \ox 1_A) \cdot (e \ox \kappa \ox e) \cdot (1_B \ox 1_X \ox S_R) \cdot (1_B \ox {\blue \sigma_{B,X}}) \cdot (\Delta \ox 1_X).
\end{equation}
We notice that thanks to the condition $(8')$, this is equivalent to 
\begin{equation*}
\kappa \cdot \act = m \cdot (1_A \ox m) \cdot (e \ox \kappa \ox e) \cdot (1_B \ox 1_X \ox S_R) \cdot (1_B \ox {\blue \sigma_{B,X}}) \cdot (\Delta \ox 1_X).
\end{equation*}
{\red When the symmetric monoidal category is ${\blue {\sf Vect}_K}$, $\kappa$ can be viewed as an inclusion and \eqref{re-expres act} gives us a way to construct the action without $\lambda$.}

\end{remark}

{\blue For the sake of clarity, we give an explicit description of the morphisms of split extensions and actions of Hopf algebras in $\C$. 

\begin{definition}\label{morph split ext HA}
A morphism of split extensions of Hopf algebras from
\begin{tikzpicture}[descr/.style={fill=white},baseline=(A.base),
xscale=0.8] 
\node (A) at (0,0) {$A$};
\node (B) at (2.5,0) {$B$};
\node (C) at (-2.5,0) {$X$};
\path[dashed,->,font=\scriptsize]
([yshift=2pt]A.west) edge node[above] {$\lambda$} ([yshift=2pt]C.east);
\path[->,font=\scriptsize]
([yshift=-4pt]C.east) edge node[below] {$\kappa$} ([yshift=-4pt]A.west)
([yshift=-4pt]A.east) edge node[below] {$\alpha$} ([yshift=-4pt]B.west)
([yshift=2pt]B.west) edge node[above] {$e$} ([yshift=2pt]A.east);
\end{tikzpicture} 
  to 
 \\
\begin{tikzpicture}[descr/.style={fill=white},baseline=(A.base),
xscale=0.8] 
\node (A) at (0,0) {$A'$};
\node (B) at (2.5,0) {$B'$};
\node (C) at (-2.5,0) {$X'$};
\path[dashed,->,font=\scriptsize]
([yshift=2pt]A.west) edge node[above] {$\lambda'$} ([yshift=2pt]C.east);
\path[->,font=\scriptsize]
([yshift=-4pt]C.east) edge node[below] {$\kappa'$} ([yshift=-4pt]A.west)
([yshift=-4pt]A.east) edge node[below] {$\alpha'$} ([yshift=-4pt]B.west)
([yshift=2pt]B.west) edge node[above] {$e'$} ([yshift=2pt]A.east);
\end{tikzpicture} 
is given by three morphisms of Hopf algebras $g\colon B \rightarrow B'$, $v \colon X \rightarrow X'$ and $p \colon A \rightarrow A'$ such that the following diagram commutes in $\C$
\begin{equation}
\begin{tikzpicture}[descr/.style={fill=white},baseline=(current  bounding  box.center),xscale=0.7] 
\node (A) at (0,0) {$A$};
\node (B) at (2.5,0) {$B$};
\node (C) at (-2.5,0) {$X$};
\node (A') at (0,-2) {$A'$};
\node (B') at (2.5,-2) {$B'$};
\node (C') at (-2.5,-2) {$X'$};
\path[dashed,->,font=\scriptsize]
([yshift=2pt]A.west) edge node[above] {$\lambda$} ([yshift=2pt]C.east)
([yshift=2pt]A'.west) edge node[above] {$\lambda'$} ([yshift=2pt]C'.east);
\path[->,font=\scriptsize]
(B.south) edge node[right] {$ g$}  (B'.north)
 (C.south) edge node[left] {$ v $}  (C'.north)
(A.south) edge node[left] {$ p$} (A'.north)
([yshift=-4pt]C'.east) edge node[below] {$\kappa'$} ([yshift=-4pt]A'.west)
([yshift=-4pt]A'.east) edge node[below] {$\alpha'$} ([yshift=-4pt]B'.west)
([yshift=2pt]B'.west) edge node[above] {$e'$} ([yshift=2pt]A'.east)
([yshift=-4pt]C.east) edge node[below] {$\kappa$} ([yshift=-4pt]A.west)
([yshift=-4pt]A.east) edge node[below] {$\alpha$} ([yshift=-4pt]B.west)
([yshift=2pt]B.west) edge node[above] {$e$} ([yshift=2pt]A.east);
\end{tikzpicture}
\end{equation}
\end{definition}

\begin{definition}\label{morph action HA}
Let $\act \colon B \otimes X \rightarrow X$ and $\act' \colon B' \otimes X' \rightarrow X'$ be two actions of Hopf algebras. A morphism between them is defined as a pair of morphisms of Hopf algebras $g \colon B \rightarrow B'$ and $v \colon X \rightarrow X'$ such that  \[v \cdot \act = \act' \cdot (g \ox v).\]
\end{definition}
 
The actions of Hopf algebras (Definition \ref{def Hopf action}) endowed with the morphisms of actions of Hopf algebras (Definition \ref{morph action HA}) form the category ${\blue \mathsf{Act(Hopf_\C)}}$ of actions of Hopf algebras in $\C$. The split extensions of Hopf algebras with the morphisms given by Definition \ref{morph split ext HA} form the category ${\blue \mathsf{SplitExt(Hopf_\C)}}$ of split extensions of Hopf algebras in $\C$.} 

\begin{theorem}\label{thm Hopg algebra}
{\blue Let $\C$ be a symmetric monoidal category}. There is an equivalence between ${\blue \mathsf{SplitExt(Hopf_\C)}}$ the category of split extensions of Hopf algebras  {\ro in $\C$} and ${\blue \mathsf{Act(Hopf_\C)}}$ the category of actions of Hopf algebras in $\C$.
\end{theorem}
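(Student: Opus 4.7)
The plan is to reduce Theorem \ref{thm Hopg algebra} to the bialgebra equivalence of Theorem \ref{equi bialg} by showing that the mutually quasi-inverse functors $F, G$ constructed there restrict to the subcategories $\mathsf{SplitExt(Hopf_\C)}$ and $\mathsf{Act(Hopf_\C)}$, and that the natural isomorphisms $\phi, \psi$ of Proposition \ref{canonical} lift to isomorphisms of Hopf split extensions.

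That $F$ restricts is essentially immediate: given a split extension of Hopf algebras, the resulting action $\act = \lambda \cdot m \cdot (e \ox \kappa)$ is an action of bialgebras by Proposition \ref{Action bialgebra}, the two multiplicativity axioms \eqref{ass action} and \eqref{coass action} of Definition \ref{def Hopf action} are precisely the identities \eqref{mB action} and \eqref{mX action} proved in Lemma \ref{lemma act hopf}, and the antipode axioms \eqref{S_L action}, \eqref{S_R action} are literally conditions $(10')$ and $(11')$ of Definition \ref{definition split extenion Hopf algebras}. That $G$ restricts is what the lengthy computation preceding the statement achieves: for an action of Hopf algebras, the semi-direct product $X \rtimes B$ with left and right antipodes
\[ S_{X \rtimes B, L} = \Theta \cdot (S_L \ox S_L) \cdot \sigma_{X,B}, \qquad S_{X \rtimes B, R} = \Theta \cdot (S_R \ox S_R) \cdot \sigma_{X,B} \]
is a Hopf algebra, the inclusions and projection $i_1, i_2, \pi_2$ are Hopf morphisms, the additional partial associativity $(9')$ holds, and $(10'), (11')$ for the resulting split extension follow from \eqref{S_L action}, \eqref{S_R action} via \eqref{act split}.

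Morphisms transport in both directions. A morphism $(g,v)$ of Hopf actions yields, by Lemma \ref{lemma morphism of split ext}, the bialgebra morphism $p = v \ox g$ between semi-direct products; it preserves the antipodes because $v$ and $g$ do and the formulas for $S_{X \rtimes B, L/R}$ are built functorially from $\Theta$ and the braiding. Conversely, a morphism of Hopf split extensions yields a morphism of Hopf actions by Lemma \ref{lemma morphism of actions}, $g$ and $v$ being Hopf by hypothesis. To finish the equivalence I would verify that the bialgebra isomorphisms $\phi = m \cdot (\kappa \ox e)$ and $\psi = (\lambda \ox \alpha) \cdot \Delta$ of Proposition \ref{canonical} preserve both antipodes; this amounts to a direct diagrammatic computation using the explicit formulas for $S_{X \rtimes B, L/R}$, the identity \eqref{lemma1.7} of Lemma \ref{tech lemma}, the Hopf compatibility of $\kappa$ and $e$, and the antipode axioms \eqref{antipode} in $A$.

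I expect this last verification to be the main obstacle: unlike in the bialgebra case, where the existence of an inverse bialgebra morphism is automatic from invertibility in $\C$, here one must confront head-on the compatibility between the two antipode structures on $X \rtimes B$ (the one transported from $A$ through $\psi$, and the one defined intrinsically from the action). Fortunately all necessary ingredients—the reformulation of the Hopf-action axioms via $\Theta$ through \eqref{theta multiplication x}--\eqref{theta unit X}, the identity \eqref{lemma1.7}, and the antipode-preservation of $\kappa$ and $e$—are already assembled, and the verification, while not a formal consequence of Theorem \ref{equi bialg}, should go through diagrammatically without any further new input.
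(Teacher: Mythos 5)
Your proposal is correct and follows essentially the same route as the paper: reduce to the bialgebra equivalence of Theorem \ref{equi bialg}, use the computations preceding the theorem and Lemma \ref{lemma act hopf} to see that the two constructions restrict to Hopf objects (with conditions $(10')$, $(11')$ matching \eqref{S_L action}, \eqref{S_R action}), transport morphisms via $v \ox g$, and check that $\phi = m\cdot(\kappa\ox e)$ and $\psi=(\lambda\ox\alpha)\cdot\Delta$ preserve the antipodes. The final verification you flag as the main obstacle is exactly the one diagram the paper supplies, proved using \eqref{lemma1.7} together with the fact that $S_L$ (resp.\ $S_R$) is an antihomomorphism of algebras, so no further input is needed.
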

{\green
\begin{proof}
Let $(g,v,p)$ be a morphism in ${\blue \mathsf{SplitExt(Hopf_\C)}}$, then it is clear that $(v,g)$ is a morphism in ${\blue\mathsf{Act(Hopf_\C)}}$. On the other hand, if $(v,g)$ is a morphism of actions of Hopf algebras, the triple $(g,v,v\ox g)$ is a morphism of split extensions of Hopf algebras since $v \ox g$ preserves the antipode. Moreover, the isomorphisms $\phi \coloneqq m \cdot (\kappa \ox e)$ and $\psi \coloneqq (\lambda \ox \alpha) \cdot \Delta$ (in \eqref{canonical}) form an isomorphism in ${\blue \mathsf{SplitExt(Hopf_\C)}}$ since they are morphisms of Hopf algebras, as we can see in the following diagram \begin{center}
\begin{tikzpicture}[descr/.style={fill=white},baseline=(A.base),xscale=1.4,yscale=1.3] 
\node (A0) at (0,0) {$ A^2 $};
\node (A1) at (2,0) {$ A$};
\node (A2) at (4,0) {$ A$};
\node (A5) at (10,0) {$ A$};
\node (B2) at (4,1) {$ A^2$};
\node (B5) at (10,1) {$ A^2$};
\node (C0) at (0,2) {$  X \ox B$};
\node (C1) at (2,2) {$ B \ox X$};
\node (C2) at (4,2) {$  B \ox X$};
\node (C3) at (6,2) {$  B^2 \ox X$};
\node (C4) at (8,2) {$  B \ox X \ox B$};
\node (C5) at (10,2) {$X \ox B $};
\node at (7,1) {$\eqref{lemma1.7}$};
\draw[commutative diagrams/.cd, ,font=\scriptsize]
(A2.east) edge[commutative diagrams/equal]  (A5.west);
\path[->,font=\scriptsize]
(A1.east) edge node[above] {$S_L $} (A2.west)
(A0.east) edge node[above] {$m  $} (A1.west)
(C0.east) edge node[above] {${\blue \sigma_{X,B}}$} (C1.west)
(C1.east) edge node[above] {$S_L \ox S_L$} (C2.west)
(C2.east) edge node[above] {$\Delta \ox 1_X$} (C3.west)
(C3.east) edge node[above] {$1_B \ox {\blue \sigma_{B,X}}$} (C4.west)
(C4.east) edge node[above] {$\act \ox 1_B$} (C5.west)
(C0.south) edge node[descr] {$\kappa \ox e $} (A0.north)
(C5.south) edge node[descr] {$\kappa \ox e $} (B5.north)
(C2.south) edge node[descr] {$e \ox \kappa $} (B2.north)
(B5.south) edge node[descr] {$m $} (A5.north)
(B2.south) edge node[descr] {$m $} (A2.north)
;
\end{tikzpicture} 

 \end{center}
 where the left square commutes since $S_L$ is an antihomomorphism of algebras {\rouge (a similar computation holds for $S_R$)}.
In conclusion, we obtain our statement thanks to the observations about the split extension \eqref{split ext Hopf algebra} and Lemma \ref{lemma act hopf}.
\qed
\end{proof}
}
{\rouge
\begin{remark}
Whenever we consider Hopf algebras such that $S_L = S_R$, the semi-direct product in \eqref{split ext Hopf algebra} also satisfies this property ($S_{X \rtimes B_{L}} = S_{X \rtimes B_{R}}$) and Theorem \ref{thm Hopg algebra} can be restricted to such Hopf algebras. Moreover, the condition $(11')$ in Definition \ref{definition split extenion Hopf algebras} is trivially satisfied thanks to (10') and \eqref{mB action}, and as we have already noticed the condition \eqref{S_R action} in Definition \ref{def Hopf action} always holds.
\end{remark}
}


In the case of associative Hopf algebras, we can define ${\blue \mathsf{SplitExt(AssHopf_\C)}}$ and ${\blue \mathsf{Act(AssHopf_\C)}}$. The actions of associative Hopf algebras are actions of Hopf algebras where the conditions \eqref{S_L action} and \eqref{S_R action}  always hold thanks to the uniqueness of the antipode. A split extension of associative Hopf algebras is the same as in ${\blue \mathsf{SplitExt(Hopf_\C)}}$ where the conditions $(6')$, $(7')$, $(8')$ and $(9')$ become trivial. Moreover, the conditions (10') and (11') are not required, they become properties that any split extension of associative Hopf algebras has.  
\begin{corollary}\label{cor Hopf algebra}
{\blue Let $\C$ be a symmetric monoidal category}. There is an equivalence between ${\blue \mathsf{SplitExt(AssHopf_\C)}}$ the category of split extensions of associative Hopf algebras {\blue in $\C$} and ${\blue \mathsf{Act(AssHopf_\C)}}$ the category of actions of associative Hopf algebras {\blue in $\C$}. 
\end{corollary}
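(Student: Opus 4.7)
The plan is to derive this corollary as a direct specialization of Theorem \ref{thm Hopg algebra} to the associative setting, checking that each additional axiom in Definitions \ref{def Hopf action} and \ref{definition split extenion Hopf algebras}, beyond those of an action or split extension of associative bialgebras, becomes either trivial or automatic once the three bialgebras involved are genuinely associative Hopf algebras. Concretely, I would describe ${\blue \mathsf{SplitExt(AssHopf_\C)}}$ and ${\blue \mathsf{Act(AssHopf_\C)}}$ as the full subcategories of ${\blue \mathsf{SplitExt(Hopf_\C)}}$ and ${\blue \mathsf{Act(Hopf_\C)}}$ whose underlying bialgebras are associative, and then check that the equivalence of Theorem \ref{thm Hopg algebra} restricts to these subcategories.

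On the split-extension side, I would note that the ``partial associativity'' conditions $(6')$, $(7')$, $(8')$ and $(9')$ of Definition \ref{definition split extenion Hopf algebras} are immediate consequences of the full associativity of $m$ on $A$, so they impose no additional constraint. The antipode conditions $(10')$ and $(11')$ then hold automatically because, in associative Hopf algebras, every bialgebra morphism preserves the antipode and the relevant maps admit unique convolution inverses in $\mathrm{Hom}_\C(A,A)$. On the action side, I would show that conditions \eqref{S_L action} and \eqref{S_R action} in Definition \ref{def Hopf action} are implied by the bialgebra-action axioms together with \eqref{ass action} and \eqref{coass action}, by a standard convolution argument in $\mathrm{Hom}_\C(B\ox X, X)$: one checks that both $S\cdot \act$ and $\act\cdot (1_B\ox S)$ are two-sided convolution inverses of $\act$, so they must coincide by uniqueness.

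With these observations in place, the functors $F$ and $G$ of Theorem \ref{thm Hopg algebra} restrict to the associative subcategories. For $F$, the action $\act = \lambda\cdot m\cdot (e\ox\kappa)$ attached to a split extension of associative Hopf algebras is clearly an action of associative Hopf algebras, with \eqref{ass action} and \eqref{coass action} provided directly by \eqref{mB action} and \eqref{mX action} of Lemma \ref{lemma act hopf}. For $G$, starting from an action of associative Hopf algebras, Lemma \ref{ass bialgebra} ensures that the additional axioms \eqref{ass action} and \eqref{coass action} are precisely what is needed for $m_{X \rtimes B}$ to be associative, so that $X\rtimes B$ is an associative Hopf algebra. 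Combining both directions yields the restricted equivalence.

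The main obstacle is the convolution-algebra argument that derives the antipode conditions \eqref{S_L action}, \eqref{S_R action}, $(10')$ and $(11')$ from the other axioms in the associative case: one has to be careful to set up the correct convolution algebra and to verify that the coalgebra-morphism property of $\act$ (and of the maps involved in $(10')$, $(11')$), together with the partial and full associativity at hand, really do furnish both a left and a right convolution inverse so that uniqueness can be invoked.
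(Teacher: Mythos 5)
Your proposal is correct and follows essentially the same route as the paper, which states the corollary as an immediate restriction of Theorem \ref{thm Hopg algebra} after observing that $(6')$--$(9')$ become trivial under associativity and that the antipode conditions $(10')$, $(11')$, \eqref{S_L action} and \eqref{S_R action} hold automatically by uniqueness of the antipode. Your convolution-inverse argument is simply the standard justification of that uniqueness claim, spelled out in more detail than the paper bothers to give.
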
 

Let us notice that, since {\green any morphism in ${\blue \mathsf{SplitExt(Hopf_\C)}}$, ${\blue \mathsf{SplitExt(AssHopf_\C)}}$ and $\mathsf{SplitExt(Ass{\blue BiAlg_\C})}$ is a morphism in $\mathsf{SplitExt({\blue BiAlg_\C})}$, the Split Short Five Lemma also holds in these categories. }

In some sense, this result, in the associative Hopf algebras, is similar to a property obtained for the {\em exact cleft sequences} of associative Hopf $K$-algebras (with bijective antipodes) investigated by \cite{AD} (see Lemma 3.2.19).
{\red We would like to emphasize the differences and shared properties between the definition of an exact cleft sequence of associative Hopf algebras and the definition of split extension of associative Hopf algebras (Definition \ref{definition split extenion Hopf algebras}), in the symmetric monoidal category ${\blue {\sf Vect}_K}$ of vector spaces. First, we recall the definition of an exact cleft sequence of associative Hopf algebras \cite{A}.
\begin{definition}\label{def exact}
{\blue A} sequence of morphisms of associative Hopf algebras
\begin{equation}\label{exact}
\begin{tikzpicture}[descr/.style={fill=white},baseline=(A.base)] 
\node (A) at (0,0) {$C'$};
\node (B) at (2.5,0) {$B'$};
\node (C) at (-2.5,0) {$A'$};
\path[->,font=\scriptsize]
(C.east) edge node[above] {$\iota$} (A.west)
(A.east) edge node[above] {$\pi$} (B.west);
\end{tikzpicture}
\end{equation}
is \emph{exact} if 
\begin{itemize}
\item[1)] $\iota$ is injective,
\item[2)] $\pi$ is surjective,
\item[3)] $ker(\pi)= C'\iota(A')^+$ ($ker(\pi)$ is the kernel in ${\blue {\sf Vect}_K}$ and $\iota(A')^+ = \{x \in \iota(A') \mid {\blue \epsilon_{C'}}(x) = 0 \}$) ,
\item[4)] $\iota(A') = LKer(\pi) = \{ x \in C' \mid (\pi \ox 1_{C'}) \cdot \Delta (x) = u_{B'} \ox x \}$.
\end{itemize}
\end{definition}
\begin{definition}
Let \eqref{exact} be an exact sequence of associative Hopf algebras, then the sequence {\blue $(\iota,\pi)$} \begin{equation}\label{cleft}
\begin{tikzpicture}[descr/.style={fill=white},baseline=(A.base)] 
\node (A) at (0,0) {$C'$};
\node (B) at (2.5,0) {$B'$};
\node (C) at (-2.5,0) {$A'$};
\path[->,font=\scriptsize]
([yshift=-4pt]C.east) edge node[below] {$\iota$} ([yshift=-4pt]A.west)
([yshift=-4pt]A.east) edge node[below] {$\pi$} ([yshift=-4pt]B.west);
\path[dotted,->,font=\scriptsize]
([yshift=2pt]B.west) edge node[above] {$\chi$} ([yshift=2pt]A.east)
([yshift=2pt]A.west)edge node[above] {$\xi$} ([yshift=2pt]C.east);
\end{tikzpicture} 
\end{equation}
is cleft if and only if there exist a morphism of $A'$-modules $\xi : C' \rightarrow A'$ (i.e.\ the equality $\xi \cdot m \cdot (\iota \ox 1_{C'})= m \cdot (1_{A'} \ox \xi)$ holds) and a morphism of $B'$-comodules $\chi \colon B' \rightarrow C'$ (i.e.\ the equality $(\pi \ox 1_{C'}) \cdot \Delta \cdot \chi = (1_{B'} \ox \chi) \cdot \Delta $ is satisfied) such that the following two equations hold
\begin{equation}\label{cleft 1}
\xi \cdot \chi = u_{A'} \cdot {\blue \epsilon_{B'}},
\end{equation} 
\begin{equation}\label{cleft 2}
m \cdot ((\iota \cdot \xi) \ox (\chi \cdot \pi) ) \cdot \Delta= 1_{C'}.
\end{equation}
\end{definition}
{\green Remark that in \cite{A,AD} the above definition is not the definition of an exact cleft sequence, but it is equivalent to it by Lemma 3.1.14 in \cite{A},}
It is straightforward to observe that the conditions \eqref{cleft 1} and \eqref{cleft 2} of the sequence \eqref{cleft} are the same as the conditions (2') and (3') in Definition \ref{definition split extenion Hopf algebras}. Moreover, let
\begin{tikzpicture}[descr/.style={fill=white},baseline=(A.base)] 
\node (A) at (0,0) {$A$};
\node (B) at (1.5,0) {$B$};
\node (C) at (-1.5,0) {$X$};
\path[dashed,->,font=\scriptsize]
([yshift=2pt]A.west) edge node[above] {$\lambda$} ([yshift=2pt]C.east);
\path[->,font=\scriptsize]
([yshift=-4pt]C.east) edge node[below] {$\kappa$} ([yshift=-4pt]A.west)
([yshift=-4pt]A.east) edge node[below] {$\alpha$} ([yshift=-4pt]B.west)
([yshift=2pt]B.west) edge node[above] {$e$} ([yshift=2pt]A.east);
\end{tikzpicture} 
 be a split extension of associative Hopf algebras, then $\lambda$ is a $X$-module morphism (thanks to Proposition \ref{prop lamda morph}) and $e$ is a $B$-comodule morphism.

However, there are major differences. Indeed, a split extension of associative Hopf algebras (Definition \ref{definition split extenion Hopf algebras}) is (in general) not exact in the sense of \cite{A}. Conversely, the Hopf algebra morphism $\pi$ in the exact cleft sequence \eqref{cleft} is not a split epimorphism of Hopf algebras, since $\chi$ is neither a morphism of algebras nor a morphism of coalgebras {\rouge (see \cite{AD} for such an example)}. Then, it is clear that one definition does not imply the other and vice versa. Nevertheless, there are sequences of associative Hopf algebras that are exact cleft sequences and split extensions of associative Hopf algebras. For example, any exact sequence of associative Hopf algebras \eqref{exact} such that $\pi$ is a split epimorphism is an example of both definitions (see example 2) in ${\blue {\sf Vect}_K}$).}

To end this paper, we investigate the two main symmetric monoidal categories of interest{\blue :} $\sf Set$ and ${\blue {\sf Vect}_K}$. On the one hand, we specify our results in $\sf Set$.
 \subsection*{Split extensions of Hopf algebras in the category of sets}
\begin{example}
 Any split extension of groups \eqref{group extension} is a split extension of associative Hopf algebras when the symmetric monoidal category is $\mathsf{Set}$. In particular, Corollary \ref{cor Hopf algebra} becomes the well-known equivalence of categories between split extensions of groups and group actions.
\end{example}

\begin{example} \label{exampleloop}
In $\mathsf{Set}$, non-associative Hopf algebras will be structures given by a set $G$, with a non-associative multiplication, a neutral element $1$, left inverses and right inverses such that
\begin{equation}\label{loop}
g^{-1}_Lg = 1 = gg^{-1}_R.
\end{equation} 
In particular, the non-zero octonions \cite{Cayley,Grave} are equipped with a non-associative multiplication satisfying \eqref{loop}. {\blue This structure is quite general. A special case is given by the structure of loops since any loop satisfies \eqref{loop}.}
We can describe what split extensions of this algebraic structure should be in order to be equivalent to actions of such an algebraic structure. Indeed, a split extension should be a split morphism of these algebraic structures 
\begin{tikzpicture}[descr/.style={fill=white},baseline=(A.base)] 
\node (A) at (0,0) {$A$};
\node (B) at (2.5,0) {$B$};
\node (C) at (-2.5,0) {$X$};
\path[->,font=\scriptsize]
(C.east) edge node[below] {$\kappa$} (A.west)
([yshift=-4pt]A.east) edge node[below] {$\alpha$} ([yshift=-4pt]B.west)
([yshift=2pt]B.west) edge node[above] {$e$} ([yshift=2pt]A.east);
\end{tikzpicture} 
 such that the following conditions are satisfied for any $a \in A, b \in B ,x \in X$
\begin{itemize}
\item[(3')] 
{\rouge $(a (e \cdot \alpha) (a^{-1}_R))(e \cdot \alpha) (a) = a$}
\item[(6')] 
$(\kappa(x)e(b))a = \kappa(x)(e(b)a)$
\item[(7')]
$(\kappa(x)a)e(b) = \kappa(x)(ae(b))$ 
\item[(8')]
$a(\kappa(x)e(b)) = (a\kappa(x))e(b)$
\item[(9')]
$(e(b)a)\kappa(x) = e(b)(a\kappa(x))$
\item[(10')]
{\rouge $ e(b^{-1}_R)^{-1}_L = e(b)$
\item[(11')]
$ \Big(e(b^{-1}_R)\Big(e(b^{-1}_R)^{-1}_R\big(x^{-1}_Re(b^{-1}_R)\big)\Big)\Big)e(b^{-1}_R)^{-1}_R = x^{-1}_R $.}
\end{itemize}
The other conditions are trivially satisfied with $\lambda(a) = a(e\cdot \alpha) (a^{-1}_R)$.
\end{example}
On the other hand, in the symmetric monoidal category ${\blue {\sf Vect}_K}$ of vector spaces over a field $K$, we give some particular cases of associative split extensions of Hopf algebras. 

\subsection*{Split extensions of Hopf algebras in the category of vector spaces}

1) We consider a split epimorphism $\alpha$ of associative Hopf $K$-algebras, 
\begin{equation}\label{HKer=LKer}
\begin{tikzpicture}[descr/.style={fill=white},baseline=(A.base)] 
\node (A) at (0.4,0) {$A$};
\node (B) at (2.5,0) {$B$};
\node (C) at (-2.5,0) {$HKer(\alpha)$};
\path[->,font=\scriptsize]
(C.east) edge node[above] {$\kappa_\alpha$} (A.west)
([yshift=-4pt]A.east) edge node[below] {$\alpha$} ([yshift=-4pt]B.west)
([yshift=2pt]B.west) edge node[above] {$e$} ([yshift=2pt]A.east);
\end{tikzpicture} ,
\end{equation} where $HKer(\alpha)$ is the kernel of $\alpha$ in the category $\mathsf{Hopf}_K$ of Hopf $K$-algebras, $\kappa_\alpha$ stands for the equalizer in ${\blue {\sf Vect}_K}$ of $(1_A \ox u_B \ox 1_A) \cdot \Delta $ and $(1_A \ox \alpha \ox 1_A) \cdot (\Delta \ox 1_A) \cdot \Delta$,
\begin{equation}\label{equalizer}
\begin{tikzpicture}[descr/.style={fill=white},baseline=(A.base)] 
\node (A) at (-1,0) {$A$};
\node (B) at (4,0) {$A \ox B \ox A $};
\node (C) at (-3,0) {$HKer(\alpha)$};
\path[->,font=\scriptsize]
([yshift=-4pt]A.east) edge node[below] {$(1_A \ox \alpha \ox 1_A) \cdot (\Delta \ox 1_A) \cdot \Delta$} ([yshift=-4pt]B.west)
([yshift=0pt]C.east) edge node[above] {$\kappa_\alpha$} ([yshift=0pt]A.west)
([yshift=4pt]A.east) edge node[above] {$(1_A \ox u_B \ox 1_A) \cdot \Delta$} ([yshift=4pt]B.west)
;
\end{tikzpicture}.
\end{equation}
We recall that the equalizer in ${\blue {\sf Vect}_K}$ of $f,g \colon A \rightarrow B$ is given by $\{ a \in A \mid f(a) = g(a) \}$. 
We also define the following equalizers 
\begin{center}
\begin{tikzpicture}[descr/.style={fill=white},baseline=(A.base)] 
\node (A) at (-1,0) {$A$};
\node (B) at (3,0) {$B \ox A $};
\node (C) at (-3,0) {$LKer(\alpha)$};
\path[->,font=\scriptsize]
([yshift=-4pt]A.east) edge node[below] {$(\alpha \ox 1_A) \cdot \Delta$} ([yshift=-4pt]B.west)
([yshift=0pt]C.east) edge node[above] {$\kappa_{\alpha,L}$} ([yshift=0pt]A.west)
([yshift=4pt]A.east) edge node[above] {$ u_B \ox 1_A $} ([yshift=4pt]B.west)
;
\end{tikzpicture},
\end{center}
\begin{center}
\begin{tikzpicture}[descr/.style={fill=white},baseline=(A.base)] 
\node (A) at (-1,0) {$A$};
\node (B) at (3,0) {$A \ox B $};
\node (C) at (-3,0) {$RKer(\alpha)$};
\path[->,font=\scriptsize]
([yshift=-4pt]A.east) edge node[below] {$(1_A \ox \alpha ) \cdot \Delta$} ([yshift=-4pt]B.west)
([yshift=0pt]C.east) edge node[above] {$\kappa_{\alpha,R}$} ([yshift=0pt]A.west)
([yshift=4pt]A.east) edge node[above] {$ 1_A \ox u_B$} ([yshift=4pt]B.west)
;
\end{tikzpicture}.
\end{center}
\begin{proposition}\label{ex Lker=Hker}
A split epimorphism \eqref{HKer=LKer} satisfying the condition $HKer(\alpha) = LKer(\alpha)$ is an extension of associative Hopf algebras (Definition \ref{definition split extenion Hopf algebras}) in the symmetric monoidal category ${\blue {\sf Vect}_K}$.
\end{proposition}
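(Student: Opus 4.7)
The plan is to construct a candidate retraction $\lambda \colon A \to HKer(\alpha)$ and to check the axioms of Definition~\ref{definition split extenion Hopf algebras}. We first observe that the equalizer $HKer(\alpha)$ admits a canonical Hopf subalgebra structure making $\kappa_\alpha$ a Hopf algebra morphism; in particular $\alpha\cdot\kappa_\alpha=u_B\cdot\epsilon_{HKer(\alpha)}$, since $HKer(\alpha)\subseteq LKer(\alpha)$. The natural candidate for $\lambda_0 \coloneqq \kappa_\alpha\cdot\lambda$ is given by the convolution product
\[\lambda_0 \;=\; m\cdot (1_A\otimes S\cdot e\cdot\alpha)\cdot\Delta\colon A\to A,\]
which in Sweedler notation reads $\lambda_0(a)=a_{(1)}\cdot S(e(\alpha(a_{(2)})))$. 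Since $e\alpha$ is a bialgebra endomorphism of $A$ and $Se\alpha$ is its convolution inverse, one immediately obtains $\alpha\cdot\lambda_0=u_B\cdot\epsilon_A$ and $\lambda_0\ast(e\alpha)=1_A$.

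The main obstacle is showing that $\lambda_0$ factors through $\kappa_\alpha$. A direct Sweedler computation, using that $\alpha$ commutes with the antipode, $\alpha\cdot e=1_B$, and the antihomomorphism property of $S$, yields
\[(1_A\otimes\alpha)\cdot\Delta\cdot\lambda_0 \;=\; \lambda_0\otimes 1_B,\]
so $\lambda_0(A)\subseteq RKer(\alpha)$. Now $HKer(\alpha)$ is stable under $S$, and $S$ exchanges $LKer(\alpha)$ with $RKer(\alpha)$ (in the setting of \cite{A} the antipode is bijective, so this is a true bijection between the two kernels). Combining this with the hypothesis $HKer(\alpha)=LKer(\alpha)$, one deduces that $HKer(\alpha)=LKer(\alpha)=RKer(\alpha)$. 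Hence $\lambda_0$ takes values in the image of $\kappa_\alpha$, producing a unique morphism $\lambda\colon A\to HKer(\alpha)$ with $\kappa_\alpha\cdot\lambda=\lambda_0$.

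Once $\lambda$ is in hand, the axioms of Definition~\ref{definition split extenion Hopf algebras} follow from routine verifications. Conditions $(6')$--$(9')$ are trivial by the associativity of $A$, and $(10')$--$(11')$ hold automatically in the associative setting thanks to the uniqueness of the antipode. Axiom $(3')$ is just the convolution identity $\lambda_0\ast(e\alpha)=1_A$. The identities in $(1')$--$(2')$ reduce to short Sweedler computations using $\alpha\kappa_\alpha=u_B\epsilon_X$ and the standard antipode relation $m(1\otimes S)\Delta=u_A\epsilon_A$; for instance $\lambda\kappa_\alpha(x)=x$ follows from $\kappa_\alpha\lambda\kappa_\alpha(x)=\kappa_\alpha(x_{(1)})\cdot Se(u_B\epsilon_X(x_{(2)}))=\kappa_\alpha(x)$. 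Axiom $(4')$ is obtained by computing
\[\lambda_0(\kappa(x)e(b)) \;=\; \kappa(x_{(1)})\,e(b_{(1)})\,\epsilon_X(x_{(2)})\,Se(b_{(2)}) \;=\; \kappa(x)\,\epsilon_B(b),\]
using again $\alpha\kappa=u_B\epsilon_X$ and that $e$ is a Hopf morphism. Axiom $(5')$ reduces, by injectivity of $\kappa_\alpha$, to an equality of morphisms $B\otimes X\to A$ that follows from the same two facts. Finally $(12')$, the fact that $\lambda$ preserves the comultiplication and the unit, follows from the corresponding property of $\lambda_0$, which is a convolution product of coalgebra morphisms, together with the unitality of both $1_A$ and $Se\alpha$.
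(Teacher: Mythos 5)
Your overall strategy coincides with the paper's: the same candidate $\lambda_0 = m\cdot(1_A\otimes (S\cdot e\cdot\alpha))\cdot\Delta$, the same factorization argument through $RKer(\alpha)=HKer(\alpha)$, and the same distribution of work among the axioms. However, two of your justifications fail as stated. The more serious one concerns axiom $(12')$: a convolution product of coalgebra morphisms is \emph{not} in general a coalgebra morphism. In Sweedler notation, $\Delta(\lambda_0(a)) = a_{(1)}Se\alpha(a_{(4)})\otimes a_{(2)}Se\alpha(a_{(3)})$, whereas $(\lambda_0\otimes\lambda_0)\Delta(a) = a_{(1)}Se\alpha(a_{(2)})\otimes a_{(3)}Se\alpha(a_{(4)})$; these agree for cocommutative $A$ but not in general. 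The comultiplicativity of $\lambda$ is precisely where the hypothesis $HKer(\alpha)=RKer(\alpha)$ must be used a second time: once $\lambda_0$ is known to land in $RKer(\alpha)=HKer(\alpha)$, the equalizer identity $y_{(1)}\otimes\alpha(y_{(2)})\otimes y_{(3)} = y_{(1)}\otimes 1_B\otimes y_{(2)}$ for $y=\lambda_0(a)$ is what allows one to pass from the first expression to the second; this is exactly the content of Figure~\ref{lamdba coalgebra} together with Figure~\ref{DiagramA3} in the paper, where the central rectangle is labelled as commuting ``since $RKer(\alpha)=HKer(\alpha)$''.

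The second gap is in axiom $(5')$. After composing with $1_B\otimes\kappa_\alpha$, the identity you need is $b_{(1)}\otimes e(b_{(2)})\kappa(x)Se(b_{(3)}) = b_{(3)}\otimes e(b_{(1)})\kappa(x)Se(b_{(2)})$, and this does not follow from $\alpha\cdot\kappa=u_B\cdot\epsilon_X$ and $e$ being a Hopf morphism alone — for non-cocommutative $B$ it fails for a generic element of $A$ in place of $\kappa(x)$. It again requires normality: since $y=e(b_{(1)})\kappa(x)Se(b_{(2)})$ lies in $LKer(\alpha)=HKer(\alpha)$, applying $(\alpha\otimes 1_A)\cdot\Delta$ to $y$ gives $b_{(1)}S(b_{(4)})\otimes e(b_{(2)})\kappa(x)Se(b_{(3)}) = 1_B\otimes e(b_{(1)})\kappa(x)Se(b_{(2)})$, and multiplying the first leg on the right by one further Sweedler component of $b$ yields the desired equality; this is the factorization fact the paper invokes for $(5')$. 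The remainder of your plan — the factorization of $\lambda_0$ through $RKer(\alpha)$, the equivalence $HKer(\alpha)=LKer(\alpha)\Leftrightarrow HKer(\alpha)=RKer(\alpha)$, and the verifications of $(1')$--$(4')$ and $(6')$--$(11')$ — is correct and matches the paper's argument.
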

\begin{proof}
{\red First, we recall that for any morphism $\alpha$  in $\mathsf{ AssHopf}_{K}$ it is well-known that the following conditions are equivalent (see \cite{A}) 
\begin{itemize}
\item $HKer(\alpha) = LKer(\alpha)$,
\item $HKer(\alpha) = RKer(\alpha)$,
\item $LKer(\alpha) = RKer(\alpha)$,
\item $LKer(\alpha)$ is an associative Hopf algebra,
\item $RKer(\alpha)$ is an associative Hopf algebra.
\end{itemize}}
Let
\begin{tikzpicture}[descr/.style={fill=white},baseline=(A.base)] 
\node (A) at (0.4,0) {$A$};
\node (B) at (2.5,0) {$B$};
\path[->,font=\scriptsize]
([yshift=-4pt]A.east) edge node[below] {$\alpha$} ([yshift=-4pt]B.west)
([yshift=2pt]B.west) edge node[above] {$e$} ([yshift=2pt]A.east);
\end{tikzpicture}
be a split epimorphism of associative Hopf algebras satisfying the condition $HKer(\alpha) = LKer(\alpha)${\blue . Since $A$ is an associative Hopf algebra,} we can define the following section of $\kappa_\alpha \colon HKer(\alpha) \rightarrow A$ 
\[ \lambda = m \cdot (1_A \ox (S \cdot e \cdot \alpha)) \ox \Delta.\]
First, we use the condition  $HKer(\alpha) = RKer(\alpha)$ on the kernel to prove that $\lambda$ factors through $HKer(\alpha)$,
\begin{center}
\begin{tikzpicture}[descr/.style={fill=white},baseline=(A.base),xscale=1.8,yscale=1.5] 
\node (A0) at (0,0) {$ A^2$};
\node (A1) at (2,0) {$ A^2$};
\node (A4) at (8,0) {$A \ox B .$};

\node (B3) at (6,1) {$A^2 \ox B^2$};
\node (C1) at (2,2) {$A^3 $};
\node (C2) at (4,2) {$A^4 $};
\node (C3) at (6,2) {$A^4 $};
\node (C4) at (8,2) {$A^2 $};
\node (D1) at (2,3) {$A^3$};
\node (D2) at (4,3) {$A^4$};
\node (E1) at (2,4) {$A^2 $};
\node (E2) at (4,4) {$ A^4$};
\node (E3) at (6,4) {$A^4$};
\node (F0) at (0,5) {$A$};
\node (F2) at (4,5) {$A^2$};
\node (F3) at (6,5) {$A^2$};
\node (F4) at (8,5) {$A$};
\node at (3,4) {$\eqref{coass comultiplication}$};
\node at (1,2) {$\eqref{counit comultiplication}$};
\node at (7,4) {$\eqref{m et delta}$};
\node at (3,1) {$\eqref{antipode}$};
\path[->,font=\scriptsize]
(A0.east) edge node[above] {$1_A \ox (S \cdot e \cdot \alpha) $} (A1.west)
(A1.east) edge node[above] {$m \ox u_B$} (A4.west)
(C1.east) edge node[above] {$1_A \ox 1_A \ox \Delta$} (C2.west)
(C2.east) edge node[above] {$(1_A \ox (S \cdot e \cdot \alpha))^2$} (C3.west)
(C3.east) edge node[above] {$m \ox m$} (C4.west)
(F0.east) edge node[above] {$\Delta$} (F2.west)
(F2.east) edge node[above] {$1_A \ox (S \cdot e \cdot \alpha)$} (F3.west)
(F3.east) edge node[above] {$ m$} (F4.west)
(B3.south east) edge node[descr] {$m \ox m$} (A4.north west)
(F0.south) edge node[descr] {$\Delta $} (A0.north)
(F0.south east) edge node[descr] {$\Delta$} (E1.north west)
(E1.south) edge node[descr] {$1_A \ox \Delta$} (D1.north)
(D1.south) edge node[descr] {$1_A \ox  {\blue \sigma_{A,A}}$} (C1.north)
(C1.south) edge node[descr] {$1_A \ox (S \cdot e \cdot \alpha) \ox {\blue \epsilon_A}$} (A1.north)
(F2.south) edge node[descr] {$\Delta \ox \Delta$} (E2.north)
(E2.south) edge node[descr] {$1_A \ox 1_A \ox  {\blue \sigma_{A,A}}$} (D2.north)
(C2.south east) edge node[descr] {$1_A \ox (S \cdot e \cdot \alpha) \ox \alpha \ox (S \cdot \alpha) \; \; \; \; \; \; \; \; \; \; \; \; \; \; \; \;  \; \; \; \; \; $} (B3.north west)
(D2.south) edge node[descr] {$1_A \ox  {\blue \sigma_{A,A}} \ox 1_A $} (C2.north)
(F3.south) edge node[descr] {$\Delta \ox \Delta$} (E3.north)
(E3.south) edge node[descr] {$1_A \ox  {\blue \sigma_{A,A}} \ox 1_A$} (C3.north)
(C3.south) edge node[descr] {$(1_A)^2 \ox \alpha^2$} (B3.north)
(F4.south) edge node[descr] {$\Delta$} (C4.north)
(C4.south) edge node[descr] {$1_A \ox  \alpha$} (A4.north)
;
\end{tikzpicture}
 \end{center}
By using that $\lambda$ factors through $RKer(\alpha) = HKer(\alpha)$, we prove that $\Delta \cdot \lambda = (\lambda \otimes \lambda) \cdot \Delta$ in Figure \ref{lamdba coalgebra}. 
Indeed, the central rectangle commutes since $RKer(\alpha) = HKer(\alpha)$, the commutativity of the part $(A)$ is clarified in Figure \ref{DiagramA3} (in the appendix).
\begin{figure}
\begin{tikzpicture}[descr/.style={fill=white},baseline=(A.base),xscale=1.5,yscale=1.5] 
\node (A0) at (1,0) {$ A^2$};
\node (A6) at (10,0) {$A^2 $};
\node (B5) at (8,1) {$A^3$};
\node (C1) at (2,2) {$A $};
\node (C2) at (4,2) {$A^2 $};
\node (C3) at (6,2) {$A^3 $};
\node (C4) at (8,2) {$A \ox B \ox A $};
\node (D1) at (2,3) {$A^2$};
\node (D4) at (8,3) {$A^2$};
\node (D6) at (10,3) {$A^2$};
\node (E1) at (2,4) {$A^2 $};
\node (F1) at (2,5) {$A $};
\node (F2) at (4,5) {$A^2 $};
\node (F3) at (6,5) {$A^2 $};
\node (F4) at (8,5) {$A $};
\node (G0) at (1,6) {$ A$};
\node (G6) at (10,6) {$A $};
\node at (4,1) {$(A)$};
\node at (9,2) {$\eqref{unital multiplication}$};
\draw[commutative diagrams/.cd, ,font=\scriptsize]
(G0.south east) edge[commutative diagrams/equal]  (F1.north west)
(D4.east) edge[commutative diagrams/equal]  (D6.west)
(D6.south) edge[commutative diagrams/equal]  (A6.north);
\path[->,font=\scriptsize]
(A0.east) edge node[above] {$\lambda \ox \lambda $} (A6.west)
(C1.east) edge node[above] {$\Delta$} (C2.west)
(C2.east) edge node[above] {$1_A \ox  \Delta$} (C3.west)
(C3.east) edge node[above] {$1_A \ox \alpha \ox 1_A$} (C4.west)
(F1.east) edge node[above] {$\Delta$} (F2.west)
(F2.east) edge node[above] {$1_A \ox  (S \cdot e \cdot \alpha)$} (F3.west)
(F3.east) edge node[above] {$m$} (F4.west)
(G0.east) edge node[above] {$\lambda$} (G6.west)
(B5.south east) edge node[descr] {$m \ox 1_A$} (A6.north west)
(C4.south) edge node[descr] {$1_A \ox (S \cdot e)  \ox 1_A $} (B5.north)
(G6.south) edge node[descr] {$\Delta$} (D6.north)
(F4.south) edge node[descr] {$\Delta$} (D4.north)
(D4.south) edge node[descr] {$1_A \ox u_B \ox 1_A$} (C4.north)
(F1.south) edge node[descr] {$\Delta $} (E1.north)
(E1.south) edge node[descr] {$1_A \ox (S \cdot e \cdot \alpha)$} (D1.north)
(D1.south) edge node[descr] {$m $} (C1.north)
(G0.south) edge node[descr] {$\Delta $} (A0.north)
;
\end{tikzpicture}
\caption{$\lambda$ is a morphism of coalgebras}
\label{lamdba coalgebra}
 \end{figure}
The condition $(3')$ is trivially respected.   
The condition $(4')$ is also satisfied by this definition of $\lambda$ thanks to the commutativity of Figure \ref{Condition (4')}, where we use that $HKer(\alpha) = RKer(\alpha)$,
\begin{figure}
\centering
\begin{tikzpicture}[descr/.style={fill=white},baseline=(A.base),xscale=1.6
,yscale=1.2] 
\node (A0) at (0.75,0) {$ HKer(\alpha) \ox B$};
\node (A1) at (2,0) {$ A$};
\node (A5) at (9.5,0) {$A$};
\node (B2) at (4,1) {$ A \ox B $};
\node (B4) at (8,1) {$A^2$};
\node (C0) at (0.75,2) {$HKer(\alpha) \ox B $};
\node (C1) at (2,2) {$A $};
\node (C2) at (4,2) {$A^2 $};
\node (C4) at (8,2) {$A^3 $};
\node (D4) at (8,3) {$A^4$};
\node (D5) at (9.5,3) {$A^2$};
\node (E1) at (2,4) {$A^2 \ox B$};
\node (E2) at (4,4) {$ A \ox B \ox A$};
\node (E3) at (6,4) {$A \ox B^2 \ox A$};
\node (E4) at (8,4) {$A^4$};
\node (F1) at (2,5) {$A \ox B$};
\node (F2) at (4,5) {$A^2 \ox B^2$};
\node (F3) at (6,5) {$(A \ox B)^2$};
\node (G2) at (4,6) {$ A^4$};
\node (G4) at (8,6) {$A^4$};
\node (G5) at (9.5,6) {$A^2$};
\node (H0) at (0.75,7) {$ HKer(\alpha) \ox B$};
\node (H2) at (4,7) {$ A^2$};
\node (H5) at (9.5,7) {$A$};
\node at (6,0.5) {$\eqref{unital multiplication}$};
\node at (6,1.5) {$\eqref{unital multiplication}$};
\node at (6,3) {$\eqref{antipode}$};
\node at (2,1) {$HKer(\alpha) = RKer(\alpha)$};
\node at (8.75,2) {$(ass)$};
\node at (7,6.5) {$\eqref{m et delta}$};
\draw[commutative diagrams/.cd, ,font=\scriptsize]
(H0.south) edge[commutative diagrams/equal]  (C0.north)
(C0.south) edge[commutative diagrams/equal]  (A0.north)
(A1.east) edge[commutative diagrams/equal]  (A5.west);
\path[->,font=\scriptsize]
(A0.east) edge node[above] {$\kappa_\alpha \ox {\blue \epsilon_B} $} (A1.west)
(C0.east) edge node[above] {$\kappa_\alpha \ox {\blue \epsilon_B} $} (C1.west)
(C1.east) edge node[above] {$\Delta$} (C2.west)
(C2.east) edge node[above] {$1_A \ox u_A  \ox (S \cdot e \cdot \alpha)$} (C4.west)
(B2.east) edge node[above] {$1_A \ox  (S \cdot e)$} (B4.west)
(D4.east) edge node[above] {$m  \ox m$} (D5.west)
(E1.east) edge node[above] {$1_A \ox  {\blue \sigma_{A,B}}  $} (E2.west)
(E2.east) edge node[above] {$1_A \ox \Delta \ox 1_A$} (E3.west)
(E3.east) edge node[above] {$1_A \ox e^2 \ox 1_A$} (E4.west)
(F1.east) edge node[above] {$ \Delta \ox \Delta$} (F2.west)
(F2.east) edge node[above] {$1_A \ox  {\blue \sigma_{A,B}}  \ox 1_A$} (F3.west)
(G2.east) edge node[above] {$1_A \ox  {\blue \sigma_{A,A}}  \ox 1_A$} (G4.west)
(G4.east) edge node[above] {$m \ox m$} (G5.west)
(H0.east) edge node[above] {$\kappa_\alpha \ox e$} (H2.west)
(H2.east) edge node[above] {$m$} (H5.west)
(H0.south east) edge node[descr] {$\kappa_\alpha \ox 1_B$} (F1.north)
(E4.south) edge node[descr] {$\; \; \; \; \; (1_A)^2 \ox (S \cdot e \cdot \alpha)^2$} (D4.north)
(F1.south) edge node[descr] {$\Delta \ox 1_B$} (E1.north)
(E3.south east) edge node[left] {$1_A \ox e \ox (S \cdot e) \ox (S \cdot e \cdot \alpha) \; \; \; \; \; $} (D4.north west)
(H2.south) edge node[descr] {$\Delta \ox \Delta $} (G2.north)
(E2.south) edge node[descr] {$1_A \ox {\blue \epsilon_B} \ox 1_A $} (C2.north)
(F3.south) edge node[descr] {$1_A \ox 1_B \ox  {\blue \sigma_{A,B}} $} (E3.north)
(G4.south) edge node[descr] {$(1_A)^2 \ox  {\blue \sigma_{A,A}}$} (E4.north)
(D4.south) edge node[descr] {$1_A \ox m \ox 1_A$} (C4.north)
(C4.south) edge node[descr] {$1_A \ox m $} (B4.north)
(C2.south) edge node[descr] {$1_A \ox \alpha $} (B2.north)
(A1.north east) edge node[descr] {$1_A \ox u_B $} (B2.south west)
(F3.north east) edge node[descr] {$(1_A \ox e)^2 $} (G4.south west)
(H5.south) edge node[descr] {$\Delta $} (G5.north)
(D5.south) edge node[descr] {$m $} (A5.north)
(G5.south) edge node[descr,xshift=-10pt] {$1_A \ox (S \cdot e \cdot \alpha)$} (D5.north)
(B4.south) edge node[descr] {$m$} (A5.north west)
;
\end{tikzpicture} 
\caption{Condition $(4')$}
\label{Condition (4')}
 \end{figure}
 The last condition $(5')$ is left to the reader, to prove it we use the fact that $\lambda \cdot m \cdot (e \ox \kappa_\alpha)$ factors through $HKer(\alpha)$.

To conclude, it is a split extension of associative Hopf algebras. 
\qed
\end{proof}

Notice that this proposition can be extended to any symmetric monoidal category with equalizers that are preserved by all endofunctors on $\mathcal{C}$ of the form $ - \ox X$ and $X \ox -$.

 2) In the symmetric monoidal category $({\blue {\sf Vect}_K},\ox,K)$, an exact sequence of associative Hopf algebras (Definition \ref{def exact}) 
\begin{equation}\label{ex vect}
\begin{tikzpicture}[descr/.style={fill=white},baseline=(A.base)] 
\node (A) at (0,0) {$C'$};
\node (B) at (2.5,0) {$B'$};
\node (C) at (-2.5,0) {$A'$};
\path[->,font=\scriptsize]
(C.east) edge node[below] {$\iota$} (A.west)
([yshift=-4pt]A.east) edge node[below] {$\pi$} ([yshift=-4pt]B.west)
([yshift=2pt]B.west) edge node[above] {$e$} ([yshift=2pt]A.east);
\end{tikzpicture} ,
\end{equation}
such that $\pi $ is a split epimorphism of Hopf algebras, is an exact cleft sequence and a split extension of associative Hopf algebras. Indeed, since the condition 4) in Definition \ref{def exact} is equivalent to the condition $LKer(\pi) = HKer(\pi)$ \cite{A}, this example is a particular case of Proposition \ref{ex Lker=Hker}. {\rouge Due to Definition \ref{def exact} the sequence \eqref{ex vect} has to be isomorphic to the following one, 
\begin{center}
\begin{tikzpicture}[descr/.style={fill=white},baseline=(A.base)] 
\node (A) at (0,0) {$C'$};
\node (B) at (3.5,0) {$\frac{C'}{C'(LKer(\pi))^+}$};
\node (C) at (-2.5,0) {$HKer(\pi)$};
\path[->,font=\scriptsize]
(C.east) edge node[above] {$\kappa_\pi$} (A.west)
([yshift=-4pt]A.east) edge node[below] {$\pi$} ([yshift=-4pt]B.west)
([yshift=2pt]B.west) edge node[above] {$e$} ([yshift=2pt]A.east);
\end{tikzpicture} ,
\end{center}
where $HKer(\pi) = LKer(\pi)$.}

3) If we consider cocommutative associative Hopf $K$-algebras, then we can drop the condition $HKer(\alpha)= RKer(\alpha)$ in Proposition \ref{ex Lker=Hker}. So any split epimorphism of cocommutative associative Hopf algebras induces a split extension as defined in \ref{definition split extenion Hopf algebras} {\red (and an exact cleft sequence)}. The Corollary \ref{cor Hopf algebra} becomes the well-known equivalence between points over $B$ and $B$-module Hopf algebras \cite{VW}.

\section{{\blue Conclusion}}
 To sum up, we defined the category $\sf SplitExt({\blue BiAlg_\C})$ and proved that this category is equivalent to the category $\sf Act({\blue BiAlg_\C})$.
 Moreover, we proved that a suitable version of the Split Short Five Lemma holds when the split extensions occurring in it belong to the category $\sf SplitExt({\blue BiAlg_\C})$. These results were proved to hold also in the category of Hopf algebras, and we gave some examples of split extensions of Hopf algebras in the categories of sets and of vector spaces. It is worthwhile to observe that any isomorphism $\gamma : A \rightarrow A$ of Hopf algebras determines a split extension in the sense of Definition \ref{definition split extenion Hopf algebras} as indicated in the following diagram
\begin{center}
\begin{tikzpicture}[descr/.style={fill=white},baseline=(A.base)] 
\node (A) at (0,0) {$A$};
\node (B) at (2.5,0) {$A$};
\node (C) at (-2.5,0) {$I$};
\path[->,font=\scriptsize]
([yshift=-4pt]C.east) edge node[below] {$u$} ([yshift=-4pt]A.west)
([yshift=2pt]A.west) edge node[above] {${\blue \epsilon_A}$} ([yshift=2pt]C.east)
([yshift=-4pt]A.east) edge node[below] {$\gamma$} ([yshift=-4pt]B.west)
([yshift=2pt]B.west) edge node[above] {$\gamma^{-1}$} ([yshift=2pt]A.east);
\end{tikzpicture}. 
\end{center} This elementary example motivates the study of internal structures in the context of non-associative bialgebras and Hopf algebras. Indeed, thanks to this example, a discrete reflexive graph
\begin{equation}\label{discreet}
\begin{tikzpicture}[descr/.style={fill=white},baseline=(A.base)] 
\node (A) at (0,0) {$A$};
\node (B) at (2.5,0) {$A$};
\node (C) at (-2.5,0) {$I$};
\path[->,font=\scriptsize]
([yshift=-4pt]C.east) edge node[below] {$u$} ([yshift=-4pt]A.west)
([yshift=2pt]A.west) edge node[above] {$ {\blue \epsilon_A} $} ([yshift=2pt]C.east)
([yshift=-8pt]A.east) edge node[below] {$1_A$} ([yshift=-8pt]B.west)
([yshift=0pt]B.west) edge node[descr] {$1_A$} ([yshift=0pt]A.east)
([yshift=6pt]A.east) edge node[above] {$1_A$} ([yshift=6pt]B.west);
\end{tikzpicture} ,
\end{equation}
 is a reflexive graph in $\sf Hopf_\C$ such that {\blue its ``legs'' are} in $\sf SplitExt(Hopf_\C)$. Such an internal structure is different from the internal structure called \emph{pre-cat$^1$-Hopf algebra} in \cite{Vilaboa}. Indeed, the discrete reflexive graph \eqref{discreet} is not a pre-cat$^1$-Hopf algebra without asking that $A$ is cocommutative.
 The example \eqref{discreet} suggests that the adequate internal notion corresponding to a precrossed module of Hopf Algebras (as defined in \cite{Majid}) is the one of a reflexive graph such that one of the two ``legs" is a split extension of Hopf algebras.  In {\blue a} forthcoming paper, we will construct an equivalence of categories between these two structures, and we will investigate the equivalence of categories between Hopf crossed modules (as defined in \cite{Majid}) and internal structures that we will call \emph{cat$^1$-Hopf algebras}. Similarly {\blue to} what we did in this paper we will work with non-associative bialgebras, associative bialgebras, non-associative Hopf algebras and associative Hopf algebras in any symmetric monoidal category.



\textit{Data sharing not applicable to this article as no datasets were generated or analysed during the current study. }


%

\appendix
\section{Appendix}

This appendix contains {\blue five figures} given below. The monoidal product is denoted by juxtaposition. {\blue Figure \ref{DiagramA3} is used in the proof of Proposition \ref{ex Lker=Hker}.} By combining the diagrams of Figure \ref{sm bialgebra 1} and Figure \ref{sm bialgebra 2}, we show that the structure of $X \rtimes B$ as defined in \eqref{semi-direct product} gives a bialgebra structure. Thanks to the commutativity of the diagram of Figure \ref{sm ass}, we  can conclude that whenever $X$ and $B$ are associative bialgebras and \eqref{ass action_1} and \eqref{ass action_2} are satisfied $m_{X\rtimes B}$ as defined in \eqref{semi-direct product} is associative, which is a part of the proof of Lemma \ref{ass bialgebra}.  Finally, the commutativity of Figure \ref{combinationABC} allows one to prove Proposition \ref{prop lamda morph}.
\setcounter{figure}{12}
\begin{figure}[h]
 \centering
\begin{tikzpicture}[descr/.style={fill=white},baseline=(A.base),
xscale=1.7,yscale=1.8] 
\node (A0) at (2.5,0) {$ A^4$};
\node (A2) at (4,0) {$ A^4$};
\node (A6) at (11,0) {$A^2$};
\node (B2) at (4,1) {$ A ^4$};
\node (B3) at (6,1) {$A^5$};
\node (B4) at (8,1) {$A^5$};
\node (B5) at (9.5,1) {$A^3$};
\node (C4) at (8,2) {$A^6 $};
\node (C5) at (9.5,2) {$A^4 $};
\node (C6) at (11,2) {$A^3 $};
\node (D2) at (4,3) {$A^4$};
\node (D3) at (6,3) {$A^6$};
\node (D4) at (8,3) {$A^6$};
\node (D5) at (9.5,3) {$A^4$};
\node (E0) at (2.5,4) {$ A^4$};
\node (E5) at (9.5,4) {$A \ospace B^2 \ospace A$};
\node (E6) at (11,4) {$A \ospace B \ospace A$};
\node (F2) at (4,5) {$A^5$};
\node (F3) at (6,5) {$A^6$};
\node (F4) at (8,5) {$A^6$};
\node (F5) at (9.5,5) {$A^6$};
\node (F6) at (11,5) {$A^3$};
\node (G3) at (6,6) {$X^6$};
\node (G4) at (8,6) {$A^6$};
\node (G5) at (9.5,6) {$A^6$};
\node (H1) at (3,7) {$ A^3$};
\node (H2) at (4,7) {$ A^3$};
\node (H3) at (6,7) {$A^4$};
\node (H4) at (8,7) {$A^4$};
\node (H6) at (11,7) {$A^2$};
\node (I2) at (4,8) {$ A^4$};
\node (I3) at (6,8) {$A^4$};
\node (I4) at (8,8) {$A^4$};
\node (J0) at (2.5,9) {$A$};
\node (J1) at (3,9) {$A^2$};
\node (J4) at (8,9) {$A^2$};
\node (J6) at (11,9) {$A$};
\node at (10,8) {$\eqref{m et delta}$};
\node at (10,1.5) {$(ass)$};
\node at (4,7.5) {$\eqref{coass comultiplication}$};
\node at (3,6) {$\eqref{counit comultiplication}$};
\node at (6,0.5) {$\eqref{unital multiplication}$};
\node at (5,3) {$\eqref{antipode}$};
\draw[commutative diagrams/.cd, ,font=\scriptsize]
(B2.south) edge[commutative diagrams/equal]  (A2.north);
\path[->,font=\scriptsize]
(A0.east) edge node[above] {$(1_A \ospace (S \cdot e \cdot \alpha))^2 $} (A2.west)
(A2.east) edge node[above] {$m \ospace m $} (A6.west)
(B2.east) edge node[above] {$(1_A)^2 \ospace u_A \ospace  (1_A)^2$} (B3.west)
(B3.east) edge node[above] {$\sigma_{A^2,A}  \ospace  (1_A)^2$} (B4.west)
(B4.east) edge node[above] {$m \ospace 1_A \ospace m  $} (B5.west)
(C4.east) edge node[above] {$m \ospace (1_A)^2 \ospace m  $} (C5.west)
(C5.east) edge node[above] {$1_A \ospace m \ospace 1_A $} (C6.west)
(D3.east) edge node[above] {$1_A \ospace  {\blue \sigma_{A,A}} \ospace (1_A)^3$} (D4.west)
(D4.east) edge node[above] {$m \ospace (1_A)^2 \ospace m$} (D5.west)
(E5.east) edge node[below] {$ 1_A \ospace m \ospace 1_A$} (E6.west)
(F2.east) edge node[above] {$(1_A)^2 \ospace \Delta  \ospace (1_A)^2$} (F3.west)
(F4.east) edge node[above] {$(1_A \ospace (S \cdot e \cdot \alpha))^3  $} (F5.west)
(F5.east) edge node[above] {$m \ospace m \ospace m$} (F6.west)
(G3.east) edge node[above,text width=2.35cm] {$\; \;(1_A)^2 \ospace (S \cdot e \cdot \alpha)^2 \ospace 1_A \ospace (S \cdot e \cdot \alpha)$} (G4.west)
(H1.east) edge node[above] {$1_A \ospace  {\blue \sigma_{A,A}}$} (H2.west)
(H2.east) edge node[above] {$(1_A)^2 \ospace \Delta$} (H3.west)
(H3.east) edge node[above] {$(1_A \ospace (S \cdot e \cdot \alpha))^2$} (H4.west)
(I2.east) edge node[above] {$(1_A)^2 \ospace  {\blue \sigma_{A,A}}$} (I3.west)
(I3.east) edge node[above] {$(1_A)^2 \ospace (S \cdot e \cdot \alpha)^2$} (I4.west)
(H4.east) edge node[above] {$m \ospace m$} (H6.west)
(J4.east) edge node[above] {$m$} (J6.west)
(J1.east) edge node[above] {$1_A \ospace (S \cdot e \cdot \alpha)$} (J4.west)
(J0.east) edge node[above] {$\Delta$} (J1.west)
(J0.south) edge node[descr] {$\Delta$} (E0.north)
(E0.south) edge node[descr,xshift=5pt] {$\Delta \ospace \Delta$} (A0.north)
(J1.south) edge node[descr] {$\Delta \ospace 1_A$} (H1.north)
(J1.south east) edge node[descr] {$\Delta \ospace \Delta$} (I2.north west)
(F2.south) edge node[descr] {$(1_A)^2  \ospace {\blue \epsilon_A} \ospace (1_A)^2 $} (D2.north)
(D2.south) edge node[descr] {$(1_A \ospace (S \cdot e \cdot \alpha))^2$} (B2.north)
(D3.south) edge node[descr] {$(1_A)^2 \ospace m \ospace 1_A$} (B3.north)
(F3.south) edge node[descr,text width=3cm] {$1_A \ospace (S \cdot e \cdot \alpha) \ospace (e \cdot \alpha) \ospace (S \cdot e \cdot \alpha)  \ospace 1_A \ospace (S \cdot e \cdot \alpha)$} (D3.north)
(H3.south) edge node[descr] {$\Delta^2 \ospace (1_A)^2 $} (G3.north)
(I3.south) edge node[descr] {$1_A \ospace  {\blue \sigma_{A,A}} \ospace 1_A $} (H3.north)
(I4.south) edge node[descr] {$1_A \ospace  {\blue \sigma_{A,A}} \ospace 1_A $} (H4.north)
(G4.south) edge node[descr] {$1_A \ospace  {\blue \sigma_{A,A}} \ospace (1_A)^3 $} (F4.north)
(F4.south) edge node[descr,text width=3cm] {$1_A \ospace (S \cdot e \cdot \alpha)^2  \ospace (S \cdot S \cdot e \cdot \alpha)  \ospace 1_A \ospace (S \cdot e \cdot \alpha)$} (D4.north)
(D4.south) edge node[descr] {$(1_A)^2 \ospace  {\blue \sigma_{A,A}} \ospace (1_A)^2 $} (C4.north)
(C4.south) edge node[descr] {$1_A \ospace m \ospace (1_A)^3 $} (B4.north)
(G5.south) edge node[descr] {$1_A \ospace  {\blue \sigma_{A,A}} \ospace (1_A)^3 $} (F5.north)
(D5.south) edge node[descr] {$1_A \ospace  {\blue \sigma_{A,A}} \ospace 1_A $} (C5.north)
(E5.south) edge node[descr] {$1_A \ospace (S \cdot e)^2 \ospace 1_A $} (D5.north)
(F5.south) edge node[descr] {$m \ospace (\alpha)^2 \ospace m$} (E5.north)
(J6.south) edge node[descr] {$\Delta$} (H6.north)
(H6.south) edge node[descr] {$\Delta \ospace 1_A$} (F6.north)
(F6.south) edge node[descr,xshift=-10pt] {$1_A \ospace \alpha \ospace 1_A$} (E6.north)
(E6.south) edge node[descr,xshift=-15pt] {$1_A \ospace (S \cdot e) \ospace 1_A$} (C6.north)
(C6.south) edge node[descr] {$m \ospace 1_A$} (A6.north)
(B5.south east) edge node[descr] {$m \ospace 1_A$} (A6.north west)
(H4.south east) edge node[descr] {$\Delta \ospace \Delta \ospace (1_A)^2$} (G5.north west)
(H3.south west) edge node[descr] {$\Delta \ospace S \ospace (1_A)^2$} (F2.north east)
(G4.south west) edge node[left] {$(1_A)^2 \ospace S^2 \ospace (1_A)^2$} (F3.north east)
(J4.south) edge node[descr] {$\Delta \ospace \Delta $} (I4.north)
;
\end{tikzpicture} 
\caption{Commutation of the diagram $(A)$}
\label{DiagramA3}
 \end{figure} 
 
\begin{landscape}
\begin{figure}
\caption{The semi-direct product is a bialgebra: part 1}
    \label{sm bialgebra 1}

\begin{tikzpicture}[descr/.style={fill=white},baseline=(A.base),xscale=1.95,yscale=2] 
\node (O9) at (10.5,0) {$  (X  B)^2$};
\node (A8) at (9.3,0) {$  (X^2  B^2)^2$};
\node (C5) at (2.75,0) {$ X^2  B  X  B^2  X  B^3$};
\node (C6) at (5.5,0) {$ X^2  B  X  B  X  B^3$};
\node (C7) at (8,0) {$  X^2  B  X^2  B^4$};
\node (C8) at (9.3,3) {$  X^4 B^4$};
\node (C9) at (10.5,3) {$  X^2 B^2$};
\node (D1) at (0,4.5) {$ X  B^2  X  B$};
\node (D2) at (0,3) {$  X^2  B^3  X^2 B^2$};
\node (D3) at (0,2) {$  X^2  B^2  X^2 B^3$};
\node (D4) at (0,1) {$  X^2  (B X)^2  B^3$};
\node (D5) at (0,0) {$ X^2B X  B^2  X   B^3$};
\node (D6) at (6.5,3) {$ X^2B  X  B  X   B^4$};
\node (D8) at (8,3) {$  X^4 B^4$};
\node (E1) at (0,6) {$(XB)^2$};
\node (E2) at (2.5,6) {$  XB^2XB$};
\node (E3) at (2.5,3) {$  X^2B^4X^2B^2$};
\node (E6) at (4.5,3) {$ X^2B^2X^2B^4$};
\node (F6) at (4.5,6) {$  X^2BXB^4$};
\node (G1) at (0,7.5) {$(XB)^2$};
\node (G3) at (2.5,7.5) {$  XB^2XB$};
\node (G6) at (4.5,7.5) {$  XBXB^2$};
\node (G8) at (8,7.5) {$  X^2B^2$};
\node (G9) at (10.5,7.5) {$  XB$};
\node at (6,1.5) {$(symm)$};
\node at (6,4.5) {$\eqref{delta et act}$};
\node at (10,4.5) {$\eqref{m et delta}$};
\node at (1,4.5) {$\eqref{coass comultiplication}$};
\draw[commutative diagrams/.cd, ,font=\scriptsize]
(G1.south) edge[commutative diagrams/equal]  (E1.north)
;
\path[->,font=\scriptsize]
(G1.east) edge node[above] {$1_X \Delta 1_X 1_B $} (G3.west)
(G3.east) edge node[above] {$1_X 1_B  {\blue \sigma_{B,X}} 1_B $} (G6.west)
(G6.east) edge node[above] {$1_X \act 1_{B^2}$} (G8.west)
(E1.east) edge node[above] {$1_X \Delta 1_X 1_B $} (E2.west)
(E1.south) edge node[descr] {$1_X \Delta 1_X 1_B $} (D1.north)
(D1.south) edge node[descr] {$\Delta 1_B \Delta \Delta \Delta $} (D2.north)
(G8.east) edge node[above] {$mm$} (G9.west)
(D2.east) edge node[above] {$1_{X^2} 1_B \Delta 1_B 1_{X^2} 1_{B^2} $} (E3.west)
(E2.south) edge node[descr] {$\Delta \Delta \Delta \Delta \Delta $} (E3.north)
(G6.south) edge node[descr] {$\Delta 1_B 1_X \Delta \Delta $} (F6.north)
(F6.south) edge node[descr] {$1_{X^2} \Delta \Delta 1_{B^4} $} (E6.north)
(G8.south) edge node[descr] {$\Delta \Delta \Delta \Delta$} (D8.north)
(E3.east) edge node[above] {$1_{X^2} 1_{B^2} \sigma_{B^2,X^2} 1_{B^2} $} (E6.west)
(D8.east) edge node[above] {$1_{X}   {\blue \sigma_{X,X}} 1_{X}1_{B}   {\blue \sigma_{B,B}} 1_{B} $} (C8.west)
(C8.east) edge node[above] {$mmmm$} (C9.west)
(A8.east) edge node[above] {$mmmm$} (O9.west)
(C6.east) edge node[above] {$1_{X^2} 1_B 1_X  {\blue \sigma_{B,X}} 1_{B^3}$} (C7.west)
(C5.east) edge node[above] {$1_{X^2} 1_B 1_X 1_B \act 1_{B^3}$} (C6.west)
(G9.south) edge node[descr] {$\Delta \Delta $} (C9.north)
(C9.south) edge node[descr] {$1_X  {\blue \sigma_{X,B}} 1_B $} (O9.north)
(C7.north) edge node[descr] {$1_{X^2} \act 1_X 1_{B^4} $} (D8.south)
(C8.south) edge node[descr] {$1_{X^2} \sigma_{X^2,B^2} 1_{B^2} $} (A8.north)
(D2.south) edge node[descr] {$1_{X^2}1_{B^2} \sigma_{B,X^2}   1_{B^2} $} (D3.north)
(D5.north) edge node[descr] {$(1_{X^2} \act \act 1_{B^4})\cdot (1_{X^2}1_{B}1_X 1_B  {\blue \sigma_{B,X}}   1_{B^3)} $} (D8.south west)
(E6.east) edge node[above] {$1_{X^2}1_B  {\blue \sigma_{B,X}} 1_X 1_{B^4} $} (D6.west)
(D5.east) edge node[above] {$1_{X^2}1_B 1_X  {\blue \sigma_{B,B}} 1_X 1_{B^3} $} (C5.west)
(D6.east) edge node[above] {$1_{X^2} \act \act 1_{B^4} $} (D8.west)
(D3.south) edge node[descr] {$1_{X^2} 1_B  {\blue \sigma_{B,X}} 1_X 1_{B^3}$} (D4.north)
(D4.south) edge node[descr] {$1_{X^2} 1_B 1_X \Delta 1_X 1_{B^3}$} (D5.north)
;
\end{tikzpicture} 

\end{figure}


\begin{figure}
\caption{The semi-direct product is a bialgebra: part 2}
    \label{sm bialgebra 2}
\begin{tikzpicture}[descr/.style={fill=white},baseline=(A.base),
xscale=2.0,yscale=2] 
\node (O1) at (-0.5,0) {$ (XB)^4$};
\node (O4) at (-0.5,-2) {$  (XB)^4$};
\node (O6) at (3,-2) {$  (XB^2XB)^2$};
\node (O7) at (7,-2) {$  (XBXB^2)^2$};
\node (O8) at (8.5,-2) {$  (X^2B^2)^2$};
\node (O9) at (9.5,-2) {$  (XB)^2$};
\node (A6) at (7,0) {$  (XB)^3BXB^2$};
\node (A7) at (7,-1) {$  (XBXB^2)^2$};
\node (A8) at (9.5,-1) {$  (X^2B^2)^2$};
\node (B1) at (-0.5,4) {$ (X^2B^2)^2$};
\node (B2) at (3,4) {$  X^2B^4X^2B^2$};
\node (B3) at (5,4) {$  X^2BXB^3XB^2$};
\node (B4) at (7,3) {$  X^2BX  B^2 X B^3$};
\node (C5) at (9.5,4) {$ X^2  B X B^2 X B^3$};
\node (C6) at (9.5,3) {$ X^2 B X B X B^3$};
\node (C7) at (9.5,2) {$  X^2 B X^2 B^4$};
\node (C8) at (9.5,0) {$  X^4 B^4$};
\node (D8) at (9.5,1) {$  X^4 B^4$};
\node (D1) at (1,5) {$ X B^2 X B$};
\node (D2) at (3,5) {$  X^2 B^3 X^2 B^2$};
\node (D3) at (5,5) {$  X^2 B^2 X^2 B^3$};
\node (D4) at (7,5) {$  X^2 (B X)^2 B^3$};
\node (D5) at (9.5,5) {$ X^2 B X B^2  X   B^3$};
\node (E1) at (-0.5,5) {$(X  B)^2$};
\node at (8.5,4) {$\eqref{coco}$};
\node at (0.5,4.5) {$\eqref{coco}$};
\node at (8.5,1) {$(symm)$};
\draw[commutative diagrams/.cd, ,font=\scriptsize]
(A7.south) edge[commutative diagrams/equal]  (O7.north)
;
\draw[->] (O4.south) to[bend right=10]node[descr,scale=0.8] {$ {\blue m_{X\rtimes B}m_{X\rtimes B}}$}(O9.south);
\path[->,font=\scriptsize]
(E1.east) edge node[above] {$1_X  \Delta  1_X  1_B $} (D1.west)
(D1.east) edge node[above] {$\Delta  1_B  \Delta\Delta \Delta $} (D2.west)
(D2.east) edge node[above] {$1_{X^2} 1_{B^2} \sigma_{B,X^2} 1_{B^2}$} (D3.west)
(D3.east) edge node[above] {$1_{X^2} 1_{B}  {\blue \sigma_{B,X}} 1_X 1_{B^3} $} (D4.west)
(D4.east) edge node[above] {$1_{X^2} 1_{B} 1_X \Delta 1_X 1_{B^3} $} (D5.west)
(B1.east) edge node[above] {$1_{X^2} \Delta \Delta 1_{X^2} 1_{B^2}$} (B2.west)
(B2.east) edge node[above] {$1_{X^2} 1_B \sigma_{B^3,X} 1_{X} 1_{B^2} $} (B3.west)
(A7.east) edge node[above] {$(1_{X} \act 1_{B^2})^2 $} (A8.west)
(O4.east) edge node[above] {$(1_X  \Delta  1_X  1_B )^2 $} (O6.west)
(O6.east) edge node[above] {$ (1_X  1_B  {\blue \sigma_{B,X}} 1_B )^2 $} (O7.west)
(O7.east) edge node[above] {$ (1_{X} \act 1_{B^2})^2 $} (O8.west)
(O8.east) edge node[above] {$ (mm)^2 $} (O9.west)
(A8.south) edge node[descr] {$ (mm)^2 $} (O9.north)
(B1.south) edge node[descr] {$(1_X  {\blue \sigma_{X,B}} 1_B)^2 $} (O1.north)
(D8.south) edge node[descr,xshift=-10pt] {$1_X  {\blue \sigma_{X,X}} 1_X 1_B  {\blue \sigma_{B,B}} 1_B $} (C8.north)
(C8.south) edge node[descr] {$1_{X^2} \sigma_{X^2,B^2} 1_{B^2} $} (A8.north)
(C7.south) edge node[descr] {$1_{X^2} \act 1_X 1_{B^4} $} (D8.north)
(C6.south) edge node[descr] {$1_{X^2} 1_B 1_X  {\blue \sigma_{B,X}} 1_{B^3} $} (C7.north)
(C5.south) edge node[descr] {$1_{X^2} 1_B 1_X 1_B \act 1_{B^3} $} (C6.north)
(D5.south) edge node[descr,xshift=-5pt] {$1_{X^2} 1_B 1_X  {\blue \sigma_{B,B}} 1_X 1_{B^3} $} (C5.north)
(D4.south) edge node[descr] {$1_{X^2} 1_B 1_X \Delta 1_X 1_{B^3} $} (B4.north)
(D2.south) edge node[descr] {$1_{X^2} 1_B \Delta 1_B 1_{X^2} 1_{B^2} $} (B2.north)
(E1.south) edge node[descr] {$\Delta \Delta \Delta \Delta$} (B1.north)
(O1.south) edge node[descr] {$1_X 1_B \sigma_{XB,XB} 1_X 1_B$} (O4.north)
(B4.south) edge node[descr] {$1_X \sigma_{X,BXB} \sigma_{BXB,B} 1_B$} (A6.north)
(A6.south) edge node[descr] {$(1_{XB})^2  {\blue \sigma_{X,B}} 1_B 1_X 1_{B^2}$} (A7.north)
(B3.south) edge node[descr] {$1_{X^2} 1_B 1_X 1_{B^2}  {\blue \sigma_{B,X}} 1_{B^2}$} (B4.north west)
(B4.east) edge node[above] {$1_{X^2} 1_B 1_X 1_B \act 1_{B^3} $} (C6.west)
;
\end{tikzpicture} 
\end{figure}
\begin{figure}
\caption{The semi-direct product is associative}
    \label{sm ass}
\begin{tikzpicture}[descr/.style={fill=white},baseline=(A.base),
xscale=1.45,yscale=1.8] 
\node (A0) at (0,0) {$ X   B   X   B^2   X   B$};
\node (A4) at (8,0) {$X^2   B^2   X   B$};
\node (A7) at (14,0) {$(X   B)^2$};
\node (B4) at (8,1) {$X   B^4   X   B$};
\node (B5) at (10,1) {$X   B^4   X   B$};
\node (B7) at (14,1) {$X   B^2   X   B$};
\node (C0) at (0,2) {$X   B^2   (X   B)^2 $};
\node (C5) at (10,2) {$X   B^2   X   B^3$};
\node (C7) at (14,2) {$X   B   X   B^2 $};
\node (D1) at (2,3) {$X   B^2   X   B^2 X B$};
\node (D2) at (4,3) {$ X   B^3   X   B  X   B^2 $};
\node (D3) at (6,2) {$ X   B^2 X B   X   B^3 $};
\node (D5) at (10,3) {$X   B   X   B^2$};
\node (D7) at (14,3) {$X^2   B^2 $};
\node (E0) at (0,4) {$ (X   B)^3$};
\node (E2) at (4,4) {$ X   B^3   X   B^2   X   B$};
\node (E3) at (6,4) {$X   B   X   B^2   X   B^3$};
\node (E5) at (10,4) {$X^2   B   X   B^3$};
\node (E6) at (12,4) {$X^3   B^3 $};
\node (E7) at (14,4) {$X   B $};
\node (F1) at (2,5) {$X   B^2   X   B^2 X B$};
\node (F2) at (4,5) {$X   B   X   B^4   X   B$};
\node (F3) at (6,6) {$X   B   X   B^4   X   B$};
\node (F4) at (8,5) {$X   B^2   X^2   B^2$};
\node (F5) at (10.5,5) {$X   (B   X)^2   B^2 $};
\node (F6) at (12.5,5) {$X^3   B^2$};
\node (F7) at (14,5) {$X^2   B^2$};
\node (G0) at (0,6) {$ X   B   X   B^2   X   B$};
\node (G4) at (8,6) {$X   B   X^2   B^2 $};
\node (G7) at (14,6) {$X   B   X   B^2$};
\node (H4) at (8,7) {$X   B^2   X^2   B$};
\node (H7) at (14,7) {$X   B^2   X   B$};
\node (I0) at (0,8) {$X   B   X   B   X   B^2$};
\node (I4) at (8,8) {$X   B   X^2   B^2$};
\node (I7) at (14,8) {$(X   B)^2$};
\node at (13,4) {$(ass)$};
\node at (2,4) {$\eqref{coass comultiplication} $};
\node at (12,2.5) {$\eqref{ass action_1}$};
\node at (11,5.5) {$\eqref{ass action_2}$};
\node at (11,0.5) {$\eqref{m et delta}$};
\path[->,font=\scriptsize]
(A0.east) edge node[above] {$1_{X}  \act  1_{B^2 X B}$} (A4.west)
(A4.east) edge node[above] {$m  m 1_{XB}$} (A7.west)
(B4.east) edge node[above] {$1_{XB}  {\blue \sigma_{B,B}} 1_{BXB}  $} (B5.west)
(B5.east) edge node[above] {$1_{X} m m 1_{XB}  $} (B7.west)
(C5.east) edge node[above] {$1_{X} m 1_X m 1_{B}  $} (C7.west)
(D5.east) edge node[above] {$1_{X} \act  1_{B^2}  $} (D7.west)
(F6.east) edge node[above] {$1_{X} m  1_{B^2}  $} (F7.west)
(F5.east) edge node[above] {$1_{X} \act \act  1_{B^2}  $} (F6.west)
(F4.east) edge node[above] {$1_{XB}  {\blue \sigma_{B,X}}  1_{XB^2}  $} (F5.west)
(G4.east) edge node[above] {$1_{XB} m  1_{B^2}  $} (G7.west)
(I4.east) edge node[above] {$1_{XB} m  m  $} (I7.west)
(I0.east) edge node[above] {$1_{XBX} \act 1_{B^2} $} (I4.west)
(F2.north) edge node[descr] {$1_{XBXB}  {\blue \sigma_{B,B}} 1_{BXB} $} (F3.west)
(D2.south) edge node[descr] {$1_{XB^2} \sigma_{B,XBX} 1_{B^2} $} (D3.west)(E5.east) edge node[above] {$ 1_{X^2} \act 1_{B^3}  $} (E6.west)
(E3.east) edge node[above] {$1_X \act 1_B \act 1_{B^3} $} (E5.west)
(G0.north) edge node[descr] {$1_{XBXB}  {\blue \sigma_{B,X}} 1_B$} (I0.south)
(E0.north) edge node[descr] {$1_{XBX} \Delta 1_{XB}$} (G0.south)
(E0.south) edge node[descr] {$1_X \Delta 1_{XBXB}$} (C0.north)
(C0.south) edge node[descr] {$1_{XB}  {\blue \sigma_{B,X}} 1_{BXB} $} (A0.north)
(E0.south east) edge node[descr] {$1_X \Delta 1_{X} \Delta 1_{XB}$} (D1.north west)
(E0.north east) edge node[descr] {$1_X \Delta 1_{X} \Delta 1_{XB}$} (F1.south west)
(D1.north) edge node[descr] {$1_{XB} \Delta  1_{XB^2XB}$} (E2.south west)
(F1.south) edge node[descr] {$1_X \Delta  1_{BXB^2XB}\; \; \; \; \; \; \;$} (E2.north west)
(E2.north) edge node[descr] {$1_{XB} \sigma_{B^2,X} 1_{B^2XB}  $} (F2.south)
(F3.south) edge node[descr] {$1_{XBXB^2} \sigma_{B^2,X} 1_{B} $} (E3.north)
(E2.south) edge node[descr] {$1_{XB^3XB}  {\blue \sigma_{B,X}} 1_{B} $} (D2.north)
(D3.north) edge node[descr] {$1_{XB}  {\blue \sigma_{B,X}} 1_{BXB^3} $} (E3.south)
(B5.north) edge node[descr] {$1_{XB^2} \sigma_{B^2,X} 1_{B} $} (C5.south)
(C5.north) edge node[descr] {$1_{XB} \act m 1_{B} $} (D5.south)
(A4.north) edge node[descr] {$m \Delta \Delta 1_{XB} $} (B4.south)
(A7.north) edge node[descr] {$1_X \Delta1_{XB} $} (B7.south)
(B7.north) edge node[descr] {$1_{XB}  {\blue \sigma_{B,X}} 1_{B} $} (C7.south)
(C7.north) edge node[descr] {$1_{X} \act 1_{B^2} $} (D7.south)
(D7.north) edge node[descr] {$m m$} (E7.south)
(E6.south east) edge node[descr] {$m 1_X m 1_{B} $} (D7.north west)
(E6.north east) edge node[descr] {$ 1_X m   1_{B} m$} (F7.south west)
(I7.south) edge node[descr] {$1_{X} \Delta 1_{XB} $} (H7.north)
(H7.south) edge node[descr] {$1_{XB}  {\blue \sigma_{B,X}} 1_{B} $} (G7.north)
(G7.south) edge node[descr] {$1_{X} \act 1_{B^2} $} (F7.north)
(F7.south) edge node[descr] {$m m  $} (E7.north)
(I4.south) edge node[descr] {$1_{X} \Delta 1_{X^2} m$} (H4.north)
(H4.south) edge node[descr] {$1_{XB} \sigma_{B,X^2} 1_{B}$} (G4.north)
(G4.south) edge node[descr] {$1_{X} \Delta 1_{X^2B^2}$} (F4.north)
;
\end{tikzpicture} 

 \end{figure}
\end{landscape}
\begin{landscape}
\begin{figure}
\caption{Combination of the three diagrams $(A)$, $(B)$ and $(C)$}
    \label{combinationABC}

\begin{tikzpicture}[descr/.style={fill=white},xscale=0.75,yscale=1] 
\node (A) at (0,0) {$ A^4$};
\node (B) at (3,0) {$A^3$};
\node (C) at (6,0) {$A^2 $};
\node (D) at (8,0) {$ A$};
\node (E) at (8,3) {$A^2$};
\node (F) at (8,6) {$ A^3$};
\node (G) at (8,9) {$ A^3 $};
\node (H) at (8,12) {$ A^4$};
\node (I) at (0,12) {$A^5$};
\node (K) at (0,6) {$ A^5$};
\node (L) at (0,3) {$A^4$};
\node (A') at (-20,12) {$ A^2$};
\node (A'') at (-20,0) {$ A^2$};
\node (B') at (-14,12) {$A^4$};
\node (C') at (-7,12) {$A^5 $};
\node (J') at (-6,0) {$A^4 $};
\node (K') at (-9,0) {$ A^6$};
\node (M') at (-12,0) {$ A^6$};
\node (N') at (-15,0) {$ A^6 $};
\node (O') at (-18,0) {$ A^4$};
\node at (-9,6) {$(B)$};
\node at (4.5,6) {$(A)$};
\node at (-2,-1.5) {$(C)$};
\draw[commutative diagrams/.cd, ,font=\scriptsize]
(A'.south) edge[commutative diagrams/equal]  (A''.north);
\draw[->] (A''.south east) to[bend right=15]node[below,scale=0.8] {${\blue m} $} (D.south west);
\path[->,font=\scriptsize]
(A.east) edge node[above] {$ 1_A   m   1_A$} (B.west)
(B.east) edge node[above] {$ 1_A   m $} (C.west)
(C.east) edge node[above] {$  m $} (D.west)
(I.east) edge node[above] {$ 1_A   (e \cdot \alpha)   (\kappa \cdot \lambda)   m $} (H.west)
(H.south) edge node[descr] {$1_A   m   1_A$} (G.north)
(G.south) edge node[descr] {$ (\kappa \cdot \lambda)^2   (e \cdot \alpha)$} (F.north)
(F.south) edge node[descr] {$ m   1_A$} (E.north)
(E.south) edge node[descr] {$ m$} (D.north)
(L.south) edge node[descr] {$ (\kappa \cdot \lambda)^2   1_A   1_A$} (A.north)
(K.south) edge node[descr] {$1_A   m   1_A    1_A$} (L.north)
(I.south) edge node[descr] {$ 1_A   (e \cdot \alpha)   (\kappa \cdot \lambda)    (e \cdot \alpha)^2$} (K.north)
(A'.east) edge node[above] {$ \Delta   \Delta  \; \; \; \; \; \; \; \;$} (B'.west)
(B'.east) edge node[above] {$ 1_A   \Delta   1_A   1_A $} (C'.west)
(C'.east) edge node[above] {$ 1_A   1_A    {\blue \sigma_{A,A}}   1_A $} (I.west)
(K'.east) edge node[below] {$ 1_A   m^2   1_A $} (J'.west)
(J'.east) edge node[above] {$ 1_A  (\kappa \cdot \lambda)    (e \cdot \alpha)   1_A $} (A.west)
(M'.east) edge node[above] {$  ((\kappa \cdot \lambda)    (e \cdot \alpha))^3  $} (K'.west)
(A''.east) edge node[above] {$ \Delta^2$} (O'.west)
(O'.east) edge node[above] {$  1_A   \Delta^2   1_A$} (N'.west)
(N'.east) edge node[below] {$1_{A^2}   {\blue \sigma_{A,A}}   1_{A^2}$} (M'.west)
;
\end{tikzpicture} 

 \end{figure}
\end{landscape}


\begin{thebibliography}{99}

\bibitem{AM} H.~Albuquerque, S{\blue .}~Majid, \newblock
Quasialgebra Structure of the Octonions, \newblock \emph{J. Algebra}, Vol. 220 (1), 188-224, 1999. 

\bibitem{A} N.~Andruskiewitsch, \newblock Notes on extensions of Hopf algebras, \newblock {\em Canad. J. Math.} Vol. 48 (1), 3-42, 1996.

\bibitem{AD} N.~Andruskiewitsch, J.~Devoto, \newblock Extensions of Hopf algebras, \newblock {\em Algebra i Analiz.} Vol. 7 (1), 22–61, 1995.

\bibitem{B} G.~B\"ohm, \newblock Crossed modules of monoids II, \newblock \newblock {\em Appl. Categ. Struct.}, Vol. 28, 601-653, 2020.

 \bibitem{BJ} D.~Bourn, G.~Janelidze, \newblock Protomodularity, descent, and semidirect products, \newblock \emph{Theory Appl.
Categ.} Vol. 4 (2), 37-46, 1998.

\bibitem{BJK} F.~Borceux, G.~Janelidze, G.M.~Kelly, \newblock Internal object actions, \newblock \emph{Comment. Math. Univ. Carol.} Vol. 46 (2), 235-255, 2005.

\bibitem{BMS} D.~Bourn, N.~Martins-Ferreira, A.~Montoli, M.~Sobral, \newblock Schreier split epimorphisms in monoids and semirings, \newblock \emph{Textos de Matem{\blue \`a}tica Série B, Departamento de Matem{\blue \`a}tica da Universidade de Coimbra}, Vol. 45, 2014.

\bibitem{CG}  S.~Caenepeel, I.~Goyvaerts, \newblock Monoidal Hom–Hopf Algebras, \newblock \emph{Communications in Algebra}, Vol. 39 (6), 2216-2240, 2011.

\bibitem{CL} S.~Caenepeel,  M.~De~Lombaerde, \newblock A Categorical Approach to Turaev's Hopf Group-Coalgebras, \newblock \emph{Communications in Algebra}, Vol. 34 (7), 2631-2657, 2006.

\bibitem{Cayley} A.~Cayley, \newblock On Jacobi's Elliptic functions, in reply to the Rev. Brice Bronwin; and on Quaternions, \newblock \emph{The London, Edinburgh, and Dublin Philosophical Magazine and Journal of Science}, Vol. 26 (3), 208-211, 1845.


\bibitem{Vilaboa}  J.M. Fern\'andez Vilaboa, M.P. L\'opez L\'opez and E. Villanueva Novoa, \newblock Cat$^1$-Hopf Algebras and Crossed Modules, \newblock \emph{Communications in Algebra} Vol. 35 (1), 181-191, 2006.

\bibitem{GJS} M.~Gran, G.~Janelidze, M.~Sobral, \newblock  Split extensions and semidirect products of unitary magmas, \emph{Comment. Math. Univ. Carolin.} Vol. 60 (4), 509–527, 2019.

\bibitem{GKV}
M.~Gran, G.~Kadjo, J.~Vercruysse,
\newblock A torsion theory in the category of cocommutative Hopf algebras, \newblock {\em Appl. Categ. Struct.}, Vol. 24, 269-282, 2016.

\bibitem{GSV} M.~Gran, F.~Sterck, J.~Vercruysse, \newblock A semi-abelian extension of a theorem by Takeuchi, \newblock \emph{J. Pure Appl. Algebra},  Vol. 223 (10), 4171-4190, 2019.

\bibitem{Grave} J.~Grave, \newblock On a connection between the general theory of normal couples and the theory of complete quadratic functions of two variables, \newblock {\em The London, Edinburgh, and Dublin Philosophical Magazine and Journal of Science}, Vol. 26 (3), 315-320 1845.

\bibitem{JMT} G.~Janelidze, L.~M\'arki, W.~Tholen,
 \newblock Semi-abelian categories,
 \newblock  {\em J. Pure Appl. Algebra} Vol. 168, 367-386, 2002.
 
\bibitem{MacLane} S.~Mac Lane, 
 \newblock Categories for the Working Mathematician,
 \newblock  {\em Springer}, 1978.

\bibitem{Majid} S.~Majid, \newblock Strict quantum $2$-groups, preprint, arXiv:1208.6265, 2012.

\bibitem{Majid3} S.~Majid, \newblock Foundations of Quantum Group Theory, \newblock {\em Cambridge Univ. Press}, 1995.

\bibitem{Majid2} S.~Majid, \newblock Physics for algebraists: noncommutative and noncocommutative Hopf algebras by a bicrossproduct construction, \newblock \emph{J. Algebra} Vol 130, 17-64, 1990.

\bibitem{MS} A.~Makhlouf, S.D.~Silvestrov, \newblock Hom–Lie admissible Hom-coalgebras and Hom–Hopf algebras, \newblock \emph{Generalized Lie Theory in Mathematics, Physics and Beyond. Berlin: Springer}, Chap. 17, 189–206, 2009.

 \bibitem{Molnar} R.~Molnar, \newblock Semi-direct products of Hopf algebras, \newblock \emph{J. Algebra} Vol. 47 , 29-51, 1977.
 
\bibitem{Patchkoria} A.~Patchkoria, \newblock Crossed semimodules and Schreier internal categories in the category of monoids, \newblock \emph{Georgian Math. Journal} Vol. 5 (6), 575-581 1998.

\bibitem{Tuarev} V.G. Turaev, \newblock Homotopy field theory in dimension 3 and crossed group-categories, \newblock  preprint arXiv:math. GT/0005291, 2000.

 \bibitem{VW} C. Vespa and M. Wambst, \newblock On some properties of the category of cocommutative Hopf algebras, \newblock  \emph{North-Western European J. Mathematics}, 21-37, 2018.


\end{thebibliography}
\end{document}